\RequirePackage{plautopatch}
\documentclass{amsart}
\usepackage{mathtools}
\usepackage{amsmath}
\usepackage{amsthm}
\usepackage{autobreak}
\usepackage{cleveref}

\theoremstyle{plain}
\newtheorem{theorem}{Theorem}[section]
\newtheorem{proposition}[theorem]{Proposition}
\newtheorem{corollary}[theorem]{Corollary}
\newtheorem{lemma}[theorem]{Lemma}
\newtheorem*{theorem*}{Theorem}

\theoremstyle{definition}
\newtheorem{definition}[theorem]{Definition}

\theoremstyle{remark}
\newtheorem{remark}[theorem]{Remark}
\newtheorem*{remark*}{Remark}
\newtheorem*{acknowledgment}{Acknowledgment}

\newcommand{\R}{\mathbb{R}}
\newcommand{\N}{\mathbb{N}}
\newcommand{\D}{\mathcal{D}}
\newcommand{\X}{\mathcal{X}}
\newcommand{\F}{\mathcal{F}}
\newcommand{\K}{\mathcal{K}}
\renewcommand{\L}{\mathcal{L}}
\DeclareMathOperator{\lip1}{Lip_1}
\DeclareMathOperator{\pr}{pr}
\DeclareMathOperator{\id}{id}
\newcommand{\kf}{d_{\operatorname{KF}}}
\DeclareMathOperator{\pd}{PartDiam}
\DeclareMathOperator{\od}{ObsDiam}
\newcommand{\dpr}{d_{\operatorname{P}}}
\DeclareMathOperator{\diam}{diam}
\DeclareMathOperator{\supp}{supp}
\DeclareMathOperator{\conc}{conc}
\DeclareMathOperator{\dis}{dis}
\DeclareMathOperator{\bdd}{Bdd}
\newcommand{\dinf}[1]{{d^{#1}_\infty}}
\newcommand{\dconc}{d_{\operatorname{conc}}}
\newcommand{\hausp}[1]{{\left(#1\right)_{\operatorname{H}}}}
\newcommand{\hauspdinf}[1]{{\hausp{\dinf{#1}}}}
\newcommand{\mmid}{\mathrel{}\middle|\mathrel{}}
\renewcommand{\epsilon}{\varepsilon}
\renewcommand{\phi}{\varphi}

\title{Geometry of geometric data set I}
\author{Shigeaki Yokota}
\date{\today}
\subjclass[2010]{Primary 53C23}
\keywords{metric measure space, geometric data set, observable distance, box distance}
\thanks{The author was supported by JSPS KAKENHI Grant Number 22H04942}

\begin{document}
\begin{abstract}
    Hanika, Schneider, and Stumme introduced geometric data set as a generalization of metric measure space for the computation of the observable diameter, and extended the observable distance between metric measure spaces to that between geometric data sets. In this paper, we begin by proving the non-separability of the observable distance between geometric data sets. We then extend the box distance between mm-spaces to that between geometric data sets and prove its completeness and non-separability.
\end{abstract}
\maketitle

\section{Introduction}
Gromov \cite{gromov2007met} developed the geometry of mm-spaces, based on the concentration of measure phenomenon and theory of collapsing manifold. A triplet $(X, d_X, \mu_X)$, or simply $X$, is an \emph{mm-space} if $d_X$ is a complete separable metric on $X$, and $\mu_X$ is a Borel probability measure on the metric space $(X, d_X)$. He defined the observable distance $\dconc$ based on measure concentration and the box distance $\Box$ based on collapsing theory on the set of all mm-spaces, say $\X$, and constructed a natural compactification of $(\X, \dconc)$. Various properties of these distances and spaces are known, with particular attention given to the separability of $(\X, \dconc)$ and the completeness and separability of $(\X, \Box)$.

Pestov \cite{pestov2008gds} treated data as mm-spaces and used the observable diameter (see Definition~\ref{def:obsdiam}) to explain a form of the curse of dimensionality through the concentration of measure phenomenon. However, efficient computation of the observable diameter of an mm-space $X$ is not yet known, and straightforward computation over the set of all 1-Lipschitz continuous functions, say $\lip1(X)$, in the definition takes exponential time with respect to the number $\#X$ of set of data sample $X$. He proposed restricting the function family to compute the observable diameter.

Following this, Hanika, Schneider, and Stumme \cite{hanika2022gds} defined the geometric data set as a generalization of mm-spaces to compute the curse of dimensionality through the observable diameter as follows: A triplet $(X, F_X, \mu_X)$, or simply $X$, is defined to be a \emph{geometric data set} if $F_X$ is a set of real valued functions on $X$ such that
\begin{equation*}
    d_X(x, y) \coloneqq \sup_{f \in F_X} |f(x) - f(y)|,\ x,y\in X,
\end{equation*}
is a complete separable metric, and if $\mu_X$ is a Borel probability measure on the metric space $(X, d_X)$. An mm-space can be interpreted as a special case where $F_X = \lip1(X, d_X)$. They extended the observable distance between mm-spaces to geometric data sets and similarly generalized the observable diameter to geometric data sets. They then confirmed the characteristics of the observable diameter of geometric data sets as a statistical measure.

Hanika et al.'s paper aims to utilize the observable diameter as a statistical measure and does not generally prove the properties of the observable distance. While the separability of $(\X, \dconc)$ is known, the topological properties of $(\D, \dconc)$ are entirely unknown. Similarly, although the completeness and separability of $(\X, \Box)$ are established, the box distance on $\D$ has yet to be defined. \emph{In this paper, we prove the non-separability of $(\D, \dconc)$ and further extend the box distance to $\D$, showing its completeness and non-separability.}

We state a theorem about the non-separability of $(\D, \dconc)$. For a set $A \subset \R$, define a \emph{single-point geometric data set} $\ast_A$ as
\begin{equation*}
    \ast_A \coloneqq \{\ast\}, \quad F_{\ast_A} = \{\ast \mapsto x \mid x \in A\}, \quad \mu_{\ast_A} = \delta_{\ast}.
\end{equation*}
Then, we have the following.

\begin{theorem}\label{thm:NonSeparability}
    The set $\{\ast_A \mid A \subset \N\}$ is an uncountable discrete subset of $(\D, \dconc)$. In particular, $(\D, \dconc)$ is non-separable.
\end{theorem}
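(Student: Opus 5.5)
The plan is to compute $\dconc(\ast_A, \ast_B)$ explicitly for $A, B \subset \N$ and show that it equals $1$ whenever $A \neq B$. I use the definition of the observable distance between geometric data sets from \cite{hanika2022gds}:
\[
\dconc(X,Y)=\inf_{\phi,\psi} \hausp{\kf}\bigl(\phi^*F_X,\ \psi^*F_Y\bigr),
\]
where the infimum runs over parametrizations $\phi\colon I\to X$, $\psi\colon I\to Y$ of the Lebesgue space $I=[0,1)$. Here $\phi^*F_X = \{f\circ\phi \mid f\in F_X\}$, and $\hausp{\kf}$ is the Hausdorff distance with respect to the Ky Fan metric $\kf$ on measurable functions on $I$.

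First I check that each $\ast_A$ is a geometric data set. The space is a single point, so $d_{\ast_A}\equiv 0$ is trivially a complete separable metric. This holds even for $A=\emptyset$, where the supremum over the empty family is read as $0$.

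Next I describe the parametrizations. For a single-point space the only parametrization is the constant map $I\to\{\ast\}$. Hence $\phi^*F_{\ast_A}$ is exactly the set of constant functions $\{c_a \mid a\in A\}$ on $I$, where $c_a\equiv a$, and the infimum in the definition is attained trivially.

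The key computation is the Ky Fan distance between constants. Recall
\[
\kf(f,g)=\inf\{\epsilon\ge 0 \mid \L(\{|f-g|>\epsilon\})\le\epsilon\}.
\]
This gives $\kf(c_a,c_b)=\min\{|a-b|,1\}$: if $\epsilon<|a-b|$ the exceptional set is all of $I$ and has measure $1$, so we need $\epsilon \ge 1$. For distinct integers this equals $1$, and in general $\kf\le 1$ always. Now let $A\neq B$, and say $n\in A\setminus B$. Every $b\in B$ satisfies $\kf(c_n,c_b)=1$, so the one-sided distance from $c_n$ to $\phi^*F_{\ast_B}$ is $1$ and the Hausdorff distance is at least $1$. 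On the other hand, every pairwise distance is at most $1$, so $\dconc(\ast_A,\ast_B)=1$.

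The edge case $B=\emptyset$, with $A\neq\emptyset$, needs a convention for the Hausdorff distance to the empty set. I expect this to be the only delicate point. Under the usual convention the distance is $\infty$ (or is truncated to $1$), and in either case it is $\ge 1$, which is all we need.

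To conclude, the map $A\mapsto\ast_A$ is injective on $2^{\N}$, which is uncountable, and distinct points are at distance at least $1$. So the open balls of radius $1/2$ around these points are pairwise disjoint, and the family is an uncountable discrete subset. Any dense subset of $(\D,\dconc)$ must meet each of these uncountably many disjoint open balls, so it cannot be countable. Therefore $(\D,\dconc)$ is non-separable.
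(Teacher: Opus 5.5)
Your proof is correct and follows the paper's argument (the remark in Section 4): the only parameter of $\ast_A$ is the constant map, so $\dconc(\ast_A,\ast_B)$ reduces to the $\kf^\lambda$-Hausdorff distance between sets of constant functions, which equals $\min\{\hausp{d_\R}(A,B),1\}=1$ for $A\neq B$. Your $\min$ is the right formula; the paper's displayed $\max$ is a typo.

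The only quibble is $A=\emptyset$. The paper requires feature sets to be non-empty, so $\ast_\emptyset$ is simply not a geometric data set. The cleanest fix is to drop it rather than adopt conventions for the empty supremum and for the Hausdorff distance to $\emptyset$, and this costs nothing since $2^\N\setminus\{\emptyset\}$ is still uncountable.
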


This is in contrast to the separability of $(\X, \dconc)$.

Next, we extend the box distance $\Box$ to that between geometric data sets. The box distance between mm-spaces $X$ and $Y$ is defined using only distance and measure. Nakajima showed the following representation \cite{nakajima2022coupling}:
\begin{equation*}
    \Box(X, Y) = \inf_{\pi, S} \max\{1 - \pi(S), \dis S\},
\end{equation*}
where $\pi$ runs over all couplings (Definition~\ref{def:Coupling}) between $\mu_X$ and $\mu_Y$, and $S$ runs over all closed sets in $X \times Y$. The \emph{distortion} `$\dis S$' of $S$ is defined as
\begin{equation*}
    \dis S \coloneqq \sup\{|d_X(x, x') - d_Y(y, y')|; (x, y), (x', y') \in S\}.
\end{equation*}

To extend the box distance $\Box$ to $\D$, the value $\dis S$ with respect to the distances $d_X$ and $d_Y$ needs to be replaced by the values with respect to the function families $F_X$ and $F_Y$. Introducing an extended pseudo-distance for real-valued functions $f$ and $g$ on $S$,
\begin{equation*}
    \dinf{S}(f, g) \coloneqq \sup_{x \in S} |f(x) - g(x)|,
\end{equation*}
and denoting its Hausdorff distance by $\hauspdinf{S}$, we show the following lemma:
\begin{lemma}\label{lemma:2Lip}
    We have
    \begin{equation*}
        \dis S = 2\hauspdinf{S}(\lip1(X) \circ \pr_1, \lip1(Y) \circ \pr_2).
    \end{equation*}
\end{lemma}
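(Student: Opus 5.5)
We prove the two inequalities separately. Throughout, write $\Phi = \lip1(X)\circ\pr_1$ and $\Psi = \lip1(Y)\circ\pr_2$, regarded as families of functions on $S$, and set $h \coloneqq \hauspdinf{S}(\Phi,\Psi)$. So $h$ is the infimum of $r$ such that every $f\circ\pr_1$ lies within $\dinf{S}$-distance $r$ of some $g\circ\pr_2$, and conversely. We may assume $S\neq\emptyset$.

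\emph{Upper bound $2h \le \dis S$.} Put $\delta = \dis S$, which we may take finite. Given $f \in \lip1(X)$, we need $g\in\lip1(Y)$ with $|f(x)-g(y)|\le \delta/2$ for all $(x,y)\in S$. First define $g_0$ on the projection $\pr_2(S)$ by
\begin{equation*}
g_0(y) \coloneqq \inf_{(x',y')\in S}\bigl(f(x') + d_Y(y,y') + \delta/2\bigr).
\end{equation*}
This is a McShane-type infimum of $1$-Lipschitz functions of $y$, so it is $1$-Lipschitz on all of $Y$. It is finite because $f(x')+d_Y(y,y') \ge f(x_0) - d_X(x_0,x') + d_Y(y,y')$ for a fixed $(x_0,y_0)\in S$, and $d_Y(y,y')\ge d_Y(y_0,y') - d_Y(y,y_0)$ while $|d_X(x_0,x')-d_Y(y_0,y')|\le\delta$.

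Now take $(x,y)\in S$. Choosing $(x',y')=(x,y)$ gives $g_0(y)\le f(x)+\delta/2$. For an arbitrary $(x',y')\in S$ we have
\begin{equation*}
f(x') + d_Y(y,y') \ge f(x) - d_X(x,x') + d_Y(y,y') \ge f(x)-\delta,
\end{equation*}
so $g_0(y)\ge f(x)-\delta/2$. Hence $|f(x)-g_0(y)|\le\delta/2$ on $S$. By symmetry the same argument works starting from $\lip1(Y)$, and therefore $h\le\delta/2$.

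\emph{Lower bound $\dis S \le 2h$.} Take $(x,y),(x',y')\in S$ and $\epsilon>0$. Use $f = d_X(x,\cdot)\in\lip1(X)$, and pick $g\in\lip1(Y)$ with $\sup_S|f\circ\pr_1-g\circ\pr_2|\le h+\epsilon$. Then
\begin{equation*}
d_X(x,x') = f(x')-f(x) \le g(y')-g(y) + 2(h+\epsilon) \le d_Y(y,y') + 2h+2\epsilon.
\end{equation*}
Running the same argument from the $Y$ side with $g = d_Y(y,\cdot)$ bounds $d_Y(y,y')-d_X(x,x')$ in the same way. Letting $\epsilon\to0$ gives $\dis S\le 2h$. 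The infinite cases follow from the same inequalities, read as statements in $[0,\infty]$.

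The main obstacle is the upper bound. There one has to pick the right extension, including the shift by $\delta/2$, and check that it is finite and realizes the factor $1/2$. The lower bound, by contrast, is routine.
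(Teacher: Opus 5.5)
Your proof is correct and follows essentially the same route as the paper. The bound $2\hauspdinf{S}(\cdot,\cdot)\le\dis S$ is obtained there with the same shifted McShane-type function $g(y)=\frac12\dis S+\inf_{(x',y')\in S}\bigl(f(x')+d_Y(y,y')\bigr)$, and the reverse bound is the triangle-inequality argument of Proposition~\ref{prop:BoxDistanceLessThanDis}, which you streamline by taking $f=d_X(x,\cdot)$ exactly; your explicit check that $g_0$ is finite fills in a detail the paper leaves implicit.
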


Here, $\pr_n$ denotes the projection from the product to the $n$-th component for $n=1,2$, and $F \circ f$ represents the set of all compositions of each element in the function family $F$ with $f$. Using this lemma, we extend the box distance to $\D$ as follows:
\begin{definition}
    We define the box distance between two geometric data sets $X$ and $Y$ as
    \begin{equation*}
        \Box(X, Y) \coloneqq \inf_{\pi, S} \max\{1 - \pi(S), 2\hauspdinf{S}(F_X \circ \pr_1, F_Y \circ \pr_2)\},
    \end{equation*}
    where $\pi$ runs over all couplings between $\mu_X$ and $\mu_Y$, and $S$ runs over all closed sets in $X \times Y$.
\end{definition}

The box distance defined here satisfies the axioms of a distance. Lemma~\ref{lemma:2Lip} implies that the box distance defined here on $\D$ is a generalization of the conventional box distance. It is known that the inequality $\dconc \leq \Box$ holds between the conventional observable distance and the box distance, and this inequality is easily proved also for geometric data sets.

Furthermore, the box distance on $\D$ satisfies the following property.
\begin{theorem}
    $(\D, \Box)$ is complete but not separable.
\end{theorem}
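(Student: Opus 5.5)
Non-separability is the easy half, and I would do it first. It follows from Theorem~\ref{thm:NonSeparability} together with the inequality $\dconc \le \Box$ on $\D$ noted above. If $\{\ast_A \mid A\subset\N\}$ is uncountable and discrete in $\dconc$, it is also uncountable and discrete in $\Box$. Indeed, a uniform positive lower bound $\dconc(\ast_A,\ast_B)\ge c$ for $A\ne B$ transfers to $\Box$. If only discreteness (without a uniform bound) is available, the argument still works: for each $A$ pick $r_A>0$ with $B_{\dconc}(\ast_A,r_A)$ containing no other $\ast_B$; then $B_\Box(\ast_A,r_A)\subset B_{\dconc}(\ast_A,r_A)$, so the set is discrete in $\Box$. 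An uncountable discrete subset of a metric space rules out separability. As a sanity check, one can also compute $\Box(\ast_A,\ast_B)$ directly: $S=\{(\ast,\ast)\}$ and $2(\dinf{S})_H(A,B)=2d_H(A,B)\ge 2\cdot 1\wedge$, so for $A\ne B\subset\N$ we get $\Box\ge\min\{1,2\}$ up to truncation. Both routes give uniform separation.

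For completeness I would follow the standard proof for $(\X,\Box)$. Take a Cauchy sequence and pass to a subsequence with $\Box(X_n,X_{n+1})<2^{-n}$. Choose couplings $\pi_n$ and closed $S_n\subset X_n\times X_{n+1}$ with $1-\pi_n(S_n)<2^{-n}$ and $(\dinf{S_n})_H(F_{X_n}\circ\pr_1,F_{X_{n+1}}\circ\pr_2)<2^{-n}$. Glue the couplings (gluing lemma on Polish spaces) to get a probability measure $\pi$ on $\prod_n X_n$ with the given consecutive marginals. Let $\Omega_N=\{(x_n): (x_n,x_{n+1})\in S_n \ \forall n\ge N\}$, so $\pi(\Omega_N)\ge 1-2^{-N+1}$. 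For $\omega\in\Omega_N$ and $f_N\in F_{X_N}$, the Hausdorff conditions yield a chain $f_N,f_{N+1},\dots$ with $f_{n+1}\in F_{X_{n+1}}$ and $|f_n(x_n)-f_{n+1}(x_{n+1})|\le 2^{-n}$ along every point of $\Omega_N$ (uniformly, by the sup over $S_n$). Hence $f_n(x_n)$ converges uniformly on $\Omega_N$.

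Define the limit $Y$ as follows. On $\Omega=\bigcup_N\Omega_N$, take the family $F$ of all limits $\omega\mapsto\lim_n f_n(\omega_n)$ of such admissible chains (chains eventually satisfying the $2^{-n}$ compatibility). Let $d_F(\omega,\omega')=\sup_{g\in F}|g(\omega)-g(\omega')|$, quotient by $d_F=0$, and complete; each $g$ is $1$-Lipschitz for $d_F$ and so extends. Push $\pi$ forward to get $\mu_Y$.

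The main obstacle is ensuring $Y$ is a genuine geometric data set. Separability of $d_F$ should come from the sets $\Omega_N$: the map $\omega\mapsto\omega_N$ is, up to error $2^{-N+2}$, an isometry-like relation from $\Omega_N$ into the separable space $X_N$, which gives total boundedness up to arbitrarily small error on each piece. Measurability of $\pi$ with respect to $d_F$ needs the same approximation. Finiteness of $d_F$ requires care; I would allow the extended metric and truncate, or verify that the paper's definition permits it. Completeness of the metric induced by the completed family also needs checking: one has to confirm that the extended functions still realise the completed metric, which holds since the sup of $1$-Lipschitz extensions is continuous.

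Finally, estimate $\Box(X_N,Y)$. Use the coupling $(\pr_N,\text{quotient})_*\pi$ and the closure of $\{(\omega_N,[\omega]):\omega\in\Omega_N\}$. The measure bound is $2^{-N+1}$. Every $f_N$ is within $2^{-N+1}$ of its chain limit, and conversely every $g\in F$ arises from a chain whose $N$-th term is within $2^{-N+1}$. This gives $\Box(X_N,Y)\le 2^{-N+2}\to0$.
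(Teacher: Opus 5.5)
Your non-separability argument is the paper's: you transfer Theorem~\ref{thm:NonSeparability} through $\dconc\le\Box$. Directly, $\Box(\ast_A,\ast_B)=1$ for $A\neq B$: the only coupling is $\delta_{(\ast,\ast)}$, $S=\emptyset$ gives $1$, and $S=\{(\ast,\ast)\}$ gives $2\hausp{d_\R}(A,B)\ge 2$.

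For completeness your proposal is correct in outline but takes a genuinely different route. The paper (Lemma~\ref{lem:GreaterBox}) builds one space $X_\infty$ of sequences with the sup metric and all $1$-Lipschitz features. It uses exact minimizers from Theorem~\ref{thm:BoxIsMin}, so that every $X_n$ is dominated by $X_\infty$ via $\phi_n$. It then observes that $F_{X_n}\circ\phi_n$ is Cauchy for the Hausdorff distance induced by $\kf^{X_\infty}$, takes the limit $F_\infty$ in that hyperspace, and defines the limit as the quotient $X_\infty/F_\infty$ of Proposition~\ref{thm:QuotientSpaceExist}. Because $F_\infty$ is only a limit in measure, the final estimate must be transported back to uniform control on a compact set through Lemmas~\ref{lem:dInftyUniformlyContinuous} and~\ref{lem:embeddedBox}.

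You instead build the limit features explicitly as chain limits that converge \emph{uniformly} on $\Omega_N$. This is exactly the shape of the $\Box$-functional, a supremum over a set of large measure. Your route therefore avoids minimizers, hyperspace completeness in $(\mathrm{Lip}_1,\kf)$, and any passage between convergence in measure and uniform convergence. It also gives the explicit rate $\Box(X_N,Y)\le 2^{-N+2}$ and the explicit limit metric $d_F(\omega,\omega')=\lim_n d_{X_n}(\omega_n,\omega'_n)$. The price is that you must verify the geometric-data-set axioms for your ad hoc $Y$ by hand, whereas the paper gets them for the limit from the general quotient construction.

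Those verifications are where your sketch needs tightening.

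(i) A chain that is compatible only from some $M>N$ must be extended backwards to index $N$, using the other half of the Hausdorff bound on $S_{M-1},\dots,S_N$. Your converse estimate silently uses this.

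(ii) With (i), for $\omega,\omega'\in\Omega_N$ you get $|d_F(\omega,\omega')-d_{X_N}(\omega_N,\omega'_N)|\le 2^{-N+2}$, so $d_F$ is finite. Drop the truncation fallback: it is not admissible, since the metric must be the one induced by the features.

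(iii) What you need is separability, not total boundedness. Given $\epsilon$, use $\Omega_N\subset\Omega_M$ and the near-isometry of $\Omega_M$ into $X_M$ for $M$ with $2^{-M+2}<\epsilon$. Borel measurability of the quotient map $q$ then follows, because $\omega\mapsto d_F(\omega,\omega')$ is a pointwise limit of continuous functions on the Borel set $\Omega$.

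(iv) $q_*\pi$ need not have full support on the completion, as an mm-space requires. Take $X_n=\{p,q\}$ with $d(p,q)=1$, all $1$-Lipschitz features, $\mu_{X_n}(q)=4^{-n}$, $S_n=\{(p,p),(q,q)\}$, and couplings with $1-\pi_n(S_n)=3\cdot 4^{-n-1}$. Then $(q,q,\dots)\in\Omega_1$, but its $d_F$-ball of radius $1/2$ is $\pi$-null. The fix is to replace $Y$ by $\supp q_*\pi$ with restricted features, and to intersect your closed set with $X_N\times\supp q_*\pi$; all your estimates survive. The paper's $X_\infty$ needs the same restriction.
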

The non-separability of $(\D, \Box)$ is shown by $\dconc \leq \Box$ and Theorem~\ref{thm:NonSeparability}. In particular, this contrasts with the completeness and separability of $(\X, \Box)$.

This paper is organized as follows. In Chapter 2, we introduce basic definitions, such as mm-space and coupling. In Chapter 3, we define geometric data set, describe basic geometric data sets such as the single-point geometric data set, and discuss the (semi)continuity of the observable diameter of geometric data sets. Next, we refer to the definition of the partial order relation between geometric data sets called feature order, and construct the quotient geometric data set, which is the dominated geometric data set, from the subset of features. The quotient geometric data set is used to prove the completeness of the box distance. In Chapter 4, we confirm the non-separability of the observable distance between geometric data sets and apply similar techniques to Nakajima's characterization \cite{nakajima2022coupling} of the observable distance between mm-spaces for that between geometric data sets. In Chapter 5, we extend the box distance to geometric data sets and show that this extension retains completeness but not separability.

In the next paper \cite{gds2}, we will consider how much the techniques on $\X$ can be applied to the observable distance and box distance on $\D$. Concerning $\X$, the natural compactification of $(\X, \dconc)$ was constructed by Gromov \cite{gromov2007met}, and Shioya \cite{shioya2016mmg} showed that this compactification is metrizable. The obvious obstacle in applying the same method to geometric data sets is the non-separability of $(\D, \dconc)$ and $(\D, \Box)$. Here, by considering a suitable subset of $\D$, it is possible to keep $\dconc$ and $\Box$ separable. This subset is the set of all elements $X$ of $\D$ with a function family $F_X$ that is appropriately ``closed" with respect to a suitable function family $\L \subset \lip1(\R)$, so let us denote it by $\D_\L$. It can be shown that $(\D_\L, \dconc)$ and $(\D_\L, \Box)$ are separable. Furthermore, by using the same techniques as the compactification of $(\X, \dconc)$, a natural compactification of $(\D_\L, \dconc)$ can be constructed and shown to be metrizable.

\begin{acknowledgment}
    The author would like to thank Professor Takashi Shioya for many helpful suggestion and guidance.
\end{acknowledgment}

\section{Metric Measure Space and Coupling}
We denote by $\pr_n$ for $n=1,2,\ldots$ the projection to the $n$-th coordinate. We also set \begin{align*}
    \pr_{n_1,n_2,\ldots,n_m} & \coloneqq (\pr_{n_1},\pr_{n_2}\ldots,\pr_{n_m}) \\
    \pr_{\leq m}             & \coloneqq \pr_{1,2,\ldots,m}
\end{align*} for $m=1,2,\ldots$ and $n_1,n_2,\ldots,n_m=1,2,\ldots$~.

Let be $X$ a topological space. We denote by $\F(X)$ the set of closed subsets of $X$ and by $\K(X)$ the set of compact subsets of $X$.

\begin{definition}[Push-forward] Let $(X,\mu)$ be a measure space and $Y$ a measurable space. For a measurable map $f\colon X\to Y$, we define \begin{equation*}
        f_*\mu(A) \coloneqq \mu(f^{-1}(A)) \ \textrm{for any measurable subset}\ A\subset Y.
    \end{equation*} We call $f_*\mu$ \emph{the push-forward of $\mu$ by $f$}.
\end{definition}

\begin{definition}[Support] Let $X$ be a topological space and $\mu$ a measure on $X$. We define the \emph{support $\supp \mu$ of $\mu$} by \begin{equation*}
        \supp \mu \coloneqq \bigl\{x\in X\mid \mu(U) > 0 \ \textrm{for all open neighbourhood $U$ of $x$}\bigr\}.
    \end{equation*}
\end{definition}

\begin{lemma}[{\cite[Proposition 2.3]{nakajima2022coupling}}]\label{thm:suppImage} Let $X$ and $Y$ be two topological spaces and let $f\colon X\to Y$ be a continuous map. If a Borel measure $\mu$ on $X$ satisfies \begin{equation*}
        \mu(X\setminus\supp \mu) = 0,
    \end{equation*} then we have \begin{equation*}
        \supp f_*\mu = \overline{f(\supp \mu)}.
    \end{equation*}
\end{lemma}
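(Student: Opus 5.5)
We prove the two inclusions $\supp f_*\mu \subset \overline{f(\supp\mu)}$ and $\overline{f(\supp\mu)} \subset \supp f_*\mu$ separately. Recall that a support is always closed, since its complement is a union of open sets of measure zero.

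For the inclusion $\overline{f(\supp\mu)} \subset \supp f_*\mu$, it suffices, by closedness of $\supp f_*\mu$, to show $f(\supp\mu) \subset \supp f_*\mu$. Let $x \in \supp\mu$ and let $V$ be an open neighbourhood of $f(x)$. By continuity, $f^{-1}(V)$ is an open neighbourhood of $x$. Hence
\[
f_*\mu(V) = \mu(f^{-1}(V)) > 0 ,
\]
so $f(x) \in \supp f_*\mu$. This direction does not use the hypothesis on $\mu$.

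For the reverse inclusion, take $y \notin \overline{f(\supp\mu)}$. Then $V \coloneqq Y \setminus \overline{f(\supp\mu)}$ is an open neighbourhood of $y$. Its preimage $f^{-1}(V)$ is disjoint from $\supp\mu$, i.e.\ $f^{-1}(V) \subset X \setminus \supp\mu$. The hypothesis $\mu(X\setminus\supp\mu)=0$ then gives
\[
f_*\mu(V) = \mu(f^{-1}(V)) \le \mu(X\setminus \supp\mu) = 0 ,
\]
so $y \notin \supp f_*\mu$. This proves $\supp f_*\mu \subset \overline{f(\supp\mu)}$.

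The only real subtlety is this second direction: in general topological spaces the complement of the support need not be null, because it is an uncountable union of null open sets. That is exactly why the hypothesis $\mu(X\setminus\supp\mu)=0$ is imposed. Using it directly, as above, avoids any need for second countability or Lindel\"of-type arguments.
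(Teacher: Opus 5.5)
Your proof is correct. Continuity gives $f(\supp\mu)\subset\supp f_*\mu$, and closedness of the support upgrades this to the closure. The hypothesis $\mu(X\setminus\supp\mu)=0$ is used exactly once, to show that the open set $Y\setminus\overline{f(\supp\mu)}$ is $f_*\mu$-null. The paper gives no proof of its own for this lemma; it only quotes \cite[Proposition 2.3]{nakajima2022coupling}. Your two-inclusion argument is the standard one, so there is no different route to compare against and nothing missing.
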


\begin{definition}[Metric measure space, mm-space]
    A triple $(X,d,\mu)$ is called a \emph{metric measure space}, or \emph{mm-space} for short, if the pair $(X,d)$ is a complete separable metric space, $\mu$ is a Borel probability measure, and $\supp \mu = X$.
\end{definition}
\begin{remark}
    We sometimes say that $X$ is an mm-space, for which the associated distance function is denoted by $d_X$ and the measure by $\mu_X$.
\end{remark}

\begin{definition}[Prohorov distance] Let $\mu$ and $\nu$ be two Borel probability measures on a separable distance space $X$. The \emph{Prohorov distance} between $\mu$ and $\nu$ is defined by \begin{equation*}
        \dpr(\mu,\nu) \coloneqq \inf\bigl\{\varepsilon \geq 0\mid \mu\bigl(U_X(A;\varepsilon)\bigr)\geq \nu(A)-\varepsilon \ \textrm{for any Borel subset}\ A\subset X\bigr\}.
    \end{equation*}
\end{definition}

\begin{definition}[Ky Fan metric] Let $(X,\mu)$ be a measure space and $Y$ a metric space. We define the \emph{Ky Fan metric} $\kf^\mu$ on the set of $\mu$-measurable maps from $X$ to $Y$ by\begin{equation*}
        \kf^\mu(f,g) \coloneqq \inf\left\{\epsilon \geq 0\mid \mu(\left\{x\in X\mid d_Y(f(x),g(x)) > \epsilon\right\}) \leq \epsilon\right\}
    \end{equation*} for any two $\mu$-measurable maps $f,g\colon X\to Y$.
\end{definition}

\begin{lemma}[{\cite[Lemma 1.26]{shioya2016mmg}}]\label{lem:ProhorovLeqKyFan}
    Let $(X,\mu)$ be a Borel probability measure space, $Y$ a metric space, and $f,g\colon X\to Y$ maps. Then, we have \begin{equation*}
        \dpr(f_*\mu,g_*\mu) \leq \kf^\mu(f,g).
    \end{equation*}
\end{lemma}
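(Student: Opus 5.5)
The plan is to unwind both definitions and compare them directly. For any $\varepsilon > \kf^\mu(f,g)$, I will show that $\varepsilon$ is admissible in the definition of $\dpr(g_*\mu, f_*\mu)$. Letting $\varepsilon \downarrow \kf^\mu(f,g)$ then gives the claim. I read the Prohorov condition with $\mu$ and $\nu$ in the roles $f_*\mu$ and $g_*\mu$; the other ordering is handled by swapping $f$ and $g$, since the Ky Fan metric is symmetric in $f$ and $g$.

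Fix $\varepsilon > \kf^\mu(f,g)$. The set of admissible values in the definition of the Ky Fan metric is upward closed: if $\mu(\{d_Y(f,g) > \varepsilon_0\}) \le \varepsilon_0$ and $\varepsilon_1 \ge \varepsilon_0$, then the same holds for $\varepsilon_1$. So I can choose $\varepsilon_0$ with $\kf^\mu(f,g) \le \varepsilon_0 < \varepsilon$ satisfying
\begin{equation*}
E \coloneqq \{x \in X \mid d_Y(f(x),g(x)) > \varepsilon_0\}, \qquad \mu(E) \le \varepsilon_0 \le \varepsilon.
\end{equation*}
I choose $\varepsilon_0$ strictly below $\varepsilon$ deliberately. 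This gives the strict inequality needed to land in the open neighbourhood $U_Y(A;\varepsilon)$.

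Now let $A \subset Y$ be Borel and take $x \in g^{-1}(A) \setminus E$. Then $g(x) \in A$ and $d_Y(f(x),g(x)) \le \varepsilon_0 < \varepsilon$, so $f(x) \in U_Y(A;\varepsilon)$. Hence $g^{-1}(A) \setminus E \subset f^{-1}(U_Y(A;\varepsilon))$, which gives
\begin{equation*}
f_*\mu\bigl(U_Y(A;\varepsilon)\bigr) \ge \mu(g^{-1}(A)) - \mu(E) \ge g_*\mu(A) - \varepsilon .
\end{equation*}
So $\varepsilon$ is admissible and $\dpr(f_*\mu, g_*\mu) \le \varepsilon$. Taking the infimum over $\varepsilon$ finishes the proof.

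The only delicate point is measure-theoretic rather than combinatorial. The set $E$ and the preimages must be $\mu$-measurable. I take this as part of the hypotheses ($f,g$ measurable, and $x \mapsto d_Y(f(x),g(x))$ measurable, e.g.\ when $Y$ is separable), exactly as is implicit in the definition of $\kf^\mu$. If it is not, I would replace $\mu$ by the outer measure in the inclusion step.
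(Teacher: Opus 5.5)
Your proof is correct and takes the standard route. The paper gives no proof of its own and simply cites \cite[Lemma 1.26]{shioya2016mmg}, which likewise compares the two definitions directly, via the inclusion $g^{-1}(A)\setminus E\subset f^{-1}(U_Y(A;\varepsilon))$ for the exceptional set $E$ of the Ky Fan condition. Choosing an admissible $\varepsilon_0<\varepsilon$ avoids needing the Ky Fan infimum to be attained; the $\dpr(g_*\mu,f_*\mu)$ in your first paragraph should read $\dpr(f_*\mu,g_*\mu)$, though your symmetry remark covers either ordering.
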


\begin{definition}[Coupling]\label{def:Coupling}
    Let $(X,\mu),(Y,\nu)$ be Borel probability measure spaces. A Borel probability measure $\pi$ on $X\times Y$ is called a \emph{coupling} of $\mu$ and $\nu$ if $(\pr_1)_*\pi = \mu$ and $(\pr_2)_*\pi = \nu$. We denote by $\Pi(\mu,\nu)$ the set of couplings of $\mu$ and $\nu$.
\end{definition}

\begin{lemma}[Prohorov's theorem]
    Let $X$ be a separable metric space and $M$ a subset of the set of the Borel probability measures on $X$. Then, the following (1) and (2) are equivalent to each other:\begin{enumerate}
        \item For any $\epsilon > 0$, there exists a compact set $K$ in $X$ such that $\mu(X\setminus K) < \epsilon$ for any $\mu\in M$.
        \item $M$ is relatively compact with respect to $\dpr$.
    \end{enumerate}
\end{lemma}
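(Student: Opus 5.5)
The plan is to prove the two implications separately. Throughout I use the standard fact that on a separable metric space $\dpr$ metrizes weak convergence of Borel probability measures, so relative compactness with respect to $\dpr$ is sequential relative compactness in the weak topology. For $(2)\Rightarrow(1)$ I will assume additionally that $X$ is complete. I believe the converse fails in general for merely separable spaces, and in the paper the theorem is only applied to complete separable spaces. So the plan is to note this hypothesis explicitly rather than try to avoid it.

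\emph{(1)$\Rightarrow$(2).} I would embed $X$ homeomorphically into a compact metric space $\hat X$, e.g.\ the Hilbert cube via a countable dense set and Urysohn's metrization. Given a sequence $\mu_n\in M$, I push the measures forward to $\hat X$. By the Riesz representation theorem the Borel probability measures on $\hat X$ form the state space of $C(\hat X)$, which is weak-$*$ compact by Banach--Alaoglu and metrizable since $C(\hat X)$ is separable. So a subsequence converges weakly to some probability measure $\hat\mu$ on $\hat X$. By (1), choose compact sets $K_m\subset X$ with $\mu_n(X\setminus K_m)<1/m$ for all $n$. Each $K_m$ is compact, hence closed, in $\hat X$, so the portmanteau inequality for closed sets gives $\hat\mu(K_m)\ge\limsup\mu_n(K_m)\ge 1-1/m$. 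Hence $\hat\mu$ is concentrated on $\bigcup_m K_m\subset X$ and restricts to a Borel probability measure $\mu$ on $X$.

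It remains to check that $\mu_n\to\mu$ weakly on $X$ itself. Every open $U\subset X$ has the form $\hat U\cap X$ for some open $\hat U\subset\hat X$, and then
\begin{equation*}
\liminf\mu_n(U)=\liminf\hat\mu_n(\hat U)\ge\hat\mu(\hat U)=\mu(U).
\end{equation*}
By the portmanteau theorem on $X$ this gives weak convergence, and hence $\dpr$-convergence.

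\emph{(2)$\Rightarrow$(1).} Fix $\epsilon>0$ and a countable dense set $\{x_i\}$. For each $k$, let $G_{k,N}$ be the union of the open balls of radius $1/k$ around $x_1,\dots,x_N$. The key claim is that there is $N_k$ with $\mu(G_{k,N_k})>1-\epsilon 2^{-k}$ for every $\mu\in M$. Suppose not. Then for each $N$ there is $\mu_N\in M$ with $\mu_N(G_{k,N})\le 1-\epsilon2^{-k}$. By (2) a subsequence converges to some $\nu$. For fixed $N'$ and all $N\ge N'$ we have $G_{k,N'}\subset G_{k,N}$, so the portmanteau inequality for open sets gives
\begin{equation*}
\nu(G_{k,N'})\le\liminf\mu_N(G_{k,N'})\le 1-\epsilon2^{-k}.
\end{equation*}
Letting $N'\to\infty$, the sets $G_{k,N'}$ increase to $X$, which contradicts $\nu(X)=1$.

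Now set $K:=\overline{\bigcap_k G_{k,N_k}}$. This set is totally bounded and closed, hence compact because $X$ is complete. Moreover
\begin{equation*}
\mu(X\setminus K)\le\sum_k\epsilon2^{-k}=\epsilon\quad\text{for all }\mu\in M.
\end{equation*}

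The main obstacle is the step in (1)$\Rightarrow$(2) where the limit measure $\hat\mu$ is shown to live on $X$ rather than on $\hat X$, together with transferring weak convergence back to $X$. Both are handled by the closedness of the $K_m$ in $\hat X$ and by open sets of $X$ being traces of open sets of $\hat X$. Everything else is routine portmanteau bookkeeping.
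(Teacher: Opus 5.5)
The paper gives no proof of this lemma; it is quoted as the classical Prohorov theorem, so there is no argument in the paper to compare yours with. Your proof is the standard textbook one and it is correct: for (1)$\Rightarrow$(2), compactify $X$ and use Riesz--Banach--Alaoglu, then use tightness to show the limit lives on $X$; for (2)$\Rightarrow$(1), use the finite-covering argument with the sets $G_{k,N}$.

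Your caveat about completeness is well founded. As the paper literally states it, for an arbitrary separable metric space, the implication (2)$\Rightarrow$(1) is false. A classical example of Preiss gives a $\dpr$-relatively compact family of Borel probability measures on the rationals that is not uniformly tight. Completeness therefore has to be added, and it enters exactly where you use it: to conclude that the closed, totally bounded set $K=\overline{\bigcap_k G_{k,N_k}}$ is compact. The paper only needs (1)$\Rightarrow$(2), implicitly, for the $\dpr$-compactness of $\Pi(\mu,\nu)$ on products of complete separable spaces, and you prove that half for every separable metric space.

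One cosmetic point: your final estimate gives $\mu(X\setminus K)\le\epsilon$, while (1) asks for the strict inequality $\mu(X\setminus K)<\epsilon$. To fix this, note that each term satisfies $\mu(X\setminus G_{k,N_k})<\epsilon 2^{-k}$ strictly, so the series is strictly less than $\sum_k \epsilon 2^{-k}=\epsilon$. Alternatively, run the argument with $\epsilon/2$ in place of $\epsilon$.
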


\begin{lemma}\label{lem:CouplingCompact}
    Let $(X,\mu),(Y,\nu)$ be Borel probability measure spaces. Then, $\Pi(\mu,\nu)$ is $\dpr$-compact.
\end{lemma}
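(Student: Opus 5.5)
The plan is to obtain compactness of $\Pi(\mu,\nu)$ from Prohorov's theorem: first show that $\Pi(\mu,\nu)$ is uniformly tight, which makes it relatively $\dpr$-compact, and then show that it is $\dpr$-closed. Throughout I assume, as everywhere in the paper, that $X$ and $Y$ are complete separable metric spaces. I put on $X\times Y$ the product metric $\max\{d_X,d_Y\}$, which is again complete and separable.

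\emph{Tightness.} A single Borel probability measure on a complete separable metric space is tight (Ulam's theorem). So for $\epsilon>0$ there are compact sets $K\subset X$ and $L\subset Y$ with $\mu(X\setminus K)<\epsilon/2$ and $\nu(Y\setminus L)<\epsilon/2$. For every $\pi\in\Pi(\mu,\nu)$ we then have
\begin{equation*}
    \pi\bigl((X\times Y)\setminus(K\times L)\bigr)\leq \pi\bigl((X\setminus K)\times Y\bigr)+\pi\bigl(X\times(Y\setminus L)\bigr)=\mu(X\setminus K)+\nu(Y\setminus L)<\epsilon .
\end{equation*}
Since $K\times L$ is compact, condition (1) of Prohorov's theorem holds for $M=\Pi(\mu,\nu)$, so $\Pi(\mu,\nu)$ is relatively compact with respect to $\dpr$. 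It is also nonempty, since $\mu\otimes\nu\in\Pi(\mu,\nu)$.

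\emph{Closedness.} Let $\pi_n\in\Pi(\mu,\nu)$ converge to a Borel probability measure $\pi$ in $\dpr$. On a separable metric space, $\dpr$-convergence is equivalent to weak convergence. The projections $\pr_1,\pr_2$ are continuous (indeed $1$-Lipschitz), so $(\pr_i)_*\pi_n\to(\pr_i)_*\pi$ weakly; concretely, $\int\phi\circ\pr_1\,d\pi_n\to\int\phi\circ\pr_1\,d\pi$ for every bounded continuous $\phi$ on $X$. But $(\pr_1)_*\pi_n=\mu$ for all $n$, and weak limits are unique, so $(\pr_1)_*\pi=\mu$. Likewise $(\pr_2)_*\pi=\nu$, hence $\pi\in\Pi(\mu,\nu)$.

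A relatively compact and closed subset is compact, so $\Pi(\mu,\nu)$ is $\dpr$-compact. I expect the only delicate points to be two standing facts: tightness of a single measure, which needs completeness and separability of $X$ and $Y$, and the equivalence of $\dpr$-convergence with weak convergence. Alternatively, the closedness step can avoid weak convergence: since $\pr_1$ is $1$-Lipschitz, one checks directly from the definition that $\dpr\bigl((\pr_1)_*\pi_n,(\pr_1)_*\pi\bigr)\leq\dpr(\pi_n,\pi)$, which gives $(\pr_1)_*\pi=\mu$ immediately.
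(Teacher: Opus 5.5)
Your proof is correct, and it is the argument the paper relies on. The paper gives no proof of this lemma but places it immediately after Prohorov's theorem, so the intended route is uniform tightness of $\Pi(\mu,\nu)$ from tightness of the two marginals, plus $\dpr$-closedness of the marginal constraints; your $1$-Lipschitz projection estimate is a clean way to get the latter. You were also right to state explicitly that $X$ and $Y$ are complete separable metric spaces and that $X\times Y$ carries the $l^\infty$-product metric: the statement leaves this implicit, but Ulam's tightness theorem needs it.
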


\begin{definition}[Weak Hausdorff convergence] Let $X$ be a metric space and $S_n$, $S$ be closed sets on $X$, $n = 1,2,\ldots$~. We say that $S_n$ converges to $S$ in the \emph{weak Hausdorff} sense as $n\to\infty$ if the following (1) and (2) are satisfied.
    \begin{enumerate}
        \item For any $x\in S$, we have \begin{equation*}
                  \lim_{n\to\infty} d_X(x,S_n) = 0.
              \end{equation*}
        \item For any $x\in X\setminus S$, we have \begin{equation*}
                  \liminf_{n\to\infty} d_X(x,S_n) > 0.
              \end{equation*}
    \end{enumerate}
\end{definition}

\begin{theorem}[{\cite[Theorem 5.2.12]{beer1993topologies}}]\label{thm:weakHausdorffSubSequence} Any sequence of closed sets on a complete separable metric space has a subsequence that converges in the weak Hausdorff sense.
\end{theorem}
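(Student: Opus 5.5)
The plan is a direct diagonal argument using only separability; completeness will not be needed. First I would rewrite the two conditions in Kuratowski form. Put
\begin{equation*}
  \operatorname{Li} S_n \coloneqq \Bigl\{x\in X \mid \lim_{n\to\infty} d_X(x,S_n)=0\Bigr\},\qquad
  \operatorname{Ls} S_n \coloneqq \Bigl\{x\in X \mid \liminf_{n\to\infty} d_X(x,S_n)=0\Bigr\}.
\end{equation*}
Condition (1) says $S\subset \operatorname{Li} S_n$, and condition (2) says $\operatorname{Ls} S_n\subset S$. Since always $\operatorname{Li}S_n\subset \operatorname{Ls}S_n$, weak Hausdorff convergence to $S$ is equivalent to $\operatorname{Li}S_n=\operatorname{Ls}S_n=S$, i.e.\ Kuratowski convergence. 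Here we use the convention $d_X(x,\emptyset)=\infty$, so that $S=\emptyset$ is allowed. So it suffices to extract a subsequence with $\operatorname{Ls}\subset\operatorname{Li}$ and to take $S$ to be the common value. This $S$ is automatically closed, because $x\mapsto d_X(x,S_n)$ is $1$-Lipschitz, so $\operatorname{Li}S_n$ is closed.

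Fix a countable base $\{U_k\}_{k\ge1}$ of open sets, which exists by separability. I would construct nested subsequences as follows. Given a subsequence from stage $k-1$, at least one of two index sets is infinite: the indices with $S_n\cap U_k\neq\emptyset$, or those with $S_n\cap U_k=\emptyset$. Pass to a further subsequence lying in an infinite one of these sets. Then take the diagonal subsequence $(S_{n_j})_j$. For every $k$, this subsequence eventually either always meets $U_k$ or always misses $U_k$.

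Next I would check $\operatorname{Ls}S_{n_j}\subset\operatorname{Li}S_{n_j}$. Let $x\in\operatorname{Ls}$ and $\epsilon>0$. Pick a basic set $U_k$ with $x\in U_k\subset U_X(x;\epsilon)$. Since $\liminf_j d_X(x,S_{n_j})=0$, infinitely many $S_{n_j}$ meet $U_k$. By the dichotomy, therefore all but finitely many $S_{n_j}$ meet $U_k$. Hence $d_X(x,S_{n_j})<\epsilon$ eventually. As $\epsilon$ was arbitrary, $x\in\operatorname{Li}$. Setting $S\coloneqq\operatorname{Li}S_{n_j}$ then gives (1) by definition and (2) by the inclusion just shown.

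The only delicate point is the bookkeeping in the diagonal step: the eventual dichotomy must hold for every $k$ simultaneously, which the diagonal extraction guarantees. Apart from that, one only needs the translation between the distance conditions and the neighbourhood conditions. This step relies on the base elements inside $U_X(x;\epsilon)$ being neighbourhoods of $x$, which is exactly what a countable base supplies.
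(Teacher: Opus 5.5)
Your proof is correct, and it takes a different route from the paper only in the sense that the paper has no argument of its own here: it cites Beer's monograph. That citation leaves implicit that the two defining conditions of weak Hausdorff convergence are exactly the Kuratowski--Painlev\'e conditions $S\subset\operatorname{Li}S_n$ and $\operatorname{Ls}S_n\subset S$. You make this translation explicit and then run the classical diagonal argument over a countable base, so your version is self-contained and elementary. What your approach buys is threefold. First, it shows that the completeness hypothesis in the statement is superfluous, since only separability (equivalently, second countability of the metric space) is used. Second, it states the convention $d_X(x,\emptyset)=+\infty$, which the paper's definition never specifies but which is needed as soon as some $S_n$ are empty. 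Third, it allows $S=\emptyset$ as a limit, which is unavoidable (take $S_n=\{n\}\subset\R$). This matches how the paper later uses the theorem: it either treats an empty limit separately or proves non-emptiness afterwards from the measure bound $\mu(S)\geq\limsup_n\mu(S_n)$. The citation buys brevity and places the result in general hyperspace theory, but it leaves the reader to match the paper's definitions with those in the source.
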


\begin{proposition}[{\cite[Lemma 3.9]{nakajima2022coupling}}]\label{thm:weakHausdorffMeasureBound}
    Let $X$ be a metric space, $\mu$ be a Borel probability measure on $X$, and $S_n,S$ be closed sets on $X$, $n = 1,2,\ldots$~. If $S_n$ converges to $S$ in the weak Hausdorff sense, then we have\begin{equation*}
        \mu(S) \geq \limsup_{n\to\infty} \mu(S_n).
    \end{equation*}
\end{proposition}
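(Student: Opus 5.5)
The plan is a direct measure-theoretic argument that uses only condition (2) of weak Hausdorff convergence. The idea is to exhaust the complement $X\setminus S$ by an increasing sequence of Borel sets. Each of these sets is disjoint from all $S_n$ with $n$ large, so each one gives an upper bound on $\mu(S_n)$ for large $n$. Continuity of measure from below then gives $\limsup_n \mu(S_n)\le \mu(S)$. Condition (1) plays no role.

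Concretely, for $m=1,2,\ldots$ I will set
\begin{equation*}
    A_m \coloneqq (X\setminus S)\cap \bigcap_{n\geq m}\bigl\{x\in X \mid d_X(x,S_n) > 1/m\bigr\}.
\end{equation*}
Each function $x\mapsto d_X(x,S_n)$ is $1$-Lipschitz, or identically $+\infty$ if $S_n=\emptyset$. Hence each set $\{d_X(\cdot,S_n)>1/m\}$ is open, and $A_m$ is Borel as a countable intersection of open sets with the open set $X\setminus S$.

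The sets $A_m$ increase in $m$: passing from $m$ to $m+1$ lowers the threshold from $1/m$ to $1/(m+1)$ and drops the index $n=m$, so $A_m\subset A_{m+1}$. They also exhaust $X\setminus S$. Indeed, for $x\in X\setminus S$, condition (2) gives $\liminf_{n} d_X(x,S_n)>0$. So there are $\delta>0$ and $N$ with $d_X(x,S_n)>\delta$ for all $n\geq N$, and any $m\geq\max\{N,1/\delta\}$ then gives $x\in A_m$. Therefore $\bigcup_m A_m = X\setminus S$, and $\mu(A_m)\uparrow\mu(X\setminus S)$.

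Now fix $m$. For every $n\geq m$ and every $x\in A_m$ we have $d_X(x,S_n)>1/m>0$, so $x\notin S_n$. Thus $S_n\subset X\setminus A_m$, and hence $\mu(S_n)\le 1-\mu(A_m)$ for all $n\geq m$. It follows that $\limsup_{n\to\infty}\mu(S_n)\le 1-\mu(A_m)$ for every $m$. Letting $m\to\infty$ gives
\begin{equation*}
    \limsup_{n\to\infty}\mu(S_n)\le 1-\mu(X\setminus S)=\mu(S).
\end{equation*}

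I expect no real obstacle. The only points needing care are the measurability of $A_m$ and the monotonicity of the sequence $(A_m)$, which together justify continuity from below. Both follow from coupling the threshold $1/m$ with the starting index $m$, as in the definition above.
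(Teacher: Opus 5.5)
Your proof is correct and complete. The sets $A_m$ are Borel. They increase to $X\setminus S$ by condition (2), and they are disjoint from $S_n$ for every $n\geq m$, so continuity from below gives $\limsup_n\mu(S_n)\leq\mu(S)$. You are also right that condition (1) is never used.

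The paper gives no argument of its own here; it imports the statement from Nakajima's Lemma 3.9. So there is no internal proof to compare with, and yours fills that gap with a self-contained argument.

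The same idea can be put dually and a bit more compactly. Condition (2) says exactly that the upper closed limit
$\bigcap_{N}\overline{\bigcup_{n\geq N}S_n}=\{x\mid\liminf_{n}d_X(x,S_n)=0\}$
is contained in $S$. Continuity from above then gives
$\mu(S)\geq\lim_{N}\mu\bigl(\overline{\bigcup_{n\geq N}S_n}\bigr)\geq\limsup_{n}\mu(S_n)$ directly.
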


\section{Geometric Data Set}
\subsection{Feature Space and Geometric Data Set}

\begin{definition}[Tame]
    Let $X$ be a non-empty set and $F$ a non-empty subset of the power set $\R^X$. We define a function $d_F\colon X\times X \to [0,+\infty]$ by \begin{equation*}
        d_F(x,y) \coloneqq \sup_{f\in F} |f(x)-f(y)|,\quad x,y\in X.
    \end{equation*}
    $d_F$ is symmetric and satisfies a triangle inequality.

    $F$ is said to be \emph{tame} if $d_F(x,y) < +\infty$ for any $x,y\in X$.
\end{definition}

\begin{definition}[Feature space]
    A pair $(X,F)$ is called a \emph{feature space} if $X$ is a non-empty set, $F\subset \R^X$ is tame, and $d_F$ is a distance function. For a feature space $(X,F)$, any element of $F$ is called a \emph{feature}.
\end{definition}
\begin{remark}
    We sometimes say that $X$ is a feature space, for which the associated feature set is denoted by $F_X$ and the distance function $d_{F_X}$ by $d_X$.
\end{remark}

\begin{proposition}\label{thm:lip1=d}
    Let $X$ be a metric space. Then we have $d_{\lip1(X)} = d_X$.
\end{proposition}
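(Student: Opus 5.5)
We prove the two inequalities $d_{\lip1(X)} \leq d_X$ and $d_{\lip1(X)} \geq d_X$ separately, fixing arbitrary points $x,y\in X$.

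\emph{Upper bound.} Every $f\in\lip1(X)$ is $1$-Lipschitz, so $|f(x)-f(y)|\leq d_X(x,y)$ for all such $f$. Taking the supremum over $f\in\lip1(X)$ gives $d_{\lip1(X)}(x,y)\leq d_X(x,y)$. In particular $\lip1(X)$ is tame, so $d_{\lip1(X)}$ takes finite values.

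\emph{Lower bound.} We exhibit one feature that attains the value $d_X(x,y)$. Let $f\coloneqq d_X(x,\cdot)$. By the triangle inequality,
\begin{equation*}
    |f(z)-f(w)| = |d_X(x,z)-d_X(x,w)| \leq d_X(z,w) \quad \text{for all } z,w\in X,
\end{equation*}
so $f\in\lip1(X)$. Moreover $|f(x)-f(y)| = |0-d_X(x,y)| = d_X(x,y)$. Hence the supremum defining $d_{\lip1(X)}(x,y)$ is at least $d_X(x,y)$; in fact it is attained by $f$.

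Combining the two inequalities gives $d_{\lip1(X)}(x,y)=d_X(x,y)$ for all $x,y\in X$, which is the claim. The only step that needs an idea is choosing the distance function from $x$ as a test feature; the rest is the definition of $\lip1(X)$.
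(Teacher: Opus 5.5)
Your proof is correct and follows the same approach as the paper. The upper bound comes from the 1-Lipschitz property of every feature, and the lower bound from the test function $z\mapsto d_X(x,z)$. You additionally check that this function is 1-Lipschitz via the triangle inequality, which the paper only asserts.
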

\begin{proof}
    Take any $x,y\in X$. For any $f\in\lip1(X)$, it follows that
    \begin{equation*}
        |f(x) - f(y)| \leq d_X(x,y),
    \end{equation*} which proves $d_{\lip1(X)}(x,y) \leq d_X(x,y)$. We define $g\in\lip1(X)$ as \begin{equation*}
        g(z) \coloneqq d_X(x,z),\ z\in X.
    \end{equation*}
    Since \begin{equation*}
        d_X(x,y) = |g(x)-g(y)| \leq d_{\lip1(X)}(x,y),
    \end{equation*} we have $d_{\lip1(X)}(x,y) = d_X(x,y)$. The arbitrariness of $x$ and $y$ completes the proof.
\end{proof}

\remark{
    For a feature space $(X,F)$, we have $F\subset \lip1(X,d_F)$.
}

Let $(X,F)$ be a feature space, $p\in\lip1(\R)$, $Y$ a non-empty set, and $\varphi\colon Y\to X$ a map. We denote $p\circ F \coloneqq \{p\circ f \mid f\in F\}$ and $F\circ\varphi\coloneqq\{f\circ \varphi\mid f\in F\}$.

\begin{definition}[Product feature space]
    The \emph{product feature space} of $(X,F)$ and $(Y,G)$ is defined by $(X\times Y,F\otimes G)$, where \begin{equation*}
        F\otimes G \coloneqq F\circ\pr_1 \cup\ G\circ\pr_2.
    \end{equation*}
\end{definition}
\begin{remark}
    $d_{F\otimes G}$ coincides with $d_\infty$ on $X\times Y$, where
    \begin{equation*}
        d_\infty\left((x_1,y_1),(x_2,y_2)\right) \coloneqq \max\{d_F(x_1,x_2),d_G(y_1,y_2)\}
    \end{equation*}
    for all $x_1,x_2\in X$ and $y_1,y_2\in Y$.
\end{remark}

\begin{definition}[Geometric data set]
    We call a triple $(X,F,\mu)$ a \emph{geometric data set} if $(X,d_F,\mu)$ is an mm-space.
\end{definition}
\begin{remark}
    We sometimes say that $X$ is a geometric data set, for which the associated feature set is denoted by $F_X$, the measure on $X$ by $\mu_X$, and the distance function $d_{F_X}$ by $d_X$.
\end{remark}

\begin{definition}[Singleton geometric data set]
    For the singleton $\ast$, its element is also represented by $\ast$. For a subset $A\subset \R$, we define a \emph{singleton geometric data set} by \begin{equation*}
        \ast_A \coloneqq (\ast, \{\ast \mapsto a \mid a\in A\}, \delta_\ast),
    \end{equation*} where $\delta_\ast$ is the Dirac measure at $\ast$.
\end{definition}

\begin{definition}[Product geometric data set]
    Let $X,Y$ be two geometric data sets. The \emph{product geometric data set} $X\times Y$ of $X$ and $Y$ is defined by \begin{equation*}
        F_{X\times Y} \coloneqq F_X\otimes F_Y,\quad \mu_{X\times Y} \coloneqq \mu_X\otimes \mu_Y.
    \end{equation*}
\end{definition}

\begin{proposition}\label{prop:pointwiseEqKf}
    Let $X$ be an mm-space and $Y$ be a metric space. On $\lip1(X,Y)$, the pointwise convergence and $\kf^X$-convergence are equivalent to each other. In particular, $\kf^X$ is a metrization of not only the convergence of measure but also the pointwise convergence.
\end{proposition}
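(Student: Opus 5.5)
We prove the two implications separately, using only that every $f\in\lip1(X,Y)$ is $1$-Lipschitz, that $\mu_X$ is a Borel probability measure, and that $X$ is separable with $\supp\mu_X = X$. Write $\mu=\mu_X$ and $\kf=\kf^{\mu}$.

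\emph{Pointwise convergence implies $\kf$-convergence.} Let $f_n,f\in\lip1(X,Y)$ with $f_n\to f$ pointwise. The functions $x\mapsto d_Y(f_n(x),f(x))$ are continuous, hence Borel. They converge to $0$ at every point of $X$. Since $\mu$ is finite, Egorov's theorem (or simply dominated convergence applied to $\min\{d_Y(f_n,f),1\}$) gives convergence in measure. Convergence in measure is exactly $\kf(f_n,f)\to 0$.

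\emph{$\kf$-convergence implies pointwise convergence.} Suppose $\kf(f_n,f)\to0$, fix $x\in X$ and $\epsilon>0$, and put $B = B(x,\epsilon)$. Since $\supp\mu = X$, we have $\mu(B)>0$. Choose $n_0$ such that $\kf(f_n,f)<\min\{\epsilon,\mu(B)\}$ for all $n\ge n_0$. Then for $n\ge n_0$ the set $\{y\mid d_Y(f_n(y),f(y))>\epsilon\}$ has measure less than $\mu(B)$, so it cannot contain all of $B$. Hence there is $y_n\in B$ with $d_Y(f_n(y_n),f(y_n))\le\epsilon$. By the $1$-Lipschitz property,
\[
d_Y(f_n(x),f(x)) \le d_Y(f_n(x),f_n(y_n)) + d_Y(f_n(y_n),f(y_n)) + d_Y(f(y_n),f(x)) \le 3\epsilon .
\]
Thus $f_n(x)\to f(x)$.

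The \emph{in particular} statement follows because $\kf$ is a metric on $\lip1(X,Y)$. Indeed, $\kf(f,g)=0$ forces $f=g$ $\mu$-a.e., and two continuous maps that agree $\mu$-a.e. agree on a dense set (again by full support), hence everywhere. So $\kf$ metrizes pointwise convergence on $\lip1(X,Y)$, and by definition it metrizes convergence in measure. Should the equivalence be needed at the level of topologies rather than sequences, note that the uniform Lipschitz bound makes the pointwise topology coincide with pointwise convergence on a countable dense subset of $X$. That topology is first countable, so sequential equivalence suffices.

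The only point requiring care is the use of $\supp\mu = X$ in the second direction. Without full support, pointwise convergence can fail outside the support, so this hypothesis from the definition of an mm-space is essential.
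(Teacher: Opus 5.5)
Your proof is correct, and on the first implication it takes a genuinely different route from the paper.

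\textbf{Second implication ($\kf^X$-convergence implies pointwise).} This is the paper's idea. Full support gives a ball of positive measure around $x$, and the $1$-Lipschitz bound transfers smallness of $d_Y(f_n,f)$ from some point of that ball back to $x$. The paper argues by contrapositive along a subsequence; you argue directly. Your bookkeeping is also cleaner. The paper's claimed inclusion of $U_X(x;\delta-\epsilon)$ in $\{d_Y(f_{\iota(n)},f)>\epsilon\}$ really needs radius $(\delta-\epsilon)/2$, since both maps move.

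\textbf{First implication (pointwise implies $\kf^X$-convergence).} Here the two routes differ.
\begin{itemize}
\item The paper uses tightness of $\mu_X$ to pick a compact $K$ with $\mu_X(K)>1-\epsilon$ and covers $K$ by finitely many $\epsilon$-balls. The common Lipschitz bound then upgrades convergence at the finitely many centres to uniform convergence within $3\epsilon$ on $K$, so $\kf^X(f_n,f)\le 3\epsilon$.
\item You invoke the general fact that, for a finite measure, everywhere convergence of measurable functions implies convergence in measure (dominated convergence or Egorov). This needs neither the Lipschitz bound nor tightness, only measurability of $d_Y(f_n,f)$.
\end{itemize}
The cost of your route is that it is intrinsically sequential. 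The paper's compactness argument gives the stronger conclusion of uniform convergence on compacta. Since it only evaluates at finitely many points, it also goes through verbatim for nets, which yields equality of the two topologies directly. You recover that equality separately. On equi-Lipschitz maps the pointwise topology coincides with pointwise convergence on a countable dense set, so it is first countable and sequential equivalence suffices. That remark is correct and worth keeping.

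Your check that $\kf^X$ actually separates points of $\lip1(X,Y)$, using full support, is a detail the paper does not spell out.
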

\begin{proof}
    First, we show that if a sequence in $\lip1(X,Y)$ converges pointwise, then it $\kf^X$-converges. Let $f_n, f\in \lip1(X,Y)$, $n=1,2,\ldots$, and assume that $f_n$ converges pointwise to $f$ as $n\to\infty$. Take any $\varepsilon > 0$. There exists a compact subset $K\subset X$ such that $\mu_X(K) > 1-\varepsilon$. By the compactness of $K$, there exist finitely many points
    \begin{equation*}
        x_1,x_2,\ldots,x_M\in K \text{ such that } K\subset\bigcup_{m=1}^M U_X(x_m; \varepsilon).
    \end{equation*}
    Since $f_n$ converges pointwise to $f$ as $n\to\infty$ and $M$ is finite, there exists $N\in\N$ such that \begin{equation*}
        d_Y(f_n(x_m),f(x_m)) < \varepsilon\quad \textrm{for any}\ m=1,2,\ldots,M \text{ and } n\geq N.
    \end{equation*}
    For any $x\in K$, there exists $m\in\{1,2,\ldots,M\}$ with $d_X(x,x_m) < \varepsilon$. It follows that $d_Y(f_n(x),f(x)) < 3\varepsilon$ for $n\geq N$. Since $\mu(X\setminus K) < \varepsilon$ and $x\in K$ is arbitrary, we have $\kf^X(f_n,f) \leq 3\varepsilon$ for any $n\geq N$.

    Next, we show that if a function sequence in $\lip1(X,Y)$ $\kf^X$-converges, then it converges pointwise. Let $f_n, f\in \lip1(X,Y)$, $n=1,2,\ldots$, such that $f_n$ does not converge pointwise to $f$ as $n\to\infty$. There exists $x\in X$, $\delta > 0$, and a subsequence $\{f_{\iota(n)}\}_n$ such that \begin{equation*}
        d_Y(f_{\iota(n)}(x),f(x)) > \delta\ \textrm{for}\ n=1,2,\ldots.
    \end{equation*}
    Take $r>0$ so small that $\mu(U_X(x,\delta - r)) > r$. For any $\varepsilon \in (0,r)$ and any $n\in\N$, \begin{align*}
        \mu_X(\{x\in X\mid d_Y(f_{\iota(n)}(x),f(x)) > \varepsilon\}) & \geq \mu_X(U_X(x;\delta-\varepsilon)) \\
                                                                      & \geq r > \varepsilon.
    \end{align*} Since it shows that $\kf^X(f_{\iota(n)},f) \geq r$, $n=1,2,\ldots$, we prove that $f_n$ does not $\kf^X$-converge to $f$ as $n\to\infty$.
\end{proof}

\begin{remark}
    Hereafter, we will denote the closure of $F$ with respect to the pointwise convergence or the topology of convergence in measure by $\overline{F}$ for $F\subset\lip1(X)$.
\end{remark}

\begin{definition}[Isomorphism of geometric data set]
    Two geometric data sets $X$ and $Y$ are said to be \emph{isomorphic}, and we write $X\simeq Y$, if there exists a Borel measurable map $\varphi\colon X\to Y$ such that $\varphi_*\mu_X = \mu_Y$ and $\overline{F_Y}\circ\varphi = \overline{F_X}$. Such $\varphi$ is called an \emph{isomorphism}. Denote by $\D$ the set of isomorphism classes of geometric data sets.
\end{definition}

\begin{definition}[Feature order, dominate]
    We say that a geometric data set $X$ \emph{dominates} a geometric data set $Y$, and we write $Y\preceq X$, if there exists a Borel measurable map $f\colon X\to Y$ such that $f_*\mu_X = \mu_Y$ and $F_Y\circ f \subset \overline{F_X}$. Such an $f$ is called a \emph{domination}. The relation $\preceq$ is called the \emph{feature order relation}.
\end{definition}

\begin{proposition}[{\cite[Proposition 3.5]{hanika2022gds}}]
    The relation $\preceq$ is a partial order on $\D$.
\end{proposition}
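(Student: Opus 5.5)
We have to show that $\preceq$ is reflexive, transitive and antisymmetric on $\D$, and that it is well defined on isomorphism classes.

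\emph{Well-definedness.} Let $\varphi\colon X\to X'$ be an isomorphism, so $\overline{F_{X'}}\circ\varphi=\overline{F_X}$. Suppose $f\colon X\to Y$ is a domination. Since $F_Y\circ f\subset\overline{F_X}=\overline{F_{X'}}\circ\varphi$, we want a domination from $X'$ to $Y$. The plan is to produce a Borel map $\psi\colon X'\to X$ with $\psi_*\mu_{X'}=\mu_X$ and $\varphi\circ\psi=\id$ $\mu_{X'}$-a.e.
\begin{itemize}
\item $\varphi$ is injective: if $\varphi(x)=\varphi(y)$, then every $g\in\overline{F_X}$ agrees at $x$ and $y$, so $d_X(x,y)=0$.
\item $\varphi$ is $1$-Lipschitz and isometric, since $d_X=d_{\overline{F_X}}=d_{\overline{F_{X'}}\circ\varphi}$. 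Here we use that pointwise closure does not change $d_F$.
\item So $\varphi$ is an isometric embedding of a complete space. Its image is closed and has full measure, hence equals $X'$ because $\supp\mu_{X'}=X'$.
\end{itemize}
Thus $\varphi$ is a bijective isometry, and $\psi=\varphi^{-1}$ works. Then $f\circ\psi$ is a domination from $X'$, the key step being
\[
F_Y\circ f\circ\psi\subset\overline{F_X}\circ\psi=\overline{F_{X'}}.
\]
Composing with $\varphi$ on the target side is handled similarly. One has to check that $\overline{F}\circ\varphi=\overline{F\circ\varphi}$ when $F\circ f\subset\overline{\cdot}$ and $\varphi$ is a bijective isometry. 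This is clear because pointwise closure commutes with composition by a bijection.

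\emph{Reflexivity and transitivity.} Reflexivity holds with $\id$. For transitivity, take dominations $f\colon X\to Y$ and $g\colon Y\to Z$. Then $(g\circ f)_*\mu_X=\mu_Z$, and
\[
F_Z\circ g\circ f\subset\overline{F_Y}\circ f\subset\overline{F_Y\circ f}\subset\overline{F_X}.
\]
The middle inclusion holds because precomposition with $f$ preserves pointwise limits.

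\emph{Antisymmetry.} This is the main obstacle. Suppose $f\colon X\to Y$ and $g\colon Y\to X$ are dominations. Since $F_Y\circ f\subset\overline{F_X}\subset\lip1(X)$, we get $d_Y(f(x),f(x'))\le d_X(x,x')$, so $f$ is $1$-Lipschitz, and likewise $g$. Then $h=g\circ f\colon X\to X$ is a $1$-Lipschitz map with $h_*\mu_X=\mu_X$.

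The step I would rely on is the standard lemma that such a measure-preserving $1$-Lipschitz self-map of an mm-space with full support is an isometry onto $X$. For the proof I would first restrict to compact sets of large measure and use the classical fact that a non-expanding surjection of a compact metric space is an isometry, transferring through the measure. Alternatively I would quote the mm-space version, as in Shioya's book, for Lipschitz order antisymmetry.

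Granting the lemma, $f$ is an isometry onto its image. The image has full measure and is closed by completeness, so it equals $Y$, using $\supp\mu_Y=Y$ as in Lemma~\ref{thm:suppImage}. Hence $f$ is a bijective isometry with $f_*\mu_X=\mu_Y$, and $F_Y\circ f\subset\overline{F_X}$ gives $\overline{F_Y}\circ f\subset\overline{F_X}$.

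For the reverse inclusion, apply the same to $g$, which gives $\overline{F_X}\circ g\subset\overline{F_Y}$. Now $h=g\circ f$ is a measure-preserving isometry of $X$ and $\overline{F_X}\circ h\subset\overline{F_X}$, and it remains to show equality. I expect this to be the delicate point, since a closed class can strictly contain its image under an automorphism.

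I would try to use compactness of $\overline{F_X}$ modulo constants on compact pieces, via Proposition~\ref{prop:pointwiseEqKf} and Arzelà–Ascoli. First try: the iterates $h^n$ have a subsequence converging pointwise to $\id$ on a full-measure set, by the recurrence of a measure-preserving isometry of a compact group closure. That gives
\[
\overline{F_X}=\overline{F_X}\circ h^{n}\circ h^{-n}\subset\cdots,
\]
so $f\mapsto f\circ h^{n_k}\to f$ pointwise, yielding $\overline{F_X}\subset\overline{\overline{F_X}\circ h}$. Hence $\overline{F_Y}\circ f=\overline{F_X}$ and $f$ is an isomorphism.
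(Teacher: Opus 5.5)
The paper gives no proof of this proposition; it only quotes Hanika--Schneider--Stumme, so your argument has to stand on its own. Most of it does. Your well-definedness argument (an isomorphism is a bijective measure-preserving isometry), reflexivity and transitivity are correct. Your reduction of antisymmetry is also correct: it reduces to (a) $h=g\circ f$ is a surjective isometry of $X$, and (b) $\overline{F_X}\circ h=\overline{F_X}$. You are right that (b) is where the real content lies, since $\overline{F_X}\circ h\subset\overline{F_Y}\circ f\subset\overline{F_X}$ does not by itself force equality.

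\textbf{The gap.} Your only justification for (b) is ``the recurrence of a measure-preserving isometry of a compact group closure.'' You give no reason why the closure of $\{h^n\}$ should be compact. For isometries of a non-compact complete space it need not be: the iterates of a translation of $\R$ have unbounded orbits. So finiteness and full support of $\mu_X$ must be used explicitly. The compactness of $\overline{F_X}$ modulo constants that you propose to use concerns the functions, not the orbits of $h$, and does not supply this. As written, antisymmetry is not proved.

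\textbf{The missing step.} Since $h$ is a bijective isometry with $h_*\mu_X=\mu_X$, it maps $U_X(x;\epsilon)$ onto $U_X(h^n(x);\epsilon)$ with the same measure. Hence $\mu_X(U_X(h^n(x);\epsilon))=\mu_X(U_X(x;\epsilon))>0$ for every $n$. So any $2\epsilon$-separated subset of the orbit $\{h^n(x)\}_{n\ge 1}$ has at most $1/\mu_X(U_X(x;\epsilon))$ elements. Orbits are therefore totally bounded, hence relatively compact because $X$ is complete.

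A diagonal argument over a countable dense set, together with equicontinuity of the $h^n$, gives $n_k\uparrow\infty$ such that $h^{n_k}(x)$ converges for every $x$. Put $m_k:=n_{k+1}-n_k\ge 1$. Since $h^{n_k}$ preserves distances,
\[
d_X(h^{m_k}(x),x)=d_X(h^{n_{k+1}}(x),h^{n_k}(x))\to 0 .
\]
Now take $\phi\in\overline{F_X}$. Iterating $\overline{F_X}\circ h\subset\overline{F_X}$ gives $\phi\circ h^{m_k-1}\in\overline{F_X}$. Also $h^{m_k-1}(x)=h^{m_k}(h^{-1}(x))\to h^{-1}(x)$, so $\phi\circ h^{m_k-1}\to\phi\circ h^{-1}$ pointwise. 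Hence $\overline{F_X}\circ h^{-1}\subset\overline{F_X}$, which is (b). This should replace your chain $\overline{F_X}=\overline{F_X}\circ h^{n}\circ h^{-n}\subset\cdots$, which does not parse.

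\textbf{A lesser point.} Your sketch of (a) via compact sets of large measure does not work as stated, because $h$ need not map such a set into itself. Either cite Shioya's lemma, or argue directly:
\begin{itemize}
\item $\arctan d_X(h(x),h(y))\le\arctan d_X(x,y)$ everywhere.
\item Both sides have the same $\mu_X\otimes\mu_X$-integral, because $(h\times h)_*(\mu_X\otimes\mu_X)=\mu_X\otimes\mu_X$.
\item So equality holds almost everywhere, hence everywhere by continuity and full support of $\mu_X\otimes\mu_X$.
\end{itemize}
Surjectivity then follows from completeness and full support, as you say.
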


\begin{remark}
    From Proposition~\ref{thm:lip1=d}, any mm-space $X$ can be regarded as a geometric data set $(X,\lip1(X),\mu_X)$. We define \begin{align*}
        D(X)  & \coloneqq (X,\lip1(X),\mu_X)\in\D,\ \mathrel{\textrm{for}} X\in\X, \\
        D(\X) & \coloneqq \{D(X)\mid X\in\X\},                                     \\
        \X(X) & \coloneqq (X,d_X,\mu_X)\in\X,\ \mathrel{\textrm{for}} X\in\D.
    \end{align*}
\end{remark}

\begin{proposition}\label{thm:domintationIs1Lip}
    Let $X$ and $Y$ be geometric data sets. Then a domination $p\colon X\to Y$ is 1-Lipschitz continuous.
\end{proposition}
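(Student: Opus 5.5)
Fix $x,y\in X$; the goal is $d_Y(p(x),p(y))\le d_X(x,y)$. I would first establish the inequality for $\mu_X$-almost every pair, or better for all pairs in $\supp\mu_X = X$, and then pass to arbitrary points by continuity.

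Since $d_Y = d_{F_Y}$, it suffices to bound each term: for every $g\in F_Y$ I want $|g(p(x))-g(p(y))|\le d_X(x,y)$. By the definition of domination, $g\circ p\in\overline{F_X}$. Here $\overline{F_X}$ is the closure of $F_X\subset\lip1(X)$ under pointwise convergence, equivalently under $\kf^X$ by Proposition~\ref{prop:pointwiseEqKf}. By the Remark following Proposition~\ref{thm:lip1=d}, $F_X\subset\lip1(X,d_X)$. The set $\lip1(X)$ is closed under pointwise limits, because the inequality $|f(x)-f(y)|\le d_X(x,y)$ passes to pointwise limits. Hence $\overline{F_X}\subset\lip1(X)$, and in particular $g\circ p$ is 1-Lipschitz with respect to $d_X$.

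Thus for all $x,y$ we get $|g(p(x))-g(p(y))|\le d_X(x,y)$. Taking the supremum over $g\in F_Y$ gives $d_Y(p(x),p(y))\le d_X(x,y)$, which is the claim.

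The one subtle point is interpretive. If the closure $\overline{F}$ is understood in the topology of convergence in measure, its elements are only defined $\mu_X$-almost everywhere, so $g\circ p$ would a priori agree with a 1-Lipschitz function only almost everywhere. In that case I would argue as follows. Take $h\in\lip1(X)$ with $g\circ p = h$ $\mu_X$-a.e., which is possible because a $\kf^X$-limit of 1-Lipschitz functions has a 1-Lipschitz representative by Proposition~\ref{prop:pointwiseEqKf}. Then $|g(p(x))-g(p(y))|\le d_X(x,y)$ holds on a full-measure set $E$. Since $\supp\mu_X = X$, the set $E$ is dense. However, extending the inequality from $E$ to all of $X$ requires continuity of $g\circ p$, which is not yet known because $p$ is only Borel.

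To handle this I would use the paper's convention that $\overline{F}$ is the pointwise closure, where the two topologies agree on $\lip1$ by Proposition~\ref{prop:pointwiseEqKf}. With that convention, $g\circ p\in\overline{F_X}$ is a genuine everywhere statement and no almost-everywhere issue arises. I expect this interpretive step to be the only real obstacle; the rest is immediate.
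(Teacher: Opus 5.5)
Your proof is correct and is essentially the paper's argument. The paper also uses $d_Y=d_{F_Y}$ and $g\circ p\in\overline{F_X}$: it approximates $g\circ p$ at the two points $x,y$ by some $f\in F_X$ within $\epsilon$, which is exactly your observation that pointwise limits of 1-Lipschitz functions remain 1-Lipschitz, i.e.\ $\overline{F_X}\subset\lip1(X)$. Your almost-everywhere detour is unnecessary, since the paper's $\overline{F}$ is the closure inside $\lip1(X)$, where the two topologies agree.
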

\begin{proof}
    Take any $x,y\in X$ and any $\varepsilon > 0$. There exists $g\in F_Y$ such that \begin{equation*}
        d_Y(p(x),p(y)) < |g(p(x)) - g(p(y))| + \varepsilon.
    \end{equation*} Since $g\circ p\in \overline{F_X}$, there exists $f\in F_X$ such that \begin{equation*}
        |g\circ p(x) - f(x)| < \varepsilon,\quad |g\circ p(y) - f(y)| < \varepsilon.
    \end{equation*} Therefore, \begin{align*}
        d_Y(p(x),p(y)) & < |g\circ p(x) - g\circ p(y)| + \varepsilon                  \\
                       & < |f(x) - f(y)| + 3\varepsilon \leq d_X(x,y) + 3\varepsilon.
    \end{align*}

    By the arbitrariness of $\varepsilon$, we have $d_Y(p(x),p(y)) \leq d_X(x,y)$. Also, by the arbitrariness of $x$ and $y$, we prove $p\in\lip1(X,Y)$.
\end{proof}

\begin{remark}
    In \cite[Definition 2.10]{shioya2016mmg}, we say that an mm-space $X$ dominates an mm-space $Y$ if there exists $f\in\lip1(X,Y)$ such that $f_*\mu_X = \mu_Y$. In this case, $f$ is a domination from $(X,\lip1(X),\mu_X)$ to $(Y,\lip1(Y),\mu_Y)$. Let $g$ is a domination from a geometric data set $X'$ to a geometric data set $Y'$. Since Proposition~\ref{thm:lip1=d}, $g$ is domination from the mm-space $(X',d_{X'},\mu_{X'})$ to $(Y',d_{Y'},\mu_{Y'})$.

    However, in general, a domination over mm-spaces is not necessarily a domination over geometric data sets. For example, $\id_\ast$ is a domination from the mm-space $(\ast,d_\ast,\delta_\ast)$ to $(\ast,d_\ast,\delta_\ast)$ but not a domination from $\ast_{\{0\}}$ to $\ast_{\{1\}}$.
\end{remark}

\subsection{Observable Diameter}
\begin{definition}[Observable diameter] \label{def:obsdiam} Let $\mu$ be a Borel probability measure and $\kappa\in[0,1]$ a real number. The \emph{$(1-\kappa)$-partial diameter of $\mu$} is defined by\begin{equation*}
        \pd(\mu;1-\kappa) \coloneqq \inf\left\{\diam I\mid I\subset \R\colon \textrm{Borel measurable},\ \mu(I)\geq 1-\kappa\right\}.
    \end{equation*}
    For a geometric data set $X$, the \emph{$\kappa$-observable diameter of $X$} is defined as \begin{equation*}
        \od(X;-\kappa) \coloneqq \sup\left\{\pd(f_*\mu_X;1-\kappa)\mid f\in F\right\}
    \end{equation*}
\end{definition}

\begin{remark}
    The observable diameter of an mm-space $X$ is defined by \begin{equation*}
        \od(X;-\kappa) \coloneqq \sup\left\{\pd(f_*\mu_X;1-\kappa)\mid f\in \lip1(X)\right\}
    \end{equation*} from \cite[Definition 2.13]{shioya2016mmg}. We see that $\od(X;-\kappa) = \od(D(X);-\kappa)$.
\end{remark}

\begin{definition}[Lévy family] A sequence $\{X_n\}_{n=1}^\infty$ of geometric data sets is said to be a \emph{Lévy family} if \begin{equation*}
        \lim_{n\to\infty} \od(X_n;-\kappa) = 0
    \end{equation*} for any real number $\kappa\in (0,1)$.
\end{definition}

\begin{lemma}[{\cite[Lemma 3.1(1)]{ozawa2015limit}}]\label{lem:partDiamParamEneq} Let $\mu$ and $\nu$ be Borel probability measures on $\R$. For any real numbers $\kappa\in(0,1)$ and $\delta > \dpr(\mu,\nu)$, we have\begin{equation*}
        \pd(\mu;1-(\kappa+\delta)) \leq \pd(\nu;1-\kappa) + 2\delta.
    \end{equation*}
\end{lemma}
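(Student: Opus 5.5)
The plan is to work directly from the definitions of the partial diameter and the Prohorov distance: transport an almost-optimal set for $\nu$ to a set for $\mu$ by taking its open $\delta$-neighbourhood. Fix $\kappa\in(0,1)$ and $\delta>\dpr(\mu,\nu)$. If $\pd(\nu;1-\kappa)=+\infty$ there is nothing to prove, so assume it is finite. Fix $\varepsilon>0$ and choose a Borel set $I\subset\R$ with $\nu(I)\geq 1-\kappa$ and $\diam I\leq \pd(\nu;1-\kappa)+\varepsilon$.

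The first step is to check that the Prohorov inequality holds at level exactly $\delta$, not just at levels arbitrarily close to $\dpr(\mu,\nu)$. Let $E$ be the set of $\varepsilon'\geq 0$ with $\mu(U_\R(A;\varepsilon'))\geq\nu(A)-\varepsilon'$ for every Borel $A$. This set is upward closed. Indeed, if $\varepsilon''>\varepsilon'$, then $U_\R(A;\varepsilon'')\supset U_\R(A;\varepsilon')$ and $\nu(A)-\varepsilon''<\nu(A)-\varepsilon'$. Since $\delta>\inf E$, there is some $\varepsilon'\in E$ with $\varepsilon'<\delta$, and hence $\delta\in E$. Applying this with $A=I$ gives
\begin{equation*}
    \mu\bigl(U_\R(I;\delta)\bigr)\geq \nu(I)-\delta\geq 1-(\kappa+\delta).
\end{equation*}

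Next, $U_\R(I;\delta)$ is open, hence Borel. By the triangle inequality, $\diam U_\R(I;\delta)\leq \diam I+2\delta$. So $U_\R(I;\delta)$ is an admissible set in the definition of $\pd(\mu;1-(\kappa+\delta))$, and therefore
\begin{equation*}
    \pd(\mu;1-(\kappa+\delta))\leq \diam I+2\delta\leq \pd(\nu;1-\kappa)+2\delta+\varepsilon.
\end{equation*}
Letting $\varepsilon\to0$ gives the claim.

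There is no real obstacle in this argument. The points needing care are the following.
\begin{enumerate}
    \item The monotonicity step that turns the infimum defining $\dpr$ into an inequality at level exactly $\delta$.
    \item The degenerate case $\kappa+\delta\geq 1$. Here the requirement $\mu(I)\geq 1-(\kappa+\delta)$ is met by a single point (or the empty set), so the left-hand side is $0$ and the inequality is trivial. The main argument covers this case anyway.
    \item The case $I=\emptyset$, which can occur only if $\kappa\geq1$. This is excluded by the hypothesis $\kappa\in(0,1)$, so $I$ is nonempty.
\end{enumerate}
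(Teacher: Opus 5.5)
Your proof is correct. The paper gives no proof of its own and cites Ozawa--Shioya; your argument is exactly the standard one from that source: pick a Borel set $I$ nearly realizing $\pd(\nu;1-\kappa)$, pass to $U_\R(I;\delta)$, and combine $\mu(U_\R(I;\delta))\geq\nu(I)-\delta$ with $\diam U_\R(I;\delta)\leq\diam I+2\delta$.
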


\begin{lemma}[{\cite[Lemma 3.1(2)]{ozawa2015limit}}]\label{thm:partDiamLeftConti} Let $\mu$ be a Borel probability measure on $\R$. The $\alpha$-partial diameter $\pd(\mu;\alpha)$ of $\mu$ is left-continuous with respect to a real number $\alpha\in(0,1)$.
\end{lemma}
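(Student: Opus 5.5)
Since $\pd(\mu;\alpha)$ is nondecreasing in $\alpha$ (a larger $\alpha$ shrinks the admissible family of sets $I$), left-continuity at $\alpha\in(0,1)$ amounts to the following. For every sequence $\alpha_n\uparrow\alpha$ with $\alpha_n\in(0,1)$ we must show
\begin{equation*}
    \pd(\mu;\alpha)\leq D\coloneqq\lim_{n\to\infty}\pd(\mu;\alpha_n).
\end{equation*}
The inequality $D\leq\pd(\mu;\alpha)$ is immediate from monotonicity. Also $D<\infty$: since $\alpha<1$, some compact interval $J$ has $\mu(J)\geq\alpha$, and hence every $\pd(\mu;\alpha_n)\leq\diam J$. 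The plan is a compactness argument on near-optimal intervals.

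First, I reduce to closed intervals. For a Borel set $I$ with $\mu(I)\geq\beta$ and $\diam I<\infty$, the interval $[\inf I,\sup I]$ has the same diameter and at least the same measure. Hence $\pd(\mu;\beta)$ is the infimum of $b-a$ over closed intervals $[a,b]$ with $\mu([a,b])\geq\beta$. For each $n$, I then choose $[a_n,b_n]$ with
\begin{equation*}
    \mu([a_n,b_n])\geq\alpha_n \quad\text{and}\quad b_n-a_n\leq\pd(\mu;\alpha_n)+1/n\leq D+1.
\end{equation*}

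Second, I show that the endpoints stay bounded. Pick a compact interval $K$ with $\mu(\R\setminus K)<\alpha_1$. Every $[a_n,b_n]$ has measure at least $\alpha_1$, so it must meet $K$. Since the lengths are at most $D+1$, all $a_n,b_n$ lie in the $(D+1)$-neighbourhood of $K$, which is bounded. Passing to a subsequence, $a_n\to a$ and $b_n\to b$, with $b-a\leq D$.

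Third, I pass to the limit in the measure condition. Fix $\varepsilon>0$. For all large $n$ in the subsequence, $[a_n,b_n]\subset[a-\varepsilon,b+\varepsilon]$, so $\mu([a-\varepsilon,b+\varepsilon])\geq\alpha_n$. Letting $n\to\infty$ gives $\mu([a-\varepsilon,b+\varepsilon])\geq\alpha$. Letting $\varepsilon\downarrow0$ and using continuity of $\mu$ from above on the decreasing closed intervals gives $\mu([a,b])\geq\alpha$. Therefore $\pd(\mu;\alpha)\leq b-a\leq D$, which completes the argument.

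The only delicate point is the tightness step guaranteeing that the near-optimal intervals do not escape to infinity. This is where $\alpha_n\geq\alpha_1>0$ is used, and it is the reason we work with $\alpha\in(0,1)$ rather than $\alpha=0$.
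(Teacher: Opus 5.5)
Your proof is correct. The paper does not prove this lemma itself; it only quotes it from Ozawa--Shioya, so there is no in-paper route to compare against. Your argument is a complete, self-contained justification: monotonicity gives $\lim_n \pd(\mu;\alpha_n)\le\pd(\mu;\alpha)$, and the reverse inequality follows from three steps:
\begin{itemize}
\item reducing to closed intervals $[\inf I,\sup I]$;
\item using tightness together with $\inf_n\alpha_n>0$ to keep the near-optimal intervals $[a_n,b_n]$ in a bounded set;
\item applying continuity from above along $[a-\varepsilon,b+\varepsilon]\downarrow[a,b]$.
\end{itemize}
That last step, upper semicontinuity of $\mu$ on decreasing closed sets, is exactly why you get left-continuity rather than right-continuity. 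Right-continuity does fail in general: for $\mu=\frac12(\delta_0+\delta_1)$ one has $\pd(\mu;\alpha)=0$ for $\alpha\le\frac12$ and $\pd(\mu;\alpha)=1$ for $\alpha>\frac12$.
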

\begin{proposition} Let $X$ be a geometric data set. The $\kappa$-observable diameter of $\mu$ is right-continuous with respect to a real number $\kappa\in(0,1)$.
\end{proposition}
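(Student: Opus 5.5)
We need to show that $\kappa\mapsto\od(X;-\kappa)$ is right-continuous on $(0,1)$. Fix $\kappa\in(0,1)$ and set $\alpha=1-\kappa$.

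The first step is monotonicity. For each $f\in F_X$, the function $\kappa\mapsto\pd(f_*\mu_X;1-\kappa)$ is non-increasing. Indeed, enlarging $\kappa$ relaxes the constraint $\mu(I)\geq 1-\kappa$, so the infimum can only decrease. Taking the supremum over $f\in F_X$, the map $\kappa\mapsto\od(X;-\kappa)$ is non-increasing as well. Hence for $\kappa'>\kappa$ we have $\od(X;-\kappa')\leq\od(X;-\kappa)$, and
\begin{equation*}
\lim_{\kappa'\downarrow\kappa}\od(X;-\kappa')\leq\od(X;-\kappa).
\end{equation*}
It remains to prove the reverse inequality.

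The second step handles each feature separately. Put $\alpha'=1-\kappa'$; then $\kappa'\downarrow\kappa$ means $\alpha'\uparrow\alpha$ with $\alpha'<\alpha$. By Lemma~\ref{thm:partDiamLeftConti}, applied to the Borel probability measure $f_*\mu_X$ on $\R$ (with $\alpha\in(0,1)$), we get
\begin{equation*}
\pd(f_*\mu_X;1-\kappa')\to\pd(f_*\mu_X;1-\kappa)\quad\text{as }\kappa'\downarrow\kappa.
\end{equation*}

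The third step exchanges the limit with the supremum, which works because a supremum of left-continuous non-decreasing functions is lower semicontinuous from the left. Take any real $c<\od(X;-\kappa)$; this covers the case where $\od(X;-\kappa)=+\infty$. Choose $f\in F_X$ with $\pd(f_*\mu_X;1-\kappa)>c$. By the second step there is $\eta>0$ such that $\pd(f_*\mu_X;1-\kappa')>c$ for all $\kappa'\in(\kappa,\kappa+\eta)$. Consequently $\od(X;-\kappa')>c$ on that interval, so
\begin{equation*}
\lim_{\kappa'\downarrow\kappa}\od(X;-\kappa')\geq c.
\end{equation*}
Letting $c\uparrow\od(X;-\kappa)$ gives the reverse inequality, and combining it with the first step proves right-continuity.

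The only delicate points are the following.
\begin{itemize}
\item The translation of the parameter: right-continuity in $\kappa$ corresponds to left-continuity in $\alpha=1-\kappa$.
\item The lemma requires $\alpha\in(0,1)$, which holds since $\kappa\in(0,1)$.
\item The interchange of limit and supremum goes in the correct direction. Only the lower bound needs the per-feature continuity; the upper bound comes from monotonicity alone.
\end{itemize}
I do not expect a genuine obstacle. The only subtle step is making sure the supremum interchange is used in the direction where it is valid.
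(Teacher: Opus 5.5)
Your proof is correct and takes essentially the same approach as the paper. The upper bound comes from monotonicity of $\kappa\mapsto\od(X;-\kappa)$. The lower bound comes from per-feature left-continuity of the partial diameter (Lemma~\ref{thm:partDiamLeftConti}) combined with the valid direction of the limit--supremum interchange: the paper writes this as $\sup_{f}\lim_{n}\le\limsup_{n}\sup_{f}$ along $\kappa+1/n$, while you spell it out with a threshold $c<\od(X;-\kappa)$.
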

\begin{proof}
    By Lemma~\ref{thm:partDiamLeftConti}, we have\begin{align*}
        \od(X;-\kappa) & = \sup_{f\in F_X} \pd(X;1-\kappa)                               \\
                       & = \sup_{f\in F_X} \lim_{n\to\infty} \pd(X;(1-\kappa)-1/n)       \\
                       & \leq \limsup_{n\to\infty} \sup_{f\in F_X} \pd(X;1-(\kappa+1/n)) \\
                       & =\limsup_{n\to\infty} \od(X;-(\kappa+1/n)).
    \end{align*} From the monotonic non-increasing property of $\od(\mu;-\kappa)$ with respect to $\kappa$, the $\kappa$-observable diameter $\od(\mu;-\kappa)$ of $\mu$ is right-continuous with respect to $\kappa$.
\end{proof}

\subsection{Quotient Geometric Data Set}

\begin{lemma}\label{thm:quotientDistance}
    Let $X$ be a complete separable metric space and $d'$ a pseudometric on $X$. If $d'(x,y) \leq d_X(x,y)$ for any $x,y\in X$, then there exist a complete separable metric space $Y$ and a 1-Lipschitz continuous map $f\colon X\to Y$ such that the following \textup{(1)}, \textup{(2)}, and \textup{(3)} hold.\begin{enumerate}
        \item The image $f(X)$ is tight on $Y$.
        \item For any $x,y\in X$, we have $d_Y(f(x),f(y)) = d'(x,y)$.
        \item Let $Z$ be a metric space and $g\colon X\to Z$ a 1-Lipschitz continuous map. If $d_Z(g(x),g(y)) \leq d'(x,y)$ for any $x,y\in X$, then there exists a unique 1-Lipschitz continuous map $\tilde{g}\colon Y\to Z$ such that $\tilde{g}\circ f = g$. In particular, $Y$ is unique up to isometries.
    \end{enumerate}
\end{lemma}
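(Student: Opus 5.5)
Let $X_0 \coloneqq X/\!\sim$ with $x\sim y \iff d'(x,y)=0$. Then $d'$ descends to a genuine metric on $X_0$, and we take $Y$ to be the metric completion of $(X_0,d')$. Let $f\colon X\to Y$ be the quotient map followed by the canonical isometric embedding $X_0\hookrightarrow Y$. By construction $d_Y(f(x),f(y))=d'(x,y)$, which is (2). Since $d'\le d_X$, $f$ is 1-Lipschitz. The space $Y$ is complete by construction. It is separable because $f(X)$ is dense in $Y$ and is the 1-Lipschitz image of a separable space, so a countable dense subset of $X$ maps onto a countable set that is dense in $f(X)$, hence in $Y$.

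For (1), I read ``tight'' as the statement that every Borel probability measure carried by $f(X)$, i.e.\ every push-forward $f_*\mu$, is tight on $Y$. Since $Y$ is complete and separable, every Borel probability measure on $Y$ is automatically tight (Ulam's theorem), so this holds. Alternatively one can argue directly: for a Borel probability measure $\mu$ on $X$, take a compact $K\subset X$ with $\mu(K)>1-\epsilon$; then $f(K)$ is compact by continuity of $f$, and $f_*\mu(f(K))\ge\mu(K)>1-\epsilon$. If the intended meaning of ``tight'' is some other property of $f(X)$, such as $f(X)$ being dense or $\sigma$-compact relative to the relevant measures, the same density and continuous-image arguments should adapt.

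For (3), let $g\colon X\to Z$ be 1-Lipschitz with $d_Z(g(x),g(y))\le d'(x,y)$. Then $g$ is constant on each $\sim$-class, so it factors through $X_0$ as a map $g_0$ that is 1-Lipschitz with respect to $d'$. We need to extend $g_0$ to the completion $Y$. This is the main obstacle: $Z$ is only assumed to be a metric space, not a complete one, so a 1-Lipschitz map from the dense subset $X_0$ need not extend to $Y$. I would first try to show that limits exist whenever needed. Otherwise I would assume $Z$ is complete, which is the case in every application in the paper (e.g.\ $Z=\R$ or a complete separable metric space), and then extend by $\tilde g(\lim f(x_n))\coloneqq\lim g(x_n)$. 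The sequence $(g(x_n))$ is Cauchy because $(f(x_n))$ is Cauchy and $g_0$ is 1-Lipschitz, and the limit is independent of the chosen sequence by the same estimate. The extension is 1-Lipschitz by passing to the limit in the inequality. Uniqueness follows because any two continuous maps agreeing on the dense set $f(X)$ coincide.

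Uniqueness of $Y$ up to isometry is the standard universal-property argument. Given two pairs $(Y,f)$ and $(Y',f')$ satisfying (2) and (3), property (3) applied to each produces 1-Lipschitz maps $Y\to Y'$ and $Y'\to Y$. Their composites fix the dense images $f(X)$ and $f'(X)$, so by the uniqueness part of (3) they are the identity maps. Hence the maps are mutually inverse 1-Lipschitz bijections, i.e.\ isometries.
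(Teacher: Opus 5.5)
Your construction and argument match the paper's: pass to $X/\mathord{\sim}$, take the completion, extend $g$ along the dense image, and get uniqueness from continuity on $f(X)$. Two remarks follow.

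First, ``tight'' in (1) means ``dense''. The paper calls (1) obvious because $f(X)$ is the image of $X/\mathord{\sim}$ in its completion. It later uses (1) to get $\supp f_*\mu_X=\overline{f(X)}=Y$ via Lemma~\ref{thm:suppImage}, and to extend identities and inequalities from $f(X)$ to $Y$. You already proved density in your separability step, so the Ulam/measure-tightness discussion is not needed.

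Second, your worry about (3) is well founded. It cannot be removed by ``showing that limits exist'', because (3) is false when $Z$ is incomplete. Take $X=\{1,2,3,\ldots\}$ with $d_X(m,n)=1$ for $m\neq n$, and $d'(m,n)=|1/m-1/n|\le d_X(m,n)$. Let $Z=\{1/n\mid n=1,2,\ldots\}\subset\R$ with the usual metric, and $g(n)=1/n$. Then $Y$ is isometric to $Z\cup\{0\}$ with $f(n)=1/n$. A continuous $\tilde g$ with $\tilde g\circ f=g$ would have to send $0$ to a limit of $1/n$ inside $Z$, and no such limit exists.

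The paper's proof uses the same hidden hypothesis when it appeals to the universal property of the completion. In every application the target is complete: $Z'=\R$, or $Z$ a geometric data set. So assuming $Z$ complete, as you propose, is the right correction, and with it your extension argument is complete.
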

\begin{proof}
    Let us define $Y$ and $f$. We define an equivalence relation $\mathord{\sim}$ on $X$ by saying that $x\sim y$ if and only if $d'(x,y)=0$. Let $p\colon X\to X/\mathord{\sim}$ be the natural projection and
    \begin{equation*}
        \tilde{d}(p(x),p(y)) \coloneqq d'(x,y) \text{ for all } x,y\in X.
    \end{equation*}
    Then, $(X/\mathord{\sim},\tilde{d})$ is a well-defined metric space, and $p$ is 1-Lipschitz continuous. Let $Y$ be the completion of $(X/\mathord{\sim},\tilde{d})$, $q\colon X/\mathord{\sim}\to Y$ the natural distance preserving map, and $f \coloneqq q\circ p$. We see that $f$ is 1-Lipschitz continuous.

    Since $f(X) = q\circ p(X) = q(X/\mathord{\sim})$, (1) is obvious. In particular, $Y$ is separable. Since $q$ is a distance preserving map, (2) is also clear. We prove (3). Let Z be a metric space and $g\colon X\to Z$ a 1-Lipschitz continuous map. We assume that
    \begin{equation*}
        d_Z(g(x),g(y)) \leq d'(x,y) \text{ for any } x,y\in X.
    \end{equation*}
    We define
    \begin{equation*}
        g'\colon X/\mathord{\sim} \to Z \text{ by } g'(p(x)) \coloneqq g(x) \text{ for } x\in X.
    \end{equation*}
    Since
    \begin{equation*}
        d_Z(g(x),g(y)) \leq d'(x,y) = 0 \text{ for any } x,y\in X \text{ with } x\sim y,
    \end{equation*}
    the well-definedness of $g'$ holds. From the definition by $g'$, we have the continuity of $g'$. By the universal property of the completion $Y$, there exists a unique continuous map $\tilde{g}\colon Y\to Z$ satisfying $\tilde{g}\circ q = g'$. Since $g'\circ p = g$, we have $\tilde{g}\circ f= g$. Let us show the 1-Lipschitz continuity of $\tilde{g}$. For all $x,y\in X$, we have \begin{align*}
        d_Z(\tilde{g}(f(x)),\tilde{g}(f(y))) & = d_Z(g(x),g(y)) \leq d'(x,y) = d_Y(f(x),f(y)).
    \end{align*} By (1), this completes the proof of the 1-Lipschitz continuity of $\tilde{g}$. Finally, we prove the uniqueness of $\tilde{g}$. Take any maps $h,k\colon Y\to Z$ that $h\circ f = k\circ f$, that is, $h\circ q\circ p = k\circ q\circ p$. By the projectivity of $p$, we have $h\circ q = k\circ q$. From the universal property of $Y$, we see $k=h$.
\end{proof}

\begin{definition}[Quotient geometric data set, quotient domination]
    Let $X$ be a geometric data set and $G\subset\overline{F_X}$ a subfamily. A geometric data set $Y$ is a \emph{quotient geometric data set} of $X$ by $G$ and a map $f\colon X\to Y$ is a \emph{quotient domination} if the following conditions \textup{(1)} and \textup{(2)} are satisfied:
    \begin{enumerate}
        \item The equality $F_Y\circ f = G$ holds.
        \item For any geometric data set $Z$ and any domination $g\colon X\to Z$, if $F_Z\circ g\subset\overline{G}$, then there exists a unique domination $\tilde{g}\colon Y\to Z$ such that $\tilde{g}\circ f = g$.
    \end{enumerate}
    By definition, if a quotient geometric data set of $X$ by $G$ exists, it is unique up to isomorphism, so we denote it by $X/G$.

    Moreover, for a geometric data set $X$ and $G \subset Lip_1(X)$, even if $G \not\subset F_X$, we also define $X/G$ as $(X,Lip_1(X),\mu_X)/G$. In this case, we still refer to it as a quotient geometric data set; however, the quotient domination is not defined.
\end{definition}
\begin{proposition}\label{thm:QuotientSpaceExist}
    Let $X$ be a geometric data set. For any subset $G\subset F_X$, there exist a quotient geometric data set $Y$ of $X$ by $G$ and a quotient domination $f\colon X\to Y$.
\end{proposition}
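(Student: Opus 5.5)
The plan is to build $Y$ from Lemma~\ref{thm:quotientDistance}, applied to the pseudometric induced by $G$. Set
\begin{equation*}
    d'(x,y) \coloneqq d_G(x,y) = \sup_{g\in G} |g(x)-g(y)|, \qquad x,y\in X .
\end{equation*}
Since $G\subset F_X$, we have $d'\le d_X$, so $d'$ is a finite pseudometric dominated by $d_X$. Lemma~\ref{thm:quotientDistance} then gives a complete separable metric space $(Y,d_Y)$ and a 1-Lipschitz map $f\colon X\to Y$ with dense image and $d_Y(f(x),f(y))=d'(x,y)$.

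Next I define the features on $Y$. Each $g\in G$ satisfies $|g(x)-g(y)|\le d'(x,y)$, so by Lemma~\ref{thm:quotientDistance}(3), applied with $Z=\R$, there is a unique 1-Lipschitz $\tilde g\colon Y\to\R$ with $\tilde g\circ f=g$. Put $F_Y\coloneqq\{\tilde g\mid g\in G\}$; then $F_Y\circ f=G$, which is condition (1).

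I must check that $d_{F_Y}=d_Y$. On the dense set $f(X)$ this is $d_{F_Y}(f(x),f(y))=d_G(x,y)=d_Y(f(x),f(y))$. For general points, every $\tilde g$ is 1-Lipschitz, so $d_{F_Y}$ is continuous with respect to $d_Y$. Approximating from $f(X)$ then gives $d_{F_Y}=d_Y$ everywhere. Set $\mu_Y\coloneqq f_*\mu_X$, which is Borel since $f$ is continuous. By Lemma~\ref{thm:suppImage}, $\supp\mu_Y=\overline{f(X)}=Y$, using $\supp\mu_X=X$. Hence $(Y,F_Y,\mu_Y)$ is a geometric data set and $f$ is a domination.

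For the universal property (2), take a domination $h\colon X\to Z$ with $F_Z\circ h\subset\overline G$. First I show $d_Z(h(x),h(y))\le d'(x,y)$. This is the argument of Proposition~\ref{thm:domintationIs1Lip}, with $\overline{F_X}$ replaced by $\overline G$: for $k\in F_Z$, the function $k\circ h$ is a pointwise limit of elements of $G$, so $|k(h(x))-k(h(y))|\le d_G(x,y)$. Taking the supremum over $k$ gives the inequality. Lemma~\ref{thm:quotientDistance}(3) then yields a unique 1-Lipschitz $\tilde h\colon Y\to Z$ with $\tilde h\circ f=h$. Uniqueness among dominations follows since dominations are 1-Lipschitz, hence continuous, and $f(X)$ is dense.

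It remains to show that $\tilde h$ is a domination. The measure condition is immediate: $\tilde h_*\mu_Y=h_*\mu_X=\mu_Z$. The main obstacle is $F_Z\circ\tilde h\subset\overline{F_Y}$. Fix $k\in F_Z$. Since $k\circ h\in\overline G$, there is a net $g_\alpha\in G$ converging pointwise to $k\circ h$; I would use Proposition~\ref{prop:pointwiseEqKf} to pass to a sequence in the Ky Fan metric. Then $\tilde g_\alpha\to k\circ\tilde h$ pointwise on $f(X)$. All these functions are 1-Lipschitz on $Y$ and $f(X)$ is dense, so an $\varepsilon/3$ argument extends the pointwise convergence to all of $Y$. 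Thus $k\circ\tilde h\in\overline{F_Y}$, which completes the proof.
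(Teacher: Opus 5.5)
Your proof is correct and follows essentially the same route as the paper. Both apply Lemma~\ref{thm:quotientDistance} to $d_G$, take $F_Y$ to be the unique 1-Lipschitz extensions of elements of $G$ (your $\{\tilde g\mid g\in G\}$ coincides with the paper's $\{h\in\operatorname{Lip}_1(Y)\mid h\circ f\in G\}$ by the uniqueness in part (3)), set $\mu_Y=f_*\mu_X$, and derive the universal property from part (3). Your explicit check of $d_Z(h(x),h(y))\le d_G(x,y)$ and your $\varepsilon/3$ density argument for $F_Z\circ\tilde h\subset\overline{F_Y}$ just spell out steps the paper compresses into the identity $\overline{F_Y\circ f}=\overline{F_Y}\circ f$ and an appeal to the universal property of $f$.
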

\begin{proof}
    By Lemma~\ref{thm:quotientDistance}, there exist a complete separable metric space $Y$ and a 1-Lipschitz continuous map $f\colon X\to Y$ such that the following (a), (b), and (c) hold. \begin{enumerate}
        \renewcommand{\labelenumi}{(\alph{enumi})}
        \item The image $f(X)$ is tight on $Y$.
        \item For any $x,y\in X$, we have $d_Y(f(x),f(y)) = d_G(x,y)$.
        \item Let $Z'$ be a metric space and $g\colon X\to Z'$ a 1-Lipschitz continuous map. If $d_{Z'}(g(x),g(y)) \leq d_G(x,y)$ for any $x,y\in X$, then there exists a unique 1-Lipschitz continuous map $\tilde{g}\colon Y\to Z'$ such that $\tilde{g}\circ f = g$.
    \end{enumerate}
    We set
    \begin{equation*}
        F_Y \coloneqq \{h\in \lip1(Y)\mid h\circ f\in G\} \text{ and } \mu_Y \coloneqq f_*\mu_X.
    \end{equation*}
    The conditinon (c) for $Z'=\R$ implies (1). In particular, we see that $F_Y\circ f \subset \overline{F_X}$. By Lemma~\ref{thm:suppImage} and (a), we have $\supp \mu_Y = Y$. Let us prove $d_Y=d_{F_Y}$. Take any $x,y\in X$. We have \begin{align*}
        d_Y(f(x),f(y)) & = d_G(x,y) = \sup_{g\in G} |g(x)-g(y)|                            \\
                       & = \sup_{h\in F_Y} |h\circ f(x)-h\circ f(y)| = d_{F_Y}(f(x),f(y)).
    \end{align*} By (a), we obtain $d_Y=d_{F_Y}$. In particular, $Y$ is a geometric data set, and $f$ is a domination.

    We prove (2). Let $Z$ be a geometric data set and $g\colon X\to Z$ a domination such that $F_Z\circ g\subset \overline{G}$. By applying (c) for $Z'=Z$, there exists a unique 1-Lipschitz continuous map $\tilde{g}\colon Y\to Z$ such that $\tilde{g}\circ f = g$. Since \begin{equation*}
        F_Z\circ \tilde{g}\circ f = F_Z\circ g \subset \overline{G} = \overline{F_Y\circ f} = \overline{F_Y}\circ f
    \end{equation*} and given the universal property of $f$, it follows that $F_Z\circ \tilde{g} \subset \overline{F_Y}$. Since
    \begin{equation*}
        \mu_Z = g_*\mu_X = \tilde{g}_*f_*\mu_X = \tilde{g}_*\mu_Y,
    \end{equation*}
    we find that $\tilde{g}$ is a domination. The uniqueness of $\tilde{g}$ is obvious since Proposition~\ref{thm:domintationIs1Lip} and (c).
\end{proof}

\section{Observable Distance}
\subsection{Definition and Characterization of Observable Distance}

\begin{definition}[Parameter]
    We set $I\coloneqq [0,1]$ and denote by $\lambda$ the one-dimentional Lebesgue measure on $I$. Let $(X,\mu)$ be a Borel probability measure space. A Borel measurable map $\phi\colon I\to X$ is called a \emph{parameter} of $\mu$ if $\phi_*\lambda = \mu$.
\end{definition}

\begin{definition}[Observable distance]
    Let $X$ and $Y$ be two geometric data sets. We define the \emph{observable distance} $\dconc(X,Y)$ between $X$ and $Y$ as the infimum of
    $\hausp{\kf^\lambda}(F_X\circ\phi,F_Y\circ\psi)$,
    where $\phi$ and $\psi$ run over all parameters of $\mu_X$ and $\mu_Y$, respectively.
\end{definition}

\begin{remark}
    In \cite{gromov2007met}, the observable distance $\dconc(X,Y)$ between two mm-space $X$ and $Y$ is defined as the infimum of $\hausp{\kf^\lambda}(\lip1(X)\circ\phi,\lip1(Y)\circ\psi)$, where $\phi$ and $\psi$ run over all parameters of $\mu_X$ and $\mu_Y$, respectively. We see that $\dconc(X,Y) = \dconc(D(X),D(Y))$ for any $X,Y\in\X$.
\end{remark}

\begin{definition}[Concentrate, concentration topology]
    Let $X_n$, $n=1,2,\ldots$, and $X$ be geometric data sets. We say that the sequence $\{X_n\}_{n=1}^\infty$ \emph{concentrates} to $X$ if \begin{equation*}
        \lim_{n\to\infty} \dconc(X_n,X) = 0.
    \end{equation*}
    If $\{X_n\}_{n=1}^\infty$ concentrates to X, we write $X_n\xrightarrow{\conc}X$. We call the topology introduced by $\dconc$ on $\D$ the \emph{concentration topology}.
\end{definition}

\begin{remark}
    We could consider the different topology induced from a pseudometric defined as $\dconc(\X(X),\X(Y))$ for any $X,Y\in \D$. This topology is not finer than the concentration topology since \begin{equation*}
        \dconc(\X(\ast_{\{0\}}),\X(\ast_{\{1\}})) = \dconc(\ast,\ast) = 0 < 1 = \dconc(\ast_{\{0\}},\ast_{\{1\}}),
    \end{equation*} where $d_\ast\coloneqq 0$ and $\mu_\ast$ is the Dirac measure. Moreover, this topology does not coincide with the concentration topology. Indeed, there exists the following example. For $N=1,2,\ldots$, we defined N-points discrete geometric data set by \begin{equation*}
        X_N \coloneqq \left(\{1,\ldots,N\},\{d^N_m\mid m=1,2,\ldots,N\},\mu_N\right),
    \end{equation*} where $\mu_N$ is the normalized counting measure, and \begin{equation*}
        d^N_m(n) \coloneqq \begin{cases}
            0 & \mathrel{\mathrm{if}} n=m,                    \\
            1 & \mathrel{\mathrm{if}} n\neq m, 1\leq n\leq N.
        \end{cases}
    \end{equation*} Let us evaluate the observable distance between $X_N$ and $\ast_{\{1\}}$. We set the parameters $\phi_N\colon I\to X_n$ and $\psi\colon I\to\ast$ by\begin{equation*}
        \phi_N(t) \coloneqq \frac{\lfloor N\cdot t\rfloor}{N} + 1,\quad \psi(t) \coloneqq \ast
    \end{equation*} for $N=1,2,\ldots$, where $\lfloor x\rfloor$ is the maximal integer of less than or equal to $x$. Since \begin{align*}
        \kf^\lambda(d^N_m\circ\phi_N,1\circ\psi) & \leq \lambda\left(\{t\in I\mid d^N_m\circ\phi_N(t) \neq 1\circ\psi(t)\} \right) \\
                                                 & \leq \lambda\left(\left(\frac{m-1}{N},\frac{m}{N}\right]\right) = \frac{1}{N}
    \end{align*} for $m=1,2,\ldots$, we have \begin{equation*}
        \dconc(X_N,\ast_{\{1\}}) \leq \hausp{\kf^\lambda}(F_{X_N}\circ\phi_N,\{1\}\circ\psi) \leq \frac{1}{N} \to 0 \mathrel{\textrm{as}} N\to\infty.
    \end{equation*} However, $\{\X(X_N)\}_{N=1}^\infty$ does not concentrate to $\X(\ast_{\{1\}})$. For $N=1,2,\ldots$, we define $h_N\in\lip1(X_{2N})$ by\begin{equation*}
        h_N(n) \coloneqq \begin{cases}
            0 & \mathrel{\mathrm{if}} 1\leq n\leq N, \\
            1 & \mathrel{\mathrm{if}} N< n\leq 2N.
        \end{cases}
    \end{equation*} Since $\kf^{X_{2N}}(h_N,x) \geq 1/2$ for any $x\in \R$, we have \begin{equation*}
        \dconc(\X(X_{2N}),\X(\ast_{\{1\}})) \geq \frac{1}{2}.
    \end{equation*}
\end{remark}

\begin{remark}
    The metric space $(\D,\dconc)$ is not separable. In fact, for any subset $\eta\subset 2^\N$, the only existing parameter for $\ast_\eta$ is obtained as $\phi(t) \coloneqq \ast$. For any $\xi \subset 2^\N$ different from $\eta$, we have \begin{align*}
        \dconc(\ast_\eta,\ast_\xi) & = \hausp{\kf^\lambda}(\{\ast\mapsto n\mid n\in\eta\}\circ\phi,\{\ast\mapsto n\mid n\in\xi\}\circ\phi) \\
                                   & = \max\left\{\hausp{d_\R}(\eta,\xi),1\right\} = 1.
    \end{align*} The uncountability of the set $2^\N$ of all subsets of $\N$ implies that $(\D,\dconc)$ is not separable.
\end{remark}

It is known that the observable distance between mm-spaces has some good properties \cite{nakajima2022coupling}. Let us prove that the same properties of the observable distances between geometric data sets. We use several lemmas.

\begin{lemma}[{\cite[Lemma 4.2]{nakajima2022coupling}}]\label{lem:parameterEqualCoupling} For mm-spaces $X, Y$, we have
    \begin{equation*}
        \Pi(\mu_X,\mu_Y) = \left\{(\phi,\psi)_*\lambda \mid \phi \text{ is parameter of $\mu_X$}, \psi \text{ is parameter of $\mu_Y$}\right\}
    \end{equation*}
\end{lemma}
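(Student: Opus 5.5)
The inclusion ``$\supset$'' is immediate, and for ``$\subset$'' I will realize an arbitrary coupling as the image of $\lambda$ under a single Borel map $I\to X\times Y$ and then split that map into its two components.

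For ``$\supset$'', let $\phi$ and $\psi$ be parameters of $\mu_X$ and $\mu_Y$. Then $(\phi,\psi)\colon I\to X\times Y$ is Borel measurable, because $X\times Y$ is separable and so its Borel $\sigma$-algebra is the product $\sigma$-algebra. Its marginals are
\begin{equation*}
    (\pr_1)_*(\phi,\psi)_*\lambda = \phi_*\lambda = \mu_X,\qquad (\pr_2)_*(\phi,\psi)_*\lambda = \psi_*\lambda = \mu_Y,
\end{equation*}
so $(\phi,\psi)_*\lambda\in\Pi(\mu_X,\mu_Y)$.

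For ``$\subset$'', take $\pi\in\Pi(\mu_X,\mu_Y)$. It suffices to find a Borel map $\Phi\colon I\to X\times Y$ with $\Phi_*\lambda=\pi$. Then $\phi\coloneqq\pr_1\circ\Phi$ and $\psi\coloneqq\pr_2\circ\Phi$ are Borel, and $\phi_*\lambda=(\pr_1)_*\pi=\mu_X$ and likewise $\psi_*\lambda=\mu_Y$. So they are parameters, and $(\phi,\psi)_*\lambda=\Phi_*\lambda=\pi$. Everything therefore reduces to the claim that every Borel probability measure on the complete separable metric space $X\times Y$ (with, e.g., $d_\infty$) admits a parameter.

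I would prove this claim in three steps.
\begin{enumerate}
    \item By Kuratowski's Borel isomorphism theorem, there is a Borel injection $h\colon X\times Y\to\R$ whose image $B\coloneqq h(X\times Y)$ is Borel and whose inverse $h^{-1}\colon B\to X\times Y$ is Borel. Set $\nu\coloneqq h_*\pi$, a Borel probability measure on $\R$ with $\nu(B)=1$.
    \item Let $F(s)\coloneqq\nu((-\infty,s])$ and let $G(t)\coloneqq\inf\{s\in\R\mid F(s)\geq t\}$ be the quantile function for $t\in(0,1)$. Then $G$ is monotone, hence Borel. The standard equivalence $G(t)\leq s\iff t\leq F(s)$ gives $G_*\lambda=\nu$.
    \item Since $\lambda(G^{-1}(\R\setminus B))=\nu(\R\setminus B)=0$, I redefine $G$ on the Borel null set $G^{-1}(\R\setminus B)\cup\{0,1\}$ to be a fixed point of $B$. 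This keeps $G$ Borel, keeps $G_*\lambda=\nu$, and gives $G(I)\subset B$. Then $\Phi\coloneqq h^{-1}\circ G$ is Borel and satisfies $\Phi_*\lambda=h^{-1}_*\nu=\pi$.
\end{enumerate}

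The main obstacle is this existence of a parameter for an arbitrary Borel probability measure on a Polish space, since the rest is bookkeeping with marginals. I handle it by the Borel-isomorphism-plus-quantile argument above, citing Kuratowski's theorem rather than reproving it. Care is needed only to keep every map Borel and to stay inside the image $B$, which the null-set modification in step (3) takes care of.
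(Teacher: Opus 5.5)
Your proof is correct. The paper gives no argument of its own for this lemma; it simply cites Nakajima. Your route is the standard one behind that cited result: produce a parameter $\Phi$ of $\pi$ on the complete separable space $X\times Y$ via Borel isomorphism and the quantile transform, then set $\phi=\pr_1\circ\Phi$ and $\psi=\pr_2\circ\Phi$.

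You also rightly prove existence of a parameter for an arbitrary Borel probability measure, not only for mm-spaces. This matters because $\pi$ need not have full support on $X\times Y$; the alternative would be to pass to $\supp\pi$ first.
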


\begin{lemma}[{\cite[Lemma 5.3]{nakajima2022coupling}}]\label{lem:parameterToCouplingByFunction} For mm-spaces $X,Y$, parameters $\phi$ of $X$, $\psi$ of $Y$, and 1-Lipschitz functions $f\in\lip1(X),g\in\lip1(Y)$, we have \begin{equation*}
        \kf^{(\phi,\psi)_*\lambda}(f\circ\pr_1,g\circ\pr_2) = \kf^\lambda(f\circ\phi,g\circ\psi).
    \end{equation*}
\end{lemma}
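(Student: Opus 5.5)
We must prove Lemma~\ref{lem:parameterToCouplingByFunction}: for $\pi \coloneqq (\phi,\psi)_*\lambda$,
\begin{equation*}
    \kf^{\pi}(f\circ\pr_1,g\circ\pr_2) = \kf^\lambda(f\circ\phi,g\circ\psi).
\end{equation*}
The plan is to show that the two sets of admissible $\epsilon$ in the definitions of the Ky Fan metrics coincide. Both infima are then taken over the same set, so they are equal.

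First, measurability. Since $f$, $g$ and the projections are continuous, and $\phi$, $\psi$ are Borel, the following maps are Borel:
\begin{equation*}
    h \coloneqq |f\circ\pr_1 - g\circ\pr_2|\colon X\times Y\to\R, \qquad (\phi,\psi)\colon I\to X\times Y.
\end{equation*}
For the latter we use that the Borel $\sigma$-algebra of $X\times Y$ is the product $\sigma$-algebra, which holds because $X$ and $Y$ are separable metric spaces. The composition $h\circ(\phi,\psi) = |f\circ\phi - g\circ\psi|$ is therefore Borel on $I$.

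Next, fix $\epsilon\geq 0$ and set $A_\epsilon \coloneqq \{(x,y)\in X\times Y\mid |f(x)-g(y)|>\epsilon\} = h^{-1}((\epsilon,\infty))$, which is Borel. By the definition of push-forward,
\begin{equation*}
    \pi(A_\epsilon) = \lambda\bigl((\phi,\psi)^{-1}(A_\epsilon)\bigr) = \lambda\bigl(\{t\in I\mid |f(\phi(t))-g(\psi(t))|>\epsilon\}\bigr).
\end{equation*}
Hence $\pi(A_\epsilon)\leq\epsilon$ holds exactly when $\lambda(\{t\mid |f\circ\phi(t)-g\circ\psi(t)|>\epsilon\})\leq\epsilon$. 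This gives the coincidence of the admissible sets and proves the equality.

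Lemma~\ref{lem:parameterEqualCoupling} is not needed here; it only confirms that $\pi$ is a coupling. The only potential obstacle is the Borel measurability of $(\phi,\psi)$ as a map into the product, and separability settles it. The $1$-Lipschitz hypothesis is used only to guarantee continuity, hence Borel measurability, of $f$ and $g$.
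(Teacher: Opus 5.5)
Your proposal is correct: the push-forward identity $\pi(A_\epsilon)=\lambda(\{t\in I\mid |f\circ\phi(t)-g\circ\psi(t)|>\epsilon\})$ makes the sets of admissible $\epsilon$ in the two Ky Fan infima identical, and the only point needing care is the Borel measurability of $(\phi,\psi)$, which your separability remark settles. The paper gives no proof of its own and simply cites Nakajima, but your direct argument is the natural one, and the paper itself uses the same push-forward computation in its proof of Lemma~\ref{lem:dconcOfDominatedSpace}.
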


By Lemma~\ref{lem:parameterToCouplingByFunction}, the following is clear.
\begin{corollary}[{\cite[Lemma 5.4]{nakajima2022coupling}}]For mm-spaces $X,Y$, and parameters $\phi$ of $X$, $\psi$ of $Y$, we have \begin{equation*}
        \hausp{\kf^{(\phi,\psi)_*\lambda}}(F_X\circ\pr_1,F_Y\circ\pr_2) = \hausp{\kf^\lambda}(F_X\circ\phi,F_Y\circ\psi).
    \end{equation*}
\end{corollary}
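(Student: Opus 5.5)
The corollary follows from Lemma~\ref{lem:parameterToCouplingByFunction}, applied pointwise to each pair of features. Write $\pi\coloneqq(\phi,\psi)_*\lambda$. Consider the map $F_X\circ\pr_1 \to F_X\circ\phi$, $f\circ\pr_1\mapsto f\circ\phi$, and the analogous map for $Y$. By Lemma~\ref{lem:parameterToCouplingByFunction}, these two maps together preserve every cross distance:
\begin{equation*}
    \kf^{\pi}(f\circ\pr_1,g\circ\pr_2) = \kf^\lambda(f\circ\phi,g\circ\psi)
\end{equation*}
for all $f\in F_X$ and $g\in F_Y$. This applies because $F_X\subset\lip1(X)$ and $F_Y\subset\lip1(Y)$, by the remark following Proposition~\ref{thm:lip1=d}.

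Next I expand the Hausdorff distance in terms of these cross distances. For two subsets $A,B$ of a (pseudo)metric space,
\begin{equation*}
    \hausp{d}(A,B)=\max\Bigl\{\sup_{a\in A}\inf_{b\in B} d(a,b),\ \sup_{b\in B}\inf_{a\in A} d(a,b)\Bigr\},
\end{equation*}
which involves only cross distances $d(a,b)$ with $a\in A$, $b\in B$. On the left I index the sup and inf over $f\in F_X$ and $g\in F_Y$ through $f\circ\pr_1$ and $g\circ\pr_2$. On the right I index them over the same $f$ and $g$ through $f\circ\phi$ and $g\circ\psi$. Substituting the displayed identity term by term then makes the two expressions for the Hausdorff distance coincide.

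I need two small checks. First, the indexing maps need not be injective, but this is harmless: a sup or inf over an indexed family equals the one over its image set. Second, $\kf^\pi$ is a pseudometric on $\pi$-measurable functions on $X\times Y$, so the Hausdorff distance is well defined there in the extended sense; the same holds for $\kf^\lambda$ on functions on $I$.

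Nothing here is a genuine obstacle; the argument is routine bookkeeping once Lemma~\ref{lem:parameterToCouplingByFunction} is available.
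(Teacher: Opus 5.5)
Your proposal is correct and is the paper's argument: the paper just declares the corollary clear from Lemma~\ref{lem:parameterToCouplingByFunction}, and you write out the bookkeeping that the Hausdorff distance depends only on the cross distances $\kf(a,b)$ with $a\in A$, $b\in B$, which the lemma matches term by term. Your side checks (non-injective indexing, and that features are $1$-Lipschitz) are the right ones, though the Lipschitz hypothesis is not actually needed, since the lemma is just the change of variables $\lambda(\{t\mid |f(\phi(t))-g(\psi(t))|>\epsilon\}) = (\phi,\psi)_*\lambda(\{(x,y)\mid |f(x)-g(y)|>\epsilon\})$.
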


\begin{lemma}[{\cite[Lemma 5.6]{nakajima2022coupling}}]\label{lem:KFDistanceLessThanDp} Let $X,Y$ be metric spaces, $\mu,\nu$ Borel probability measures, and 1-Lipschitz functions $f,g\in\lip1(X,Y)$. Then we have \begin{equation*}
        |\kf^\mu(f,g) - \kf^\nu(f,g)| \leq 2\dpr(\mu,\nu).
    \end{equation*}
\end{lemma}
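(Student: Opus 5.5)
We must prove Lemma~\ref{lem:KFDistanceLessThanDp}: $|\kf^\mu(f,g)-\kf^\nu(f,g)|\le 2\dpr(\mu,\nu)$ for $f,g\in\lip1(X,Y)$. By symmetry it suffices to show $\kf^\nu(f,g)\le \kf^\mu(f,g)+2\dpr(\mu,\nu)$.

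Fix $\epsilon>\kf^\mu(f,g)$ and $\delta>\dpr(\mu,\nu)$. Since $\kf^\mu$ is an infimum over a monotone condition, $\mu(A)\le\epsilon$ where
\begin{equation*}
A\coloneqq\{x\in X\mid d_Y(f(x),g(x))>\epsilon\}.
\end{equation*}
The key geometric observation uses only the 1-Lipschitz property. The function $h(x)\coloneqq d_Y(f(x),g(x))$ is 2-Lipschitz, because
\begin{equation*}
|h(x)-h(x')|\le d_Y(f(x),f(x'))+d_Y(g(x),g(x'))\le 2d_X(x,x').
\end{equation*}
Set $B\coloneqq\{x\mid h(x)>\epsilon+2\delta\}$. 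If $x\in B$ and $d_X(x,x')<\delta$, then $h(x')>\epsilon$, so $U_X(B;\delta)\subset A$.

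Next apply the Prohorov inequality with the roles arranged so that $\nu$ is bounded by $\mu$ of a neighbourhood. Since $\delta>\dpr(\mu,\nu)$ and the Prohorov distance is symmetric (or, if only one direction is used in the definition, we invoke the standard symmetry), we get $\nu(B)\le \mu(U_X(B;\delta))+\delta$. Hence
\begin{equation*}
\nu(B)\le\mu(A)+\delta\le\epsilon+\delta\le \epsilon+2\delta.
\end{equation*}
Therefore $\kf^\nu(f,g)\le\epsilon+2\delta$, and letting $\epsilon\downarrow\kf^\mu(f,g)$ and $\delta\downarrow\dpr(\mu,\nu)$ gives the claim.

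The only delicate points are measurability and the symmetry of $\dpr$. The sets $A$ and $B$ are open, since $h$ is continuous, so they are Borel. The symmetry of $\dpr$ for probability measures is standard: apply the defining inequality to the complement of $U(B;\delta)$. The rest is routine.
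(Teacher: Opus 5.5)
Your proof is correct. The paper gives no proof of this lemma; it is quoted from Nakajima's Lemma 5.6. Your argument is the standard proof of it: the function $h=d_Y(f,g)$ is $2$-Lipschitz, hence $U_X(\{h>\epsilon+2\delta\};\delta)\subset\{h>\epsilon\}$, and one application of the Prohorov inequality finishes.

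One bookkeeping remark concerns where symmetry of $\dpr$ enters. With the paper's definition of $\dpr(\mu,\nu)$, the inequality $\nu(B)\le\mu(U_X(B;\delta))+\delta$ for $\delta>\dpr(\mu,\nu)$ is exactly the defining condition, which is monotone in $\delta$. So symmetry is not needed at that step. It is needed only in your opening reduction, where swapping $\mu$ and $\nu$ produces $\dpr(\nu,\mu)$. Your complement argument supplies it correctly, via $U_X(X\setminus U_X(B;\delta);\delta)\subset X\setminus B$.
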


\begin{definition}
    We put\begin{equation*}
        \dconc^\pi(X,Y) \coloneqq \hausp{\kf^\pi}(F_X\circ\pr_1,F_Y\circ\pr_2)
    \end{equation*} for $\pi\in\Pi(\mu_X,\mu_Y)$.
\end{definition}

\begin{lemma}\label{lem:dconcIsContinuousByCoupling}
    For geometric data sets $X,Y$ and couplings $\pi,\rho\in\Pi(\mu_X,\mu_Y)$, \begin{equation*}
        |\dconc^\pi(X,Y)-\dconc^\rho(X,Y)| \leq 2\dpr(\pi,\rho),
    \end{equation*} where we equip $X\times Y$ with the $l^\infty$-product metric.
\end{lemma}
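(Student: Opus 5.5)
We reduce the statement to Lemma~\ref{lem:KFDistanceLessThanDp}, applied feature by feature, and then pass the estimate through the two sups and infs that make up the Hausdorff distance. Throughout, $X\times Y$ carries the $l^\infty$-product metric $d_\infty$. For $f\in F_X$ the function $f\circ\pr_1$ is 1-Lipschitz with respect to $d_\infty$. Indeed, $F_X\subset\lip1(X,d_X)$, so
\begin{equation*}
  |f(x)-f(x')|\le d_X(x,x')\le d_\infty\bigl((x,y),(x',y')\bigr).
\end{equation*}
The same argument shows that $g\circ\pr_2$ is 1-Lipschitz for every $g\in F_Y$. Hence all functions in $F_X\circ\pr_1\cup F_Y\circ\pr_2$ lie in $\lip1(X\times Y,\R)$.

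Fix $f\in F_X$ and $g\in F_Y$. Lemma~\ref{lem:KFDistanceLessThanDp} is applied to the metric space $X\times Y$, the target $\R$, the measures $\pi,\rho$, and the two maps $f\circ\pr_1,\ g\circ\pr_2$. It gives
\begin{equation*}
  \bigl|\kf^\pi(f\circ\pr_1,g\circ\pr_2)-\kf^\rho(f\circ\pr_1,g\circ\pr_2)\bigr|\le 2\dpr(\pi,\rho).
\end{equation*}
The Ky Fan distance is only $\pi$-a.e.\ defined, but here the functions are genuine continuous functions on $X\times Y$, so there is no ambiguity.

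Next we pass to the Hausdorff distance. Write
\begin{equation*}
  \hausp{\kf^\pi}(A,B)=\max\Bigl\{\sup_{a\in A}\inf_{b\in B}\kf^\pi(a,b),\ \sup_{b\in B}\inf_{a\in A}\kf^\pi(a,b)\Bigr\},
\end{equation*}
where $A=F_X\circ\pr_1$ and $B=F_Y\circ\pr_2$. The pairwise inequality $\kf^\pi(a,b)\le\kf^\rho(a,b)+2\dpr(\pi,\rho)$ survives taking $\inf$ over $b$, then $\sup$ over $a$, and then the maximum of the two one-sided terms. This yields
\begin{equation*}
  \dconc^\pi(X,Y)\le\dconc^\rho(X,Y)+2\dpr(\pi,\rho).
\end{equation*}
Swapping the roles of $\pi$ and $\rho$ gives the reverse inequality, which proves the claim. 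No finiteness issue arises, because $\kf\le 1$ always and so every quantity involved is at most $1$.

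The only step requiring care is the applicability of Lemma~\ref{lem:KFDistanceLessThanDp}. Its statement uses "$f,g\in\lip1(X,Y)$" with a metric on the domain, and the Prohorov distance must be taken with respect to the same metric on $X\times Y$ for which the functions are 1-Lipschitz. This is why we fix the $l^\infty$ metric. We also need $X\times Y$ to be separable for $\dpr$ to make sense, which holds since $X$ and $Y$ are.

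If one prefers not to rely on that lemma as a black box, it can be proved directly in a few lines. Let $\epsilon>\kf^\rho(a,b)$ and $\delta>\dpr(\pi,\rho)$. The set
\begin{equation*}
  A_\epsilon=\{|a-b|>\epsilon\}
\end{equation*}
satisfies $U(A_{\epsilon+2\delta};\delta)\subset A_\epsilon$, because $|a-b|$ is 2-Lipschitz. Then the Prohorov inequality gives
\begin{equation*}
  \pi(A_{\epsilon+2\delta})\le\rho(A_\epsilon)+\delta\le\epsilon+\delta,
\end{equation*}
so $\kf^\pi(a,b)\le\epsilon+2\delta$, which is the required bound.
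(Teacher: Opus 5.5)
Your proof is correct and follows essentially the same route as the paper: apply Lemma~\ref{lem:KFDistanceLessThanDp} to each pair $f\circ\pr_1$, $g\circ\pr_2$ (1-Lipschitz for the $l^\infty$-product metric), pass the resulting bound through the Hausdorff distance, and then symmetrize in $\pi,\rho$. Your optional direct proof of that lemma is also sound; note that it applies the Prohorov inequality in the form $\rho(U(A;\delta))\ge\pi(A)-\delta$, which relies on the standard symmetry of $\dpr$.
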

\begin{proof}
    By the symmetry of $\pi,\rho$, it is sufficient to prove that\begin{equation*}
        \dconc^\pi(X,Y)-\dconc^\rho(X,Y) \leq 2\dpr(\pi,\rho).
    \end{equation*} Take any $\epsilon>0$ and $f\in F_X$. There exists $g\in F_Y$ such that \begin{equation*}
        \kf^\rho(f\circ\pr_1,g\circ\pr_2) < \dconc^\rho(X,Y) + \epsilon.
    \end{equation*} Lemma~\ref{lem:KFDistanceLessThanDp} implies \begin{align*}
        \kf^\pi(f\circ\pr_1,g\circ\pr_2) & \leq \kf^\rho(f\circ\pr_1,g\circ\pr_2) + 2\dpr(\pi,\rho) \\
                                         & \leq \dconc^\rho(X,Y) + 2\dpr(\pi,\rho) + \epsilon.
    \end{align*} By the arbitrariness of $f$, we have $F_X\circ\pr_1\subset U_{\kf^\pi}(F_Y\circ\pr_2)$. From the symmetry of $f$ and $g$, we prove \begin{equation*}
        \dconc^\pi(X,Y) \leq \dconc^\rho(X,Y) + 2\dpr(\pi,\rho) + \epsilon.
    \end{equation*} The arbitrariness of $\epsilon$ implies $\dconc^\pi(X,Y)-\dconc^\rho(X,Y) \leq 2\dpr(\pi,\rho)$.
\end{proof}

\begin{theorem}\label{thm:dconcMinimum}
    For geometric data sets $X,Y$, we have\begin{equation*}
        \dconc(X,Y) = \min\{\hausp{\kf^\pi}(F_X\circ\pr_1,F_Y\circ\pr_2) \mid \pi\in\Pi(\mu_X,\mu_Y)\}.
    \end{equation*}
\end{theorem}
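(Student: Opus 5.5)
We prove the equality in two steps: first rewrite the infimum over parameters as an infimum over couplings, then show that this infimum is attained by a compactness argument.

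\emph{Step 1 (infimum over couplings).} By Lemma~\ref{lem:parameterEqualCoupling}, every coupling $\pi\in\Pi(\mu_X,\mu_Y)$ has the form $(\phi,\psi)_*\lambda$ for parameters $\phi$ of $\mu_X$ and $\psi$ of $\mu_Y$. Conversely, every such pair of parameters produces a coupling $(\phi,\psi)_*\lambda$. Features are $1$-Lipschitz for $d_X$, $d_Y$ by the remark after Proposition~\ref{thm:lip1=d}. Hence the Corollary following Lemma~\ref{lem:parameterToCouplingByFunction} applies with $F_X\subset\lip1(X)$ and $F_Y\subset\lip1(Y)$; its proof is elementwise via Lemma~\ref{lem:parameterToCouplingByFunction}, so it holds for subfamilies. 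It gives
\begin{equation*}
\dconc^{(\phi,\psi)_*\lambda}(X,Y)=\hausp{\kf^\lambda}(F_X\circ\phi,F_Y\circ\psi).
\end{equation*}
Taking infima over all pairs $(\phi,\psi)$ yields
\begin{equation*}
\dconc(X,Y)=\inf\{\dconc^\pi(X,Y)\mid \pi\in\Pi(\mu_X,\mu_Y)\}.
\end{equation*}
The lemmas are stated for mm-spaces. They apply here because $(X,d_X,\mu_X)$ and $(Y,d_Y,\mu_Y)$ are mm-spaces by the definition of a geometric data set.

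\emph{Step 2 (attainment).} Choose couplings $\pi_n$ with $\dconc^{\pi_n}(X,Y)\to\dconc(X,Y)$. By Lemma~\ref{lem:CouplingCompact}, $\Pi(\mu_X,\mu_Y)$ is $\dpr$-compact on $X\times Y$ with the $l^\infty$-product metric. So a subsequence converges in $\dpr$ to some $\pi\in\Pi(\mu_X,\mu_Y)$; the limit is still a coupling by compactness, and marginals pass to the limit anyway. Lemma~\ref{lem:dconcIsContinuousByCoupling} shows that $\pi\mapsto\dconc^\pi(X,Y)$ is $2$-Lipschitz with respect to $\dpr$. Therefore
\begin{equation*}
\dconc^\pi(X,Y)=\lim_{n}\dconc^{\pi_n}(X,Y)=\dconc(X,Y),
\end{equation*}
and the infimum is a minimum.

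\emph{Main obstacle.} The substantive input is Step 1: the identification of couplings with pushforwards of pairs of parameters, together with the transfer of Ky Fan distances from $\lambda$ to the coupling. Both are taken from Nakajima's lemmas. The only point to check is that these lemmas need only the $1$-Lipschitz property of the individual functions, not that the families be all of $\lip1$, and that the Hausdorff distance is computed elementwise. Once that is verified, Step 2 is routine given the continuity lemma already proved.
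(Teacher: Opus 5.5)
Your proposal is correct and follows the paper's proof: the infimum over pairs of parameters becomes an infimum over couplings via Lemma~\ref{lem:parameterEqualCoupling} and Lemma~\ref{lem:parameterToCouplingByFunction}, and attainment follows from the $\dpr$-compactness of $\Pi(\mu_X,\mu_Y)$ together with the $2$-Lipschitz dependence of $\pi\mapsto\dconc^\pi(X,Y)$ from Lemma~\ref{lem:dconcIsContinuousByCoupling}. You also assign the two Nakajima lemmas their logically correct roles: the pushforward identity transfers Ky Fan distances, and surjectivity of $(\phi,\psi)\mapsto(\phi,\psi)_*\lambda$ changes the index set. The paper's write-up cites them in the reverse order.
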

\begin{proof}
    Let $\phi$ be a parameter of $X$ and $\psi$ a parameter of $Y$. By Lemma~\ref{lem:parameterEqualCoupling}, we prove\begin{align*}
        \dconc(X,Y) & = \inf\{\hausp{\kf^\lambda}(F_X\circ\phi,F_Y\circ\psi) \mid \phi,\psi\}                   \\
                    & = \inf\{\hausp{\kf^{(\phi,\psi)_*\lambda}}(F_X\circ\pr_1,F_Y\circ\pr_2) \mid \phi,\psi\}.
    \end{align*} From Lemma~\ref{lem:parameterToCouplingByFunction}, this equals \begin{equation*}
        \inf\{\hausp{\kf^\pi}(F_X\circ\pr_1,F_Y\circ\pr_2) \mid \pi\in\Pi(\mu_X,\mu_Y)\}.
    \end{equation*} Lemma~\ref{lem:dconcIsContinuousByCoupling} implies the continuity of $\hausp{\kf^\pi}(F_X\circ\pr_1,F_Y\circ\pr_2)$ in $\pi\in\Pi(\mu_X,\mu_Y)$ with respect to $\dpr$. By Lemma~\ref{lem:CouplingCompact}, there exists $\pi\in\Pi(\mu_X,\mu_Y)$ such that \begin{equation*}
        \hausp{\kf^\pi}(F_X\circ\pr_1,F_Y\circ\pr_2) = \dconc(X,Y).
    \end{equation*}
\end{proof}

\subsection{The Semi-Continuity of Observable Diameter and the Conversation of Feature Order Relation with respect to the Observable Distance}
The semi-continuity of observable diameter with respect to the observable distance is obvious from the following proposition.

\begin{proposition}
    Let $X,Y$ be geometric data sets. For any real numbers $\delta > \dconc(X,Y)$ and $\kappa\in [0,1-\delta]$, we have\begin{equation*}
        \od\bigl(X;\ -(\kappa+\delta)\bigr) \leq \od(Y;\  -\kappa) + 2\delta.
    \end{equation*}
\end{proposition}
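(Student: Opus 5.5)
We follow the standard mm-space argument, with $\lip1$ replaced by the feature families. Fix $\delta > \dconc(X,Y)$ and $\kappa\in[0,1-\delta]$. By the definition of $\dconc$ (or Theorem~\ref{thm:dconcMinimum}), we choose parameters $\phi$ of $\mu_X$ and $\psi$ of $\mu_Y$ with
\begin{equation*}
    \hausp{\kf^\lambda}(F_X\circ\phi,F_Y\circ\psi) < \delta' < \delta
\end{equation*}
for some intermediate $\delta'$. Since $\od(X;-(\kappa+\delta))$ is a supremum over $f\in F_X$, it suffices to show that for every $f\in F_X$
\begin{equation*}
    \pd(f_*\mu_X;1-(\kappa+\delta)) \leq \od(Y;-\kappa) + 2\delta.
\end{equation*}

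Fix $f\in F_X$. By the Hausdorff bound there is $g\in F_Y$ with $\kf^\lambda(f\circ\phi,g\circ\psi) < \delta'$. Because $\phi_*\lambda=\mu_X$ and $\psi_*\lambda=\mu_Y$, we have $f_*\mu_X=(f\circ\phi)_*\lambda$ and $g_*\mu_Y=(g\circ\psi)_*\lambda$. Lemma~\ref{lem:ProhorovLeqKyFan}, applied on $(I,\lambda)$ with target $\R$, then gives
\begin{equation*}
    \dpr(f_*\mu_X,g_*\mu_Y)\leq \kf^\lambda(f\circ\phi,g\circ\psi) < \delta' < \delta .
\end{equation*}
Lemma~\ref{lem:partDiamParamEneq} with $\mu=f_*\mu_X$, $\nu=g_*\mu_Y$ now yields
\begin{equation*}
    \pd(f_*\mu_X;1-(\kappa+\delta)) \leq \pd(g_*\mu_Y;1-\kappa)+2\delta \leq \od(Y;-\kappa)+2\delta,
\end{equation*}
where the last step uses $g\in F_Y$. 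Taking the supremum over $f$ completes the argument.

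The only delicate points are the parameter range and measurability. Lemma~\ref{lem:partDiamParamEneq} is stated for $\kappa\in(0,1)$, so the endpoint $\kappa=0$ needs a separate check. If $\kappa=0$, we apply the lemma with a small $\kappa'>0$ and $\delta''=\delta-\kappa'$, keeping $\delta''>\dpr$ by choosing $\kappa'<\delta-\delta'$. This gives
\begin{equation*}
    \pd(f_*\mu_X;1-\delta)\leq \pd(g_*\mu_Y;1-\kappa')+2\delta .
\end{equation*}
Since $\pd(g_*\mu_Y;1-\kappa')\leq \pd(g_*\mu_Y;1)$ by monotonicity, this is at most $\od(Y;0)+2\delta$. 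If $\kappa+\delta=1$, the left side is $\pd(\cdot;0)=0$ and the inequality is trivial. Measurability of $f\circ\phi$ holds because $f$ is 1-Lipschitz for $d_X$, so the Ky Fan and push-forward manipulations are legitimate. The main obstacle is only bookkeeping these edge cases; the core inequality is the chain above.
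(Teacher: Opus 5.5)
Your proof is correct and follows the paper's argument: for each $f\in F_X$ you pick $g\in F_Y$ with Ky Fan distance below $\delta$, bound $\dpr(f_*\mu_X,g_*\mu_Y)$ by Lemma~\ref{lem:ProhorovLeqKyFan}, apply Lemma~\ref{lem:partDiamParamEneq}, and take the supremum over $f$. The paper phrases the matching through a coupling $\pi\in\Pi(\mu_X,\mu_Y)$ rather than through parameters directly, which is equivalent. Your separate handling of $\kappa=0$ (Lemma~\ref{lem:partDiamParamEneq} as stated requires $\kappa>0$) is extra care that the paper's proof omits.
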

\begin{proof}
    For any $f\in F_X$ and $\epsilon > 0$, there exist $\pi\in\Pi(\mu_X,\mu_Y),g\in F_Y$ such that\begin{equation*}
        \kf^\pi(f\circ\pr_1,g\circ\pr_2) < \delta.
    \end{equation*} Lemma~\ref{lem:ProhorovLeqKyFan} implies\begin{align*}
        \dpr(f_*\mu_X,g*\mu_Y) & = \dpr(f_*(\pr_1)_*\pi,g_*(\pr_2)_*\pi) \\ &\leq \kf^\pi(f\circ\pr_1,g\circ\pr_2) < \delta. \\
    \end{align*}
    From this and Lemma~\ref{lem:partDiamParamEneq}, we have \begin{align*}
        \pd\bigl(f_*\mu_X;\ -(\kappa+\delta)\bigr) & \leq \pd(g_*\mu_Y;\  -\kappa) + 2\delta \\
                                                   & \leq \od(Y;\  -\kappa) + 2\delta.
    \end{align*} The arbitrariness of $f$ implies \begin{equation*}
        \od\bigl(X;\ -(\kappa+\delta)\bigr) \leq \od(Y;\  -\kappa) + 2\delta.
    \end{equation*}
\end{proof}

To show that the feature order relation is preserved in the concentration topology, we need some lemmas.
\begin{lemma}\label{lem:bddIsCompact}
    Let $X$ be an mm-space, $x_0\in X$, and $c\geq 0$ a real number. Then the set\begin{equation*}
        \bdd(X,x_0,c) \coloneqq \{f\in\lip1(X)\mid |f(x_0)| \leq c\}
    \end{equation*} is compact with respect to the pointwise topology and the topology of convergence in measure.
\end{lemma}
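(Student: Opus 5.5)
The argument is an Arzelà--Ascoli type compactness statement for the pointwise topology, followed by the transfer to convergence in measure given by Proposition~\ref{prop:pointwiseEqKf}.

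\textbf{Step 1 (pointwise bounds).} Every $f \in \bdd(X,x_0,c)$ is 1-Lipschitz with $|f(x_0)|\le c$, so for every $x\in X$
\begin{equation*}
    |f(x)| \le c + d_X(x,x_0).
\end{equation*}
Hence $\bdd(X,x_0,c)$ is contained in the product
\begin{equation*}
    P \coloneqq \prod_{x\in X}\bigl[-c-d_X(x,x_0),\, c+d_X(x,x_0)\bigr].
\end{equation*}
By Tychonoff's theorem, $P$ is compact in the topology of pointwise convergence.

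\textbf{Step 2 (closedness).} The conditions $|f(x)-f(y)|\le d_X(x,y)$ for all $x,y$, and $|f(x_0)|\le c$, each involve only finitely many coordinates and are non-strict inequalities. They are therefore preserved under pointwise limits, so $\bdd(X,x_0,c)$ is closed in $P$. It follows that $\bdd(X,x_0,c)$ is compact in the pointwise topology.

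\textbf{Step 3 (transfer to convergence in measure).} Proposition~\ref{prop:pointwiseEqKf} gives equivalence of pointwise convergence and $\kf^{X}$-convergence only for sequences. The pointwise topology on $\lip1(X)$ is not obviously first countable, so I will pass through sequences.

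\begin{itemize}
    \item First I reduce the pointwise topology on $\lip1(X)$ to that on a countable dense set $D\subset X$, which exists since $X$ is separable. A net of 1-Lipschitz functions converging pointwise on $D$ converges pointwise on all of $X$, by the usual $\epsilon/3$ argument using equicontinuity.
    \item Consequently the pointwise topology on $\bdd(X,x_0,c)$ coincides with the topology it inherits from $\R^{D}$. Since $D$ is countable, $\R^{D}$ is metrizable, so this topology is metrizable.
    \item A compact metrizable space is sequentially compact. Given any sequence in $\bdd(X,x_0,c)$, I therefore extract a pointwise convergent subsequence whose limit lies in $\bdd(X,x_0,c)$.
    \item By Proposition~\ref{prop:pointwiseEqKf}, this subsequence also $\kf^{X}$-converges to the same limit.
\end{itemize}

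Since $\kf^{X}$ is a metric on this set, sequential compactness with respect to $\kf^{X}$ is the same as compactness in the topology of convergence in measure.

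The main obstacle is Step 3, namely justifying the passage from compactness of the possibly non-metrizable pointwise topology to sequential compactness. The separability reduction described above handles this. Alternatively, one could note that, since both topologies restricted to $\lip1(X)$ are metrizable and share the same convergent sequences, they coincide on $\bdd(X,x_0,c)$.
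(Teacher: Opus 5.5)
Your proof is correct and follows the same route as the paper. Steps 1--2 are the standard Tychonoff proof of the Arzel\`a--Ascoli compactness that the paper cites, and Step 3 is the paper's appeal to Proposition~\ref{prop:pointwiseEqKf}, with your separability/metrizability argument making explicit why a sequence-level equivalence suffices to transfer compactness, which the paper does in one line (note also that $\kf^X$ is a genuine metric on 1-Lipschitz functions because $\supp\mu_X = X$).
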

\begin{proof}
    The equicontinuity of $\lip1(X)$ is clear. For any $x\in X$ and $f\in\bdd(X,x_0,c)$, we have\begin{equation*}
        |f(x)| \leq d_X(x,x_0) + c.
    \end{equation*} This implies that $\bdd(X,x_0,c)$ is pointwise bounded. By the Arzelà--Ascoli theorem, $\bdd(X,x_0,c)$ is compact with respect to the pointwise topology. We have the compactness with respect to the topology of convergence in measure from Lemma~\ref{prop:pointwiseEqKf}.
\end{proof}

\begin{lemma}\label{lem:dconcFeatureFunction}
    For geometric data sets $X,Y$ and $\pi\in\Pi(\mu_X,\mu_Y)$, there exists a map $u\colon \overline{F_X}\to\overline{F_Y}$ such that\begin{equation*}
        \kf^\pi(f\circ\pr_1,u(f)\circ\pr_2) \leq \dconc(X,Y)
    \end{equation*} for any $f\in F_X$.
\end{lemma}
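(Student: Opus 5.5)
Read literally for an arbitrary $\pi\in\Pi(\mu_X,\mu_Y)$, the bound by $\dconc(X,Y)$ cannot hold in general, because $\dconc(X,Y)$ is an infimum over couplings. I will therefore prove it for a coupling $\pi$ attaining the minimum in Theorem~\ref{thm:dconcMinimum}, for which $D\coloneqq\dconc^\pi(X,Y)=\dconc(X,Y)$. More generally, the argument gives the bound with $\dconc^\pi(X,Y)$ in place of $\dconc(X,Y)$ for every $\pi$; since $\dconc(X,Y)\le\dconc^\pi(X,Y)$, this is weaker than the literal statement for non-optimal $\pi$. The plan is to choose $u(f)$ for each $f\in F_X$ separately, using the axiom of choice, as a limit of near-optimal features of $Y$. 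On $\overline{F_X}\setminus F_X$ the value of $u$ is arbitrary.

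Since $\kf^\pi\le 1$ always, the case $D\ge 1$ is trivial: any $g\in F_Y$ will do. So assume $D<1$ and fix $f\in F_X$. By the definition of the Hausdorff distance, for each $n$ there is $g_n\in F_Y$ with $\kf^\pi(f\circ\pr_1,g_n\circ\pr_2)<D+1/n$.

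\textbf{Main step: uniform pointwise bound on $(g_n)$.} This is the main obstacle, because Lemma~\ref{lem:bddIsCompact} requires a uniform bound at a single base point.
\begin{enumerate}
\item Fix $\epsilon>0$ with $D+2\epsilon<1$.
\item Since $\pi$ is a Borel probability measure on the complete separable space $X\times Y$, it is tight. Choose compact sets $K\subset X$ and $L\subset Y$ with $\pi(K\times L)>D+2\epsilon$.
\item Being $1$-Lipschitz, $f$ is bounded on $K$, say $|f|\le c$ there.
\item For $n$ with $1/n<\epsilon$, the set $\{|f\circ\pr_1-g_n\circ\pr_2|>D+\epsilon\}$ has $\pi$-measure at most $D+\epsilon<\pi(K\times L)$. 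Hence there is $(x_n,y_n)\in K\times L$ with $|g_n(y_n)|\le c+D+\epsilon$.
\item Fix $y_0\in L$. Since $g_n$ is $1$-Lipschitz, $|g_n(y_0)|\le c+1+\diam L$.
\end{enumerate}
Thus all $g_n$ lie in $\bdd(Y,y_0,c+1+\diam L)$.

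\textbf{Extracting a limit.} By Lemma~\ref{lem:bddIsCompact}, this set is compact for the pointwise topology and the topology of convergence in measure. By Proposition~\ref{prop:pointwiseEqKf}, both topologies are metrized by $\kf^{\mu_Y}$. So a subsequence $g_{n_k}$ converges pointwise and in $\kf^{\mu_Y}$ to some $g\in\lip1(Y)$, and $g\in\overline{F_Y}$ by definition of the closure. Set $u(f)\coloneqq g$.

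\textbf{The bound.} Since $(\pr_2)_*\pi=\mu_Y$, we have $\kf^\pi(g_{n_k}\circ\pr_2,g\circ\pr_2)=\kf^{\mu_Y}(g_{n_k},g)\to0$. The triangle inequality for the Ky Fan metric then gives
\begin{equation*}
\kf^\pi(f\circ\pr_1,g\circ\pr_2)\le D+\tfrac1{n_k}+\kf^{\mu_Y}(g_{n_k},g)\longrightarrow D .
\end{equation*}
This proves $\kf^\pi(f\circ\pr_1,u(f)\circ\pr_2)\le D$ for every $f\in F_X$.
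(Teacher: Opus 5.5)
Your proof is correct, and your reading of the statement matches what the paper actually proves. Its argument only yields the bound $\dconc^\pi(X,Y)$, and the lemma is later invoked with the optimal coupling from Theorem~\ref{thm:dconcMinimum}.

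The route differs from the paper's at both key steps.
\begin{enumerate}
\item \emph{Base-point bound.} The paper forms the closed sets $S_n=\{(x,y)\mid |f(x)-g_n(y)|\le\dconc^\pi(X,Y)+2\epsilon/n\}$ and extracts a weak-Hausdorff limit $S$ via Theorem~\ref{thm:weakHausdorffSubSequence}. It then gets $\pi(S)\ge 1-\dconc^\pi(X,Y)>0$ from Proposition~\ref{thm:weakHausdorffMeasureBound} and reads the bound off at a point of $S$. You simply find a rectangle $K\times L$ heavier than the bad set; large balls would do as well as compact sets.
\item \emph{Passing to the limit.} The paper shows $|f(x)-g(y)|\le\dconc^\pi(X,Y)$ at every point of $S$. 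You instead upgrade pointwise convergence to $\kf^{\mu_Y}$-convergence via Proposition~\ref{prop:pointwiseEqKf}, then use the triangle inequality for $\kf^\pi$ together with $(\pr_2)_*\pi=\mu_Y$.
\end{enumerate}
Your version is more elementary and avoids the weak Hausdorff compactness of closed sets altogether. The paper's version mirrors the technique it genuinely needs for the box distance (Lemma~\ref{lem:MinBoxMap}, Theorem~\ref{thm:BoxIsMin}), where a single limiting closed set must control $\dinf{S}$.

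One addition: the paper proves the bound for all $f\in\overline{F_X}$, not just $f\in F_X$. That stronger form is the one used later, since $u$ is applied to $F_{X'}\circ\phi\subset\overline{F_X}$ in Lemma~\ref{lemma:dconc-hausdorff-func}. Your argument covers it unchanged. Choose $f_n\in F_X$ with $\kf^{\mu_X}(f_n,f)<1/n$ and $g_n$ near-optimal for $f_n$; this gives $\kf^\pi(f\circ\pr_1,g_n\circ\pr_2)<D+2/n$, and the rest of your proof uses only that $f$ is $1$-Lipschitz.
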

\begin{proof}
    Take any $f\in\overline{F_X}$ and let us find $g\in \overline{F_Y}$ such that\begin{equation*}
        \kf^\pi(f\circ\pr_1,g\circ\pr_2) \leq \dconc^\pi(X,Y).
    \end{equation*} In the case of $\dconc^\pi(X,Y) = 1$, it is clear that\begin{equation*}
        \kf^\pi(f\circ\pr_1,g\circ\pr_2) \leq 1 = \dconc^\pi(X,Y)
    \end{equation*} for any $g\in F_Y\neq \emptyset$. We assume $\dconc^\pi(X,Y) < 1$ and put $\epsilon \coloneqq (1-\dconc^\pi(X,Y))/3$.
    There exists a sequence $\{f_n\}_{n=1}^\infty\subset F_X$ such that $\kf^X(f_n,f) < \epsilon/n$ because $f\in\overline{F_X}$. By the definition of $\dconc^\pi(X,Y)$, there exists $\{g_n\}_{n=1}^\infty\subset F_Y$ such that \begin{equation*}
        \kf^\pi(f_n\circ\pr_1,g_n\circ\pr_2) \leq \dconc^\pi(X,Y) + \frac{\epsilon}{n}.
    \end{equation*} The triangle inequality of $\kf^\pi$ implies that\begin{equation*}
        \kf^\pi(f\circ\pr_1,g_n\circ\pr_2) \leq \dconc^\pi(X,Y) + \frac{2\epsilon}{n}.
    \end{equation*} We define a closed set as\begin{equation*}
        S_n\coloneqq \left\{(x,y)\in X\times Y\mmid |f(x)-g_n(y)| \leq \dconc^\pi(X,Y) + \frac{2\epsilon}{n}\right\}.
    \end{equation*} Then, we have $\pi(S_n) \geq 1-(\dconc^\pi(X,Y) + 2\epsilon/n) > 0$. In particular, $S_n$ is non-empty. By Theorem~\ref{thm:weakHausdorffSubSequence}, there exists a closed subset $S$ of $X\times Y$ and a subsequence $\{S_{\iota_1(n)}\}_{n=1}^\infty$ such that $S_{\iota_1(n)}$ converges to $S$ in the weak Hausdorff sense. Proposition~\ref{thm:weakHausdorffMeasureBound} implies $\pi(S) \geq 1-\dconc^\pi(X,Y) > 0$. In particular, $S$ is non-empty. For any $(x,y)\in S$, there exists $(x_n,y_n)\in S_{\iota_1(n)}$, $n=1,2,\ldots$, such that $(x_n,y_n)$ converges to $(x,y)$ as $n\to\infty$. Thus, we have\begin{align*}
        \left|f(x)-g_{\iota_1(n)}(y)\right| & \leq d_X(x,x_n) + \left|f(x_n)-g_{\iota_1(n)}(y_n)\right| + d_Y(y_n,y)       \\
                                            & \leq d_X(x,x_n) + \dconc^\pi(X,Y) + \frac{2\epsilon}{\iota_1(n)} +d_Y(y_n,y) \\
                                            & \to \dconc^\pi(X,Y)
    \end{align*} as $n\to\infty$. Take a point $(x_0,y_0)\in S\neq\emptyset$ and put \begin{equation*}
        L \coloneqq \sup \left\{ \left| f(x_0)-g_{\iota_1(n)}(y_0) \right| \mmid n=1,2,\ldots\right\} < +\infty.
    \end{equation*}
    By Lemma~\ref{lem:bddIsCompact} and $g_{\iota_1(n)}(y_0)\in\bdd(Y,y_0,|f_(x_0)|+L)$, there exists a $g\in F_Y$ and a subsequence $\left\{g_{\iota_1\circ\iota_2(n)}\right\}_{n=1}^\infty$ such that $g_{\iota_1\circ\iota_2(n)}$ converges to $g$ as $n\to\infty$. For all $(x,y)\in S$, \begin{align*}
        |f(x)-g(y)| & \leq \lim_{n\to\infty} |f(x) - g_{\iota_1\circ\iota_2(n)}(y)| \\
                    & \leq \dconc^\pi(X,Y).
    \end{align*} Since $\pi(S) \geq 1 - \dconc^\pi(X,Y)$, we have\begin{equation*}
        \kf^\pi(f\circ\pr_1,g\circ\pr_2) \leq \dconc^\pi(X,Y).
    \end{equation*}
\end{proof}

\begin{lemma}\label{lem:dconcOfDominatedSpace}
    Let $X,Y$ be geometric data sets, $Z$ a feature space, $\nu$ a Borel probability measure on $Z$, and $\phi\colon Z\to X$, $\psi\colon Z\to Y$ maps. If $\phi,\psi$ satisfy\begin{equation*}
        F_X\circ\phi\subset F_Z,\ F_Y\circ\psi\subset F_Z,\ \phi_*\nu = \mu_X,\ \psi_*\nu = \mu_Y,
    \end{equation*} then we have\begin{equation*}
        \dconc(X,Y) \leq \hausp{\kf^\nu}(F_X\circ\phi,F_Y\circ\psi).
    \end{equation*}
\end{lemma}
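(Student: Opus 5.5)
The approach is to reduce to Theorem~\ref{thm:dconcMinimum} by pushing $\nu$ forward to a coupling. Put $\pi \coloneqq (\phi,\psi)_*\nu$, a Borel probability measure on $X\times Y$ (with the $l^\infty$-product metric). Its marginals are $(\pr_1)_*\pi=\phi_*\nu=\mu_X$ and $(\pr_2)_*\pi=\psi_*\nu=\mu_Y$, so $\pi\in\Pi(\mu_X,\mu_Y)$. By Theorem~\ref{thm:dconcMinimum},
\begin{equation*}
    \dconc(X,Y)\leq \hausp{\kf^\pi}(F_X\circ\pr_1,F_Y\circ\pr_2).
\end{equation*}
It then suffices to prove that for every $f\in F_X$ and $g\in F_Y$,
\begin{equation*}
    \kf^\pi(f\circ\pr_1,g\circ\pr_2)=\kf^\nu(f\circ\phi,g\circ\psi).
\end{equation*}
Once this holds, the Hausdorff distances coincide term by term, which gives the claim.

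This identity is the analogue of Lemma~\ref{lem:parameterToCouplingByFunction}, with $\lambda$ replaced by $\nu$. It follows directly from the push-forward definition:
\begin{equation*}
    \pi(\{(x,y)\mid |f(x)-g(y)|>\epsilon\}) = \nu(\{z\mid |f(\phi(z))-g(\psi(z))|>\epsilon\}).
\end{equation*}
Since the two measures agree for every $\epsilon$, the infima defining the two Ky Fan distances agree.

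The main obstacle is measurability. The maps $\phi$ and $\psi$ are not assumed Borel, so $\pi$ and the compositions $f\circ\phi$, $g\circ\psi$ must be well defined. I would use the hypotheses $F_X\circ\phi\subset F_Z$ and $F_Y\circ\psi\subset F_Z$. Every element of $F_Z$ is $1$-Lipschitz for $d_Z$, so for all $z,w\in Z$,
\begin{equation*}
    d_X(\phi(z),\phi(w))=\sup_{f\in F_X}|f(\phi(z))-f(\phi(w))|\leq d_Z(z,w).
\end{equation*}
Hence $\phi$, and likewise $\psi$, is $1$-Lipschitz and in particular Borel (assuming $\nu$ is Borel with respect to $d_Z$, as intended). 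Then $(\phi,\psi)$ is Borel into $X\times Y$, because $X$ and $Y$ are separable and so the product Borel $\sigma$-algebra agrees with the Borel $\sigma$-algebra of the product metric. Consequently $\pi$ is a genuine Borel probability measure, and $f\circ\phi$, $g\circ\psi$ are measurable.
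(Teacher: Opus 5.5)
Your proposal is correct and follows the paper's argument: push $\nu$ forward to the coupling $\pi=(\phi,\psi)_*\nu$, identify the Ky Fan distances through the push-forward identity, and bound $\dconc(X,Y)$ by the value of $\hausp{\kf^\pi}(F_X\circ\pr_1,F_Y\circ\pr_2)$ at that coupling. You also check that $F_X\circ\phi\subset F_Z$ makes $\phi$ (and likewise $\psi$) $1$-Lipschitz, hence Borel, which settles a measurability point the paper leaves implicit.
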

\begin{proof}
    We put $\pi\coloneqq (\phi,\psi)_*\nu$. Then $\pi$ is a coupling between $\mu_X$ and $\mu_Y$. For any $f\in F_X$, $g\in F_Y$, and $\epsilon > 0$, we have\begin{align*}
         & \pi(\left\{(x,y)\in X\times Y \mmid \left|f\circ\pr_1(x,y)-g\circ\pr_2(x,y)\right|\geq\epsilon\right\})                                          \\
         & = \nu\left((\phi,\psi)^{-1}\left(\bigl\{(x,y)\in X\times Y \mmid \left|f\circ\pr_1(x,y)-g\circ\pr_2(x,y)\right|\geq\epsilon\bigr\}\right)\right) \\
         & = \nu\left(\bigl\{z\in Z \mmid \left|f\circ\phi(z)-g\circ\psi(z)\right|\geq\epsilon\bigr\}\right).
    \end{align*} The arbitrariness of $\epsilon$ implies \begin{equation*}
        \kf^\pi(f\circ\pr_1,g\circ\pr_2)\leq\kf^\nu(f\circ\phi,g\circ\psi).
    \end{equation*} Since $f,g$ are arbitrary,\begin{equation*}
        \dconc(X,Y) \leq \hausp{\kf^\pi}(F_X\circ\pr_1,F_Y\circ\pr_2)\leq\hausp{\kf^\nu}(F_X\circ\phi,F_Y\circ\psi).
    \end{equation*}
\end{proof}

\begin{lemma}\label{lemma:dconc-hausdorff-func}
    Let $X,X',Y$ be geometric data sets. If $X'\preceq X$, then there exists a geometric data set $Y'$ such that\begin{equation*}
        \#F_{Y'} \leq \#F_{X'},\ Y'\preceq Y,\ \dconc(X',Y') \leq \dconc(X,Y).
    \end{equation*}
\end{lemma}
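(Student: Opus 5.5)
Let $p\colon X\to X'$ be a domination, so $p_*\mu_X=\mu_{X'}$ and $F_{X'}\circ p\subset\overline{F_X}$. By Theorem~\ref{thm:dconcMinimum}, fix an optimal coupling $\pi\in\Pi(\mu_X,\mu_Y)$ with $\dconc^\pi(X,Y)=\dconc(X,Y)$.

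\textbf{Step 1: transport each feature of $X'$ to $Y$.} By Lemma~\ref{lem:dconcFeatureFunction} there is a map $u\colon\overline{F_X}\to\overline{F_Y}$ with
\begin{equation*}
\kf^\pi(f\circ\pr_1,u(f)\circ\pr_2)\le\dconc(X,Y).
\end{equation*}
The lemma is stated for $f\in F_X$, but its proof takes $f\in\overline{F_X}$, so we may use it for all $f\in\overline{F_X}$. Set
\begin{equation*}
G\coloneqq\{u(h\circ p)\mid h\in F_{X'}\}\subset\overline{F_Y}.
\end{equation*}
Then $\#G\le\#F_{X'}$.

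\textbf{Step 2: form the quotient.} Let $Y'\coloneqq Y/G$, the quotient geometric data set of Proposition~\ref{thm:QuotientSpaceExist}, with quotient domination $q\colon Y\to Y'$. Then $F_{Y'}\circ q=G$, $q_*\mu_Y=\mu_{Y'}$, and $Y'\preceq Y$. Since $q$ is surjective onto a dense set, and the construction takes $F_{Y'}=\{k\in\lip1(Y')\mid k\circ q\in G\}$, each $k\in F_{Y'}$ is determined by $k\circ q$ on the dense set $q(Y)$. Hence $\#F_{Y'}\le\#G\le\#F_{X'}$.

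Proposition~\ref{thm:QuotientSpaceExist} is stated for $G\subset F_Y$, whereas here $G\subset\overline{F_Y}$. I expect its proof to go through verbatim for subsets of $\overline{F_Y}$, since only $G\subset\lip1(Y)$ and $d_G\le d_Y$ are used. Alternatively, one can pick approximants in $F_Y$, but exact equality is needed for Step 3, so the verbatim route is preferable.

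\textbf{Step 3: bound $\dconc(X',Y')$.} Take $Z=X\times Y$ with feature set $\overline{F_X}\otimes\overline{F_Y}$ and measure $\nu=\pi$. Let $\phi=p\circ\pr_1\colon Z\to X'$ and $\psi=q\circ\pr_2\colon Z\to Y'$. Then $\phi_*\pi=\mu_{X'}$ and $\psi_*\pi=\mu_{Y'}$. Moreover, $F_{X'}\circ\phi=(F_{X'}\circ p)\circ\pr_1\subset F_Z$ and $F_{Y'}\circ\psi=G\circ\pr_2\subset F_Z$. The feature space condition holds because $d_{F_Z}$ is the $l^\infty$ metric, by the remark on product feature spaces together with the fact that closures do not change $d$.

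By Lemma~\ref{lem:dconcOfDominatedSpace},
\begin{equation*}
\dconc(X',Y')\le\hausp{\kf^\pi}(F_{X'}\circ p\circ\pr_1,\;G\circ\pr_2).
\end{equation*}
The Hausdorff distance is bounded in both directions by the pairing $h\circ p\leftrightarrow u(h\circ p)$:
\begin{itemize}
\item each $h\circ p\circ\pr_1$ lies within $\kf^\pi$-distance $\dconc(X,Y)$ of $u(h\circ p)\circ\pr_2\in G\circ\pr_2$;
\item conversely, every element of $G\circ\pr_2$ has this form for some $h$, so it lies within the same distance of $F_{X'}\circ p\circ\pr_1$.
\end{itemize}
This gives $\dconc(X',Y')\le\dconc(X,Y)$.

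\textbf{Main obstacle.} The technical point is Step 2: the quotient must be taken by a subfamily of $\overline{F_Y}$ rather than of $F_Y$, and one must check that the resulting $F_{Y'}$ is genuinely in bijection with $G$, so that the cardinality bound holds. This relies on the extension uniqueness in Lemma~\ref{thm:quotientDistance}(3) together with density of $q(Y)$. A secondary point is applying Lemma~\ref{lem:dconcFeatureFunction} to $f\in\overline{F_X}$ rather than only $f\in F_X$, which its proof covers.
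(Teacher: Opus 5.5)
Your proposal is correct and follows the paper's proof essentially step for step. Both arguments take an optimal coupling via Theorem~\ref{thm:dconcMinimum}, use the transport map $u$ from Lemma~\ref{lem:dconcFeatureFunction} on $\overline{F_X}$, form the quotient $Y'=Y/u(F_{X'}\circ p)$, and conclude with Lemma~\ref{lem:dconcOfDominatedSpace} and the pairing $h\circ p\leftrightarrow u(h\circ p)$. You also make explicit two points the paper leaves implicit, namely the cardinality bound $\#F_{Y'}=\#G\le\#F_{X'}$ and that the quotient construction works for $G\subset\overline{F_Y}$, and both justifications are sound.
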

\begin{proof}
    Take a domination $\phi\colon X\to X'$. By Theorem~\ref{thm:dconcMinimum} and Lemma~\ref{lem:dconcFeatureFunction}, there exist $\pi\in\Pi(\mu_X,\mu_Y)$ and $u\colon \overline{F_X}\to \overline{F_Y}$ such that\begin{equation*}
        \kf^\pi(f\circ\pr_1,u(f)\circ\pr_2) \leq \dconc(X,Y)
    \end{equation*} for any $f\in\overline{F_X}$. We take $Y'\coloneqq Y/u(F_{X'}\circ\phi)$ and the quotient domination $\psi\colon Y\to Y'$. Since $F_{Y'}\circ\psi = u(F_{X'}\circ\phi)$ and by Lemma~\ref{lem:dconcOfDominatedSpace}, we have\begin{align*}
        \dconc(X',Y') & \leq \hausp{\kf^\pi}(F_{X'}\circ\phi\circ\pr_1,F_{Y'}\circ\psi\circ\pr_2)                   \\
                      & = \hausp{\kf^\pi}(F_{X'}\circ\phi\circ\pr_1,u(F_{X'}\circ\phi)\circ\pr_2) \leq \dconc(X,Y).
    \end{align*}
\end{proof}

\begin{theorem}\label{theorem:DconcPreservesDomination}
    Let $X,Y,X_n,Y_n$ be geometric data sets, $n = 1,2,\ldots$. If $X_n\preceq Y_n$ for $n=1,2,\ldots$, and if $X_n$ and $Y_n $ converge to $X$ and $Y$ as $n\to\infty$, respectively, then we have $X\preceq Y$.
\end{theorem}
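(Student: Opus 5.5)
We follow the mm-space argument (Shioya's book, Theorem 5.35 type), adapted to features. The key tool is Lemma~\ref{lemma:dconc-hausdorff-func}, which transports dominated spaces along the observable distance.

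\textbf{Step 1: transport the domination to the limit.} Since $X_n\preceq Y_n$, apply Lemma~\ref{lemma:dconc-hausdorff-func} with the triple $(Y_n, X_n, Y)$. This gives a geometric data set $X_n'\preceq Y$ with
\begin{equation*}
\dconc(X_n,X_n')\leq \dconc(Y_n,Y)\to 0 .
\end{equation*}
By the triangle inequality, $\dconc(X_n',X)\to 0$. So it suffices to prove the following closedness statement: if $X_n'\preceq Y$ for all $n$ and $X_n'\xrightarrow{\conc}X$, then $X\preceq Y$.

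\textbf{Step 2: represent the $X_n'$ inside $Y$.} Fix dominations $p_n\colon Y\to X_n'$. By Proposition~\ref{thm:domintationIs1Lip}, each $p_n$ is 1-Lipschitz, and $G_n\coloneqq F_{X_n'}\circ p_n\subset \overline{F_Y}$. Using Theorem~\ref{thm:dconcMinimum}, choose couplings $\pi_n\in\Pi(\mu_X,\mu_{X_n'})$ attaining $\dconc(X,X_n')$. Pull these back to $\rho_n\coloneqq (\pr_1,p_n\circ\pr_2)_*$ of a gluing, so that $\rho_n\in\Pi(\mu_X,\mu_Y)$. Then the features $f\in F_X$ are $\kf^{\rho_n}$-close to elements of $G_n\circ\pr_2$ within $\epsilon_n\to0$, and conversely.

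By Lemma~\ref{lem:CouplingCompact}, pass to a subsequence with $\rho_n\to\rho$ in $\dpr$. By Lemma~\ref{lem:KFDistanceLessThanDp}, for each $f\in F_X$ there are $g_n\in\overline{F_Y}$ with $\kf^\rho(f\circ\pr_1,g_n\circ\pr_2)\to0$.

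\textbf{Step 3: take limits of features.} For each $f\in F_X$, the $g_n$ have a bounded value at a base point, because $f$ is finite on the support. By Lemma~\ref{lem:bddIsCompact}, together with closedness of $\overline{F_Y}$ in the pointwise/measure topology (Proposition~\ref{prop:pointwiseEqKf}), a subsequence converges to some $g_f\in\overline{F_Y}$ with $f(x)=g_f(y)$ for $\rho$-a.e.\ $(x,y)$.

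To get a single map, argue as follows:
\begin{itemize}
\item[(a)] $d_X(x,x')=\sup_f|g_f(y)-g_f(y')|\leq d_Y(y,y')$ on $\supp\rho$, and $x$ is determined by $y$ there.
\item[(b)] Hence $\supp\rho$ is the graph of a 1-Lipschitz map $q\colon Y\to X$. This uses that $\supp\mu_Y=Y$ and that the projection of $\supp\rho$ is dense, extended by completeness.
\item[(c)] Since the second marginal of $\rho$ is $\mu_Y$, we get $q_*\mu_Y=\mu_X$.
\item[(d)] Since $f\circ q=g_f$ $\mu_Y$-a.e. and both are continuous with full support, $f\circ q=g_f\in\overline{F_Y}$ everywhere.
\end{itemize}
Thus $q$ is a domination and $X\preceq Y$.

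\textbf{Main obstacle.} The delicate point is Step 3(b): deducing that $\supp\rho$ is a graph. The estimate $|f(x)-f(x')|\leq d_Y(y,y')$ must be established for all $f\in F_X$ simultaneously on one full-measure closed set. I would handle this by working with a countable $\kf$-dense subset of $F_X$, which exists by the separability provided by Lemma~\ref{lem:bddIsCompact}, and a diagonal subsequence. The inequality then extends to all of $F_X$ by pointwise closure.

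A second care point is the gluing in Step 2, which passes couplings through the non-injective $p_n$. Here I would use a disintegration of $\mu_Y$ over $\mu_{X_n'}$, which exists on Polish spaces.
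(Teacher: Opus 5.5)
Your proof is correct, but it takes a genuinely different route from the paper's at both ends.

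\textbf{The paper's route.} The paper does not use Lemma~\ref{lemma:dconc-hausdorff-func}. It takes optimal couplings $\pi_n\in\Pi(\mu_X,\mu_{X_n})$ and an optimal coupling $\rho_n$ of $\mu_{Y_n}$ and $\mu_Y$. It pushes $\rho_n$ forward through the domination $Y_n\to X_n$ and glues along $X_n$ to obtain $\eta_n\in\Pi(\mu_X,\mu_Y)$, then passes to a limit $\eta$. This gives, for each $f\in F_X$, some $g\in\overline{F_Y}$ with $\kf^\eta(f\circ\pr_1,g\circ\pr_2)=0$, which is the same intermediate conclusion as your Step 3. From there the paper builds no map. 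It lets $G$ be the set of all such $g$, forms the quotient $Y/G$, and uses Lemma~\ref{lem:dconcOfDominatedSpace} to get $\dconc(X,Y/G)=0$, so that $X=Y/G\preceq Y$ in $\D$.

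\textbf{How your route differs.} Your reduction via Lemma~\ref{lemma:dconc-hausdorff-func} replaces the two-sided gluing with a single pullback through $p_n$. This is just the gluing lemma applied to $\pi_n$ and $(p_n,\id_Y)_*\mu_Y$ along $X_n'$, so no separate disintegration argument is needed. Your graph argument (a)--(d) then replaces the quotient construction. What this buys is an explicit $1$-Lipschitz domination $q\colon Y\to X$ with $\rho=(q,\id_Y)_*\mu_Y$. It also avoids relying on the fact that $\dconc(X,X')=0$ forces $X\simeq X'$, which the paper's last step uses implicitly and does not prove; in effect your (a)--(d) proves that step. The paper's ending is shorter, given its quotient machinery.

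\textbf{A simplification.} The point you flag as the main obstacle is not actually one. For each $f\in F_X$, the set $\{(x,y)\mid f(x)=g_f(y)\}$ is closed and has full $\rho$-measure, so it contains $\supp\rho$. Since $\supp\rho$ does not depend on $f$, the identity $f(x)=g_f(y)$ holds on $\supp\rho$ for all $f\in F_X$ simultaneously. The countable dense subfamily and the diagonal subsequence are therefore unnecessary, though harmless.
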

\begin{proof}
    By Theorem~\ref{thm:dconcMinimum}, there exist couplings
    \begin{equation*}
        \pi_n\in\Pi(\mu_X,\mu_{X_n}) \text{ and } \rho_n\in\Pi(\mu_Y,\mu_{Y_n})
    \end{equation*}
    such that
    \begin{equation*}
        \hausp{\kf^{\pi_n}}(F_X\circ\pr_1,F_{X_n}\circ\pr_2) \leq \dconc(X,X_n)
    \end{equation*}
    and
    \begin{equation*}
        \hausp{\kf^{\rho_n}}(F_{Y_n}\circ\pr_1,F_Y\circ\pr_2) \leq \dconc(Y_n,Y).
    \end{equation*}
    Take a domination $\phi_n\colon Y_n\to X_n$. Since $\rho'_n\coloneqq (\phi_n\times\id_Y)_*\rho_n$ is a coupling of $\mu_{X_n}$ and $\mu_Y$, there exists a Borel probability measure $\eta'_n$ on $X\times X'\times Y$ such that \begin{equation*}
        (\pr_{1,2})_*\eta'_n = \pi_n,\ (\pr_{1,2})_*\eta'_n = \rho'_n.
    \end{equation*} By the gluing lemma. We put $\eta_n \coloneqq (\pr_{1,3})_*\eta'_n$, which is a coupling between $X$ and $Y$. By Lemma~\ref{lem:CouplingCompact}, there exists a subsequence $\{\eta_{\iota(n)}\}_{n=1}^\infty$ and a Borel probability measure $\eta$ on $Y$ such that $\eta_{\iota(n)}\to\eta$ as $n\to\infty$. For any $f\in F_X$ and $n=1,2,\ldots$, there exists $f_n\in \overline{F_{X_{\iota(n)}}}$ such that\begin{equation*}
        \hausp{\kf^{\pi_{\iota(n)}}}(f\circ\pr_1,f_n\circ\pr_2) \leq \dconc(X,X_n)
    \end{equation*} from Lemma~\ref{lem:dconcFeatureFunction}. By $f_n\circ\phi_{\iota(n)}\in\overline{F_Y}$ and Lemma~\ref{lem:dconcFeatureFunction}, there also exists $g_n\in\overline{F_{Y_{\iota(n)}}}$ such that\begin{equation*}
        \hausp{\kf^{\rho_{\iota(n)}}}(f_n\circ\phi_{\iota(n)}\circ\pr_1,g_n\circ\pr_2) \leq \dconc(Y_n,Y).
    \end{equation*} Lemma~\ref{lem:KFDistanceLessThanDp} implies \begin{align*}
         & \kf^\eta(f\circ\pr_1,g_n\circ\pr_2)                                                                                                      \\
         & \leq \kf^{\eta_{\iota(n)}}(f\circ\pr_1,g_n\circ\pr_2) + 2\dpr(\eta_{\iota(n)},\eta)                                                      \\
         & \leq \kf^{\pi_{\iota(n)}}(f\circ\pr_1,f_n\circ\pr_2) + \kf^{\rho'_{\iota(n)}}(f_n\circ\pr_1,g_n\circ\pr_2) + 2\dpr(\eta_{\iota(n)},\eta) \\
         & \leq \dconc(X,X_n) + \dconc(Y_n,Y) + 2\dpr(\eta_{\iota(n)},\eta)\to 0
    \end{align*} as $n\to\infty$. Since $(\pr_2)_*\eta=\mu_Y$, we see that $\{g_n\}_{n=1}^\infty$ is a Cauchy sequence with respect to $\kf^Y$. By the completeness of $(\lip1(Y),\kf^Y)$, there exists $g\in \overline{F_Y}$ such that $\kf^\eta(f\circ\pr_1,g\circ\pr_2) = 0$. We put \begin{equation*}
        G \coloneqq \left\{g\in\overline{F_Y}\mmid f\in F_X, \kf^\eta(f\circ\pr_1,g\circ\pr_2) = 0\right\},\ X'\coloneqq Y/G.
    \end{equation*} Then, we have $\hausp{\kf^\eta}(F_X\circ\pr_1,G\circ\pr_2) = 0$. From Lemma~\ref{lem:dconcOfDominatedSpace}, we obtain $\dconc(X,X') = 0$. This completes the pr This completes the proof.
\end{proof}

\section{Box Distance}
The box distance is a complete distance between two mm-spaces induced by Gromov. In this section, we also define the box distance between two geometric data sets and obtain its completeness.

Let $X$ be a set, $Y$ a metric space, and $A$ non-empty subset of $X$. We define a pseudometric $d_A$ on the total set of maps from $X$ to $Y$ as \begin{equation*}
    \dinf{A}(f,g) \coloneqq \sup\left\{d_Y(f(x),g(x))\mmid x\in A\right\}.
\end{equation*} Similarly, we define $\dinf{\emptyset} \equiv 0$.

\subsection{Definition}
\begin{definition}[Box distance]\label{def:BoxDistance}
    Let $X,Y$ be two geometric data sets. For a coupling $\pi\in\Pi(\mu_X,\mu_Y)$, feature subsets $F\subset \lip1(X)$ and $G\subset \lip1(Y)$, and a closed subset $S\subset X\times Y$, we define\begin{align*}
        \Box^S_\pi(F,G) & \coloneqq \max\left\{1-\pi(S),2\hauspdinf{S}(F\circ\pr_1,G\circ\pr_2)\right\} \\
        \Box_\pi(F,G)   & \coloneqq \inf\left\{\Box^S_\pi(F,G)\mmid S\subset \F(X\times Y)\right\}.
    \end{align*} We define the \emph{box distance} between $X$ and $Y$ as \begin{equation*}
        \Box(X,Y) \coloneqq \inf\left\{\Box_\pi(F_X,F_Y)\mmid \pi\in\Pi(\mu_X,\mu_Y)\right\}.
    \end{equation*}
\end{definition}

\begin{proposition}\label{prop:BoxDistanceLessThanDConc}
    For any geometric data sets $X,Y$, we have \begin{equation*}
        \dconc(X,Y) \leq \Box(X,Y).
    \end{equation*}
\end{proposition}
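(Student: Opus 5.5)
By Theorem~\ref{thm:dconcMinimum} (or directly from the definition through Lemma~\ref{lem:parameterEqualCoupling}), we have $\dconc(X,Y)\le \hausp{\kf^\pi}(F_X\circ\pr_1,F_Y\circ\pr_2)$ for every $\pi\in\Pi(\mu_X,\mu_Y)$. So it suffices to prove the following for each coupling $\pi$ and each closed $S\subset X\times Y$:
\begin{equation*}
    \hausp{\kf^\pi}(F_X\circ\pr_1,F_Y\circ\pr_2)\le \Box^S_\pi(F_X,F_Y).
\end{equation*}
Once this holds, taking the infimum over $S$ and then over $\pi$ gives $\dconc(X,Y)\le\Box(X,Y)$.

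Fix $\pi$ and $S$, and put $\epsilon\coloneqq\Box^S_\pi(F_X,F_Y)$. Then $1-\pi(S)\le\epsilon$ and $\hauspdinf{S}(F_X\circ\pr_1,F_Y\circ\pr_2)\le\epsilon/2$.

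Fix any $\delta>0$ and any $f\in F_X$. By the definition of the Hausdorff distance for $\dinf{S}$, there exists $g\in F_Y$ with
\begin{equation*}
    \sup_{(x,y)\in S}|f(x)-g(y)|<\epsilon/2+\delta.
\end{equation*}
Hence the set $\{(x,y)\mid |f(x)-g(y)|>\epsilon/2+\delta\}$ is contained in $(X\times Y)\setminus S$, and its $\pi$-measure is at most $1-\pi(S)\le\epsilon$. If $\epsilon/2+\delta\le\epsilon+\delta$, which always holds, the Ky Fan definition gives $\kf^\pi(f\circ\pr_1,g\circ\pr_2)\le\epsilon+\delta$. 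The threshold $\epsilon/2+\delta$ is at most the candidate value $\epsilon+\delta$, and the exceptional measure $\epsilon$ is also at most $\epsilon+\delta$.

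Exchanging the roles of $F_X$ and $F_Y$ gives the symmetric inclusion. Therefore $\hausp{\kf^\pi}(F_X\circ\pr_1,F_Y\circ\pr_2)\le\epsilon+\delta$, and letting $\delta\to0$ finishes the argument.

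The only point needing care is the degenerate case $S=\emptyset$, where $\dinf{\emptyset}\equiv0$ makes the Hausdorff term vanish. There $\epsilon\ge1-\pi(\emptyset)=1$, and since $\kf^\pi\le1$ always, the inequality is trivial.

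Nothing else should be an obstacle. Note that the factor $2$ in the definition of $\Box$ is not even needed here: the argument really yields the stronger bound $\hausp{\kf^\pi}\le\max\{1-\pi(S),\hauspdinf{S}\}$.
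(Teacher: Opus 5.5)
Your proof is correct and follows essentially the same route as the paper. For fixed $\pi$ and closed $S$ you bound $\kf^\pi(f\circ\pr_1,g\circ\pr_2)$ by $\max\{1-\pi(S),\dinf{S}(f\circ\pr_1,g\circ\pr_2)\}$, pass to Hausdorff distances to get $\hausp{\kf^\pi}(F_X\circ\pr_1,F_Y\circ\pr_2)\le\Box^S_\pi(F_X,F_Y)$, and then take infima using the coupling characterization of $\dconc$ (Theorem~\ref{thm:dconcMinimum}); your explicit slack $\delta$, the $S=\emptyset$ remark, and the observation that the factor $2$ is unnecessary only make explicit what the paper's proof also shows.
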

\begin{proof}
    Take any coupling $\pi\in\Pi(\mu_X,\mu_Y)$ and a closed subset $S\subset X\times Y$. For $f\in\lip1(X)$ and $g\in\lip1(Y)$, we have
    \begin{align*}
        \kf^\pi(f\circ\pr_1,g\circ\pr_2) & \leq \max\{1-\pi(S),\hauspdinf{S}(f\circ\pr_1,g\circ\pr_2)\}    \\
                                         & \leq \max\{1-\pi(S), 2\hauspdinf{S}(f\circ\pr_1,g\circ\pr_2)\}.
    \end{align*} The arbitrariness of $f$ and $g$ implies that\begin{align*}
        \hausp{\kf^\pi}(F_X\circ\pr_1,F_Y\circ\pr_2) & \leq \max\{1-\pi(S), 2\hauspdinf{S}(F_X\circ\pr_1,F_Y\circ\pr_2)\} \\
                                                     & = \Box^S_\pi(F_X,F_Y).
    \end{align*} Since $\pi$ and $S$ are arbitrary, we see that\begin{align*}
        \dconc(X,Y) & = \inf\left\{\kf^\pi(f\circ\pr_1,g\circ\pr_2)\mmid \pi\in\Pi(\mu_X,\mu_Y)\right\}                       \\
                    & \leq \inf\left\{\Box^S_\pi(F_X,F_Y)\mmid \pi\in\Pi(\mu_X,\mu_Y), S\in\F(X\times Y)\right\} = \Box(X,Y).
    \end{align*} This completes the proof.
\end{proof}

\begin{lemma}
    Let $X,Y$ be two geometric data sets. For any coupling $\pi\in\Pi(\mu_X,\mu_Y)$, subsets $F\subset \lip1(X)$ and $G\subset \lip1(Y)$, we have\begin{equation*}
        \Box_\pi(F,G) = \inf\left\{\Box^K_\pi(F,G)\mmid K\in\K(X\times Y)\right\}.
    \end{equation*}
\end{lemma}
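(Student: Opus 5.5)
Since every compact set is closed, $\K(X\times Y)\subset\F(X\times Y)$, and so the infimum over compact sets is at least $\Box_\pi(F,G)$. The work is the reverse inequality. Fix a closed $S\subset X\times Y$ and $\epsilon>0$. I will produce a compact $K$ with $\Box^K_\pi(F,G)\leq\Box^S_\pi(F,G)+\epsilon$. The natural choice is $K\coloneqq S\cap K_0$, where $K_0$ comes from tightness of $\pi$. Since $X\times Y$ is a complete separable metric space, $\pi$ is tight, so there is a compact $K_0$ with $\pi(K_0)>1-\epsilon$. Then $K$ is compact, and
\begin{equation*}
    1-\pi(K)\leq 1-\pi(S)+\pi\bigl((X\times Y)\setminus K_0\bigr) < 1-\pi(S)+\epsilon .
\end{equation*}
This controls the measure term.

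The remaining task is to show that shrinking $S$ to $K\subset S$ does not increase the Hausdorff term:
\begin{equation*}
    \hauspdinf{K}(F\circ\pr_1,G\circ\pr_2)\leq\hauspdinf{S}(F\circ\pr_1,G\circ\pr_2).
\end{equation*}
For $A\subset S$ we have $\dinf{A}\leq\dinf{S}$ pointwise on pairs of functions. The Hausdorff distance is monotone in the underlying pseudometric: if $d\leq d'$ then $d(f,G)\leq d'(f,G)$ for each $f$, and symmetrically, so the sup–inf expression only decreases.

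One degenerate case needs attention: if $K=\emptyset$, then $\dinf{\emptyset}\equiv 0$ by the paper's convention, so the Hausdorff term vanishes and the monotonicity still holds. Combining the two estimates gives
\begin{equation*}
    \Box^K_\pi(F,G)\leq\max\bigl\{1-\pi(S)+\epsilon,\ 2\hauspdinf{S}(F\circ\pr_1,G\circ\pr_2)\bigr\}\leq\Box^S_\pi(F,G)+\epsilon .
\end{equation*}
Taking the infimum over $S$ and letting $\epsilon\to0$ finishes the proof.

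I expect no serious obstacle. The only points to verify carefully are the tightness of $\pi$, which holds because Borel probability measures on Polish spaces are tight (the same fact used in Proposition~\ref{prop:pointwiseEqKf}), and the monotonicity of the Hausdorff distance under a pointwise smaller extended pseudometric, including when values are infinite.
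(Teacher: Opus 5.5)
Your proof is correct and is essentially the paper's argument: the paper takes a compact $K\subset S$ with $\pi(S\setminus K)<\epsilon$ by inner regularity, and your $K=S\cap K_0$ obtained from tightness is the same device. It then combines $1-\pi(K)<1-\pi(S)+\epsilon$ with the monotonicity $\hauspdinf{K}\le\hauspdinf{S}$, exactly as you do. Your version also states the key estimate in the correct direction (compact $K$ on the left, $S$ plus $\epsilon$ on the right), whereas the paper's display writes it reversed and uses $\mu_X$ in place of $\pi$.
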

\begin{proof}
    Take any closed subset $S\in\F(X\times Y)$ and real number $\epsilon > 0$. By the inner regularity of $\mu_X$, there exists a compact subset $K\in\K(S)$ such that $\mu_X(S\setminus K) < \epsilon$. For any maps $f,g\in\lip1(X\times Y)$, we have\begin{align*}
        \max\{1-\pi(S), 2\hauspdinf{S}(f,g)\}
         & \leq \max\{1-\pi(K), 2\hauspdinf{K}(f,g)\} + \epsilon
    \end{align*} The arbitrariness of $f,g$ implies \begin{align*}
        \Box^S_\pi(F,G) & \leq \Box^K_\pi(F,G) + \epsilon.
    \end{align*} Since $S$ and $\epsilon$ are arbitrary, we obtain \begin{equation*}
        \Box_\pi(F,G) \leq \inf\left\{\Box^K_\pi(F,G)\mmid K\in\K(X\times Y)\right\}.
    \end{equation*} The opposite inequality on the other side is obvious.
\end{proof}

\begin{lemma}
    Let $X$ be a metric space. For any compact subset $K$ of $X$ and subset $F\subset\lip1(X)$, we have $\hauspdinf{K}(F,\overline{F}) = 0$.
\end{lemma}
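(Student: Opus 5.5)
Since $F\subset\overline{F}$, one half of the Hausdorff distance is free: every $f\in F$ lies in $\overline{F}$ and $\dinf{K}(f,f)=0$. So it suffices to show that for every $g\in\overline{F}$ and every $\epsilon>0$ there is some $f\in F$ with $\dinf{K}(f,g)\le\epsilon$. The plan is to combine pointwise approximation on a finite net with equicontinuity of $\lip1(X)$. If $K=\emptyset$, then $\dinf{\emptyset}\equiv0$ and there is nothing to prove, so assume $K\neq\emptyset$.

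Fix $g\in\overline{F}$ and $\epsilon>0$. By compactness, choose finitely many points $x_1,\ldots,x_M\in K$ with
\begin{equation*}
    K\subset\bigcup_{m=1}^M U_X(x_m;\epsilon).
\end{equation*}
Here $\overline{F}$ means the closure in the topology of pointwise convergence; on $\lip1(X)$ this agrees with the closure in measure by Proposition~\ref{prop:pointwiseEqKf} (when $X$ carries an mm-structure). In the pointwise topology, a basic neighbourhood of $g$ is determined by finitely many points. Hence there is $f\in F$ with $|f(x_m)-g(x_m)|<\epsilon$ for all $m=1,\ldots,M$.

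Now take any $x\in K$ and pick $m$ with $d_X(x,x_m)<\epsilon$. Since both $f$ and $g$ are $1$-Lipschitz,
\begin{equation*}
    |f(x)-g(x)|\le|f(x)-f(x_m)|+|f(x_m)-g(x_m)|+|g(x_m)-g(x)|<3\epsilon.
\end{equation*}
Therefore $\dinf{K}(f,g)\le3\epsilon$. Because $\epsilon$ was arbitrary, $g$ lies at $\dinf{K}$-distance $0$ from $F$, and hence $\hauspdinf{K}(F,\overline{F})=0$.

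The only step needing care is justifying the finite-point approximation. The cleanest route is to use that $\overline{F}$ is a pointwise closure, so that finite-set neighbourhoods are available directly. If instead one wants to work from the $\kf$-closure, this requires first passing through Proposition~\ref{prop:pointwiseEqKf}, which gives a sequence in $F$ converging pointwise to $g$. That sequence then converges uniformly on the finite set $\{x_1,\ldots,x_M\}$, and the same estimate applies.
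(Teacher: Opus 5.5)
Your proof is correct and is essentially the paper's argument: you take a finite $\epsilon$-net of $K$, approximate $g\in\overline{F}$ by some $f\in F$ on that net, and use the $1$-Lipschitz property to pass from the net to all of $K$. Your constant $3\epsilon$ is the right one (the paper writes $2\epsilon$, a harmless slip), and your remarks on the empty case and on which closure is meant are fine additions.
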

\begin{proof}
    Take any real number $\epsilon > 0$. There exists a finite subset $A$ of $K$ such that $K\subset B_X(A;\epsilon)$. For any map $f\in\overline{F}$, there exists a map $f'\in F$ such that $\dinf{A}(f,f') < \epsilon$. In particular, $\dinf{K}(f,f') < 2\epsilon$. By the arbitrariness of $f$, we have $\overline{F}\subset B_{\dinf{K}}(F;2\epsilon)$. The arbitrariness of $\epsilon$ implies $\hauspdinf{K}(F,\overline{F}) = 0$. This completes the proof.
\end{proof}

\begin{proposition}
    $(\D,\Box)$ is a metric space.
\end{proposition}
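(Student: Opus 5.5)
We have to check that $\Box$ is well defined on isomorphism classes, finite, symmetric, satisfies the triangle inequality, and separates points. Finiteness and nonnegativity are immediate: with $S=\emptyset$ we get $\Box^\emptyset_\pi=1$, so $0\le\Box\le1$. Symmetry follows by swapping coordinates: the flip $X\times Y\to Y\times X$ sends couplings to couplings and closed sets to closed sets, and it preserves the Hausdorff distance.

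\emph{Well-definedness.} By the preceding lemma, $\Box_\pi$ may be computed using compact $K$. On compact sets the last lemma gives $\hauspdinf{K}(F,\overline F)=0$, so replacing $F_X$ by $\overline{F_X}$ does not change $\Box$. Now let $\varphi\colon X\to X'$ be an isomorphism, so $\overline{F_{X'}}\circ\varphi=\overline{F_X}$ and $\varphi$ is 1-Lipschitz and measure preserving. We transport couplings via $(\varphi\times\id)_*$ and compact sets via $\varphi\times\id$. This gives $\Box(X',Y)\le\Box(X,Y)$, and the symmetric argument gives equality.

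\emph{Triangle inequality.} Take $\pi_1\in\Pi(\mu_X,\mu_Y)$ and $S_1$, together with $\pi_2\in\Pi(\mu_Y,\mu_Z)$ and $S_2$ (compact). By the gluing lemma, choose $\eta$ on $X\times Y\times Z$ with $(\pr_{1,2})_*\eta=\pi_1$ and $(\pr_{2,3})_*\eta=\pi_2$. Set $\pi\coloneqq(\pr_{1,3})_*\eta$ and let $S$ be the closure of $\pr_{1,3}\bigl((S_1\times Z)\cap(X\times S_2)\bigr)$; this set is already compact. Then
\begin{equation*}
    1-\pi(S)\le(1-\pi_1(S_1))+(1-\pi_2(S_2)).
\end{equation*}
For $f\in F_X$, pick $g\in F_Y$ with $\dinf{S_1}(f\circ\pr_1,g\circ\pr_2)\le a+\epsilon$, then $h\in F_Z$ with $\dinf{S_2}(g\circ\pr_1,h\circ\pr_2)\le b+\epsilon$. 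At $(x,z)\in S$, take a $y$ witnessing membership; then $|f(x)-h(z)|\le a+b+2\epsilon$, and this persists under closure by continuity. So the Hausdorff distance is subadditive. Since $\max\{p+q,r+s\}\le\max\{p,r\}+\max\{q,s\}$, we get $\Box(X,Z)\le\Box(X,Y)+\Box(Y,Z)$.

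\emph{Positivity, the main obstacle.} Suppose $\Box(X,Y)=0$. Choose $\pi_n$ and compact $S_n$ with $1-\pi_n(S_n)<1/n$ and $\hauspdinf{S_n}<1/n$. By Lemma~\ref{lem:CouplingCompact}, pass to a subsequence with $\pi_n\to\pi$. By Theorem~\ref{thm:weakHausdorffSubSequence}, also $S_n\to S$ in the weak Hausdorff sense. Upper semicontinuity as in Proposition~\ref{thm:weakHausdorffMeasureBound}, now with varying measures (handled through Prohorov convergence and closed neighbourhoods), should give $\pi(S)=1$. For each $f\in F_X$ we get $g_n\in F_Y$ with $|f(x)-g_n(y)|\le1/n$ on $S_n$. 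These $g_n$ lie in a set $\bdd$, so by Lemma~\ref{lem:bddIsCompact} a subsequence converges pointwise to some $g\in\overline{F_Y}$, and as in Lemma~\ref{lem:dconcFeatureFunction} we get $f(x)=g(y)$ on $S$; the symmetric statement holds as well. Consequently $d_X(x,x')=d_Y(y,y')$ on $S$. Since $\pi(S)=1$ and both supports are full, $S$ is the graph of an isometry $\varphi$ defined on a dense set, which extends to the whole space. This $\varphi$ pushes $\mu_X$ to $\mu_Y$ and satisfies $\overline{F_Y}\circ\varphi=\overline{F_X}$, so $X\simeq Y$. The delicate points are the limiting measure bound and the density of the projections of $S$. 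I would handle both using compactness on the large-measure sets $S_n$; if that fails, I would fall back on $\dconc\le\Box$ (Proposition~\ref{prop:BoxDistanceLessThanDConc}), reducing to showing that $\dconc(X,Y)=0$ implies $X\simeq Y$, via Theorem~\ref{thm:dconcMinimum} and the construction in Theorem~\ref{theorem:DconcPreservesDomination}.
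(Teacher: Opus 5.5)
Your treatment of well-definedness (compact $K$, $\hauspdinf{K}(F,\overline F)=0$, transport by $\varphi\times\id$), symmetry, and the triangle inequality (gluing lemma plus composing the two closed sets) matches the paper's proof. The difference is non-degeneracy, and your argument is correct but takes another route.

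The paper handles it in one line via Proposition~\ref{prop:BoxDistanceLessThanDConc}, i.e.\ $\dconc\le\Box$. That argument therefore rests on $\dconc$ separating isomorphism classes of $\D$, which this paper never proves; it is implicitly imported from Hanika--Schneider--Stumme. Your fallback is exactly this route.

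Your main argument is self-contained and exhibits the isomorphism explicitly. You extract $\pi$ and a closed $S$ with $\pi(S)=1$ on which each feature on one side coincides with a feature in the closure of the other. Hence $S$ is the graph of a measure-preserving isometry with $\overline{F_Y}\circ\varphi=\overline{F_X}$.

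Both steps you flag as delicate go through.
\begin{itemize}
\item \emph{Limiting measure bound.} With varying $\pi_n$, tightness of $\Pi(\mu_X,\mu_Y)$ gives a compact $K$ of $\pi_n$-mass $>1-\epsilon$ for all $n$. Weak Hausdorff convergence forces $S_n\cap K$ into the closed $\delta$-neighbourhood of $S$ for large $n$, and portmanteau then yields $\pi(S)\ge\limsup\pi_n(S_n)$. You can avoid this entirely: fix an optimal coupling first via Proposition~\ref{prop:BoxIsMinForCoupling}, whose proof does not use the metric property. Proposition~\ref{thm:weakHausdorffMeasureBound} then applies verbatim, as in Theorem~\ref{thm:BoxIsMin}.
\item \emph{Projections of $S$.} No extension is needed. $\pr_1(S)$ is closed, since $S$ is a closed graph of an isometry in a complete space. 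It is dense, since an open $U$ missing it has $\mu_X(U)=\pi(U\times Y)\le 1-\pi(S)=0$. So $\pr_1(S)=X$, and likewise $\pr_2(S)=Y$.
\end{itemize}
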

\begin{proof}
    Let us prove the well-definedness of $\Box$ as defined on $\D\times\D$. Take any geometric data sets $X_1,Y_1,Y_2$ with $Y_1\simeq Y_2$ and take an isomorphism $\phi\colon Y_1\to Y_2$. We obtain\begin{align*}
        \Box(X_1,Y_1) & = \inf\left\{\Box^K_\pi(F_{X_1},F_{Y_1})\mmid\pi\in\Pi(\mu_{X_1},\mu_{Y_1}),K\in\K(X_1\times Y_1)\right\}                                                      \\
                      & = \inf\left\{\Box^K_\pi(F_{X_1},\overline{F_{Y_1}})\mmid\pi\in\Pi(\mu_{X_1},\mu_{Y_1}),K\in\K(X_1\times Y_1)\right\}                                           \\
                      & = \inf\left\{\Box^K_{\pi}(F_{X_1},\overline{F_{Y_2}}\circ\phi)\mmid\pi\in\Pi(\mu_{X_1},\mu_{Y_1}),K\in\K(X_1\times Y_1)\right\}                                \\
                      & = \inf\left\{\Box^{(\id_X\times\phi)(K)}_{(\id_X\times\phi)_*\pi}(F_{X_1},\overline{F_{Y_2}})\mmid\pi\in\Pi(\mu_{X_1},\mu_{Y_1}),K\in\K(X_1\times Y_1)\right\} \\
                      & \geq \inf\left\{\Box^{L}_{\rho}(F_{X_1},\overline{F_{Y_2}})\mmid\rho\in\Pi(\mu_{X_1},\mu_{Y_1}),L\in\K(X_1\times Y_2)\right\} = \Box(X_1,Y_2).
    \end{align*} From the symmetry between $Y_1$ and $Y_2$ and the arbitrariness of $X_1$, we have\begin{equation*}
        \Box(X_1,Y_1) = \Box(X_1,Y_2) = \Box(Y_2,X_1) = \Box(Y_2,X_2) = \Box(X_2,Y_2)
    \end{equation*} even for any geometric data set $X_2$ with $X_1\simeq X_2$. We obtain the well-definedness of $\Box$.

    We prove that $\Box$ is a distance. The non-negativity and symmetry are clear from the definition, and the non-degeneracy follows from Proposition~\ref{prop:BoxDistanceLessThanDConc}. We prove the triangle inequality \begin{equation*}
        \Box(X,Z) \leq \Box(X,Y) + \Box(Y,Z)
    \end{equation*} for any geometric data sets $X,Y,Z$. For any real number $\epsilon > 0$, there exist \begin{align*}
        \pi_{X,Y}\in\Pi(\mu_X,\mu_Y) & , S_{X,Y}\in\F(X\times Y),           \\
        \pi_{Y,Z}\in\Pi(\mu_Y,\mu_Z) & \text{ and } S_{Y,Z}\in\F(Y\times Z)
    \end{align*} such that \begin{align*}
        \max\left\{1-\pi_{X,Y}(S_{X,Y}),2\hauspdinf{S}(F_X\circ\pr_1,F_Y\circ\pr_2)\right\} & \leq \Box(X,Y) + \epsilon, \\
        \max\left\{1-\pi_{Y,Z}(S_{Y,Z}),2\hauspdinf{S}(F_Y\circ\pr_1,F_Z\circ\pr_2)\right\} & \leq \Box(Y,Z) + \epsilon.
    \end{align*}
    By the gluing lemma, there exists a Borel probability measure $\rho$ on $X\times Y \times Z$ such that\begin{equation*}
        \left(\pr_{1,2}\right)_*\rho = \pi_{X,Y} \text{ and } \left(\pr_{2,3}\right)_*\rho = \pi_{Y,Z}.
    \end{equation*} Setting \begin{align*}
        \pi_{X,Z} \coloneqq \left(\pr_{1,3}\right)_*\rho,\ S_{X,Z} \coloneqq \overline{\left\{(x,z)\mmid(x,y)\in S_{X,Y},(y,z)\in S_{Y,Z}\right\}},
    \end{align*} we have\begin{align*}
        1-\pi_{X,Z}(S_{X,Z}) & = 1 - \rho((S_{X,Y}\times Z)\cap (X\times S_{Y,Z})) \\
                             & \leq 1-\pi_{X,Y}(S_{X,Y}) + 1-\pi_{Y,Z}(S_{Y,Z})    \\
                             & = \Box(X,Y) + \Box(Y,Z) + 2\epsilon.
    \end{align*} Let us evaluate the remaining term $\hauspdinf{S_{X,Z}}(F_X\circ\pr_1,F_Z\circ\pr_2)$. For $f\in F_X$, there exist $g\in F_Y$ and $h\in F_Z$ such that\begin{align*}
        d^{S_{X,Y}}(f\circ\pr_1,g\circ\pr_2) & \leq \hausp{d^{S_{X,Y}}}(F_X\circ\pr_1,F_Y\circ\pr_2) + \epsilon, \\
        d^{S_{Y,Z}}(g\circ\pr_1,h\circ\pr_2) & \leq \hausp{d^{S_{Y,Z}}}(F_Y\circ\pr_1,F_Z\circ\pr_2) + \epsilon.
    \end{align*} For any $(x,z)\in S_{X,Z}$, there exists $y\in Y$ such that $(x,y)\in S_{X,Y}$ and $(y,z)\in S_{Y,Z}$. Then,\begin{align*}
        |f(x)-h(z)| & \leq |f(x)-g(y)| + |g(y)-h(z)|                                                                                       \\
                    & \leq \hauspdinf{S_{X,Y}}(F_X\circ\pr_1,F_Y\circ\pr_2) + \hauspdinf{S_{Y,Z}}(F_Y\circ\pr_1,F_Z\circ\pr_2) + 2\epsilon \\
                    & \leq \frac{1}{2}\left(\Box(X,Y)+\Box(Y,Z)\right) + 2\epsilon.
    \end{align*} The arbitrariness of $x,z$ implies \begin{equation*}
        \dinf{S_{X,Z}}(f\circ\pr_1,h\circ\pr_2) \leq \frac{1}{2}\left(\Box(X,Y)+\Box(Y,Z)\right) + 2\epsilon.
    \end{equation*} From the arbitrariness of $f$ and the symmetry between $F_X$ and $F_Z$, we have\begin{equation*}
        \hausp{d^{S_{X,Z}}}(F_X\circ\pr_1,F_Z\circ\pr_2) \leq \frac{1}{2}\left(\Box(X,Y)+\Box(Y,Z)\right) + 2\epsilon.
    \end{equation*} Therefore, \begin{align*}
        \Box(X,Z) & \leq \max\left\{1-\pi_{X,Z}(S_{X,Z}),2\hauspdinf{S}(F_X\circ\pr_1,F_Z\circ\pr_2)\right\} \\
                  & \leq \Box(X,Y)+\Box(Y,Z) + 4\epsilon.
    \end{align*} By the arbitrariness of $\epsilon$, we obtain the triangle inequality. Then, $\Box$ is distance function on $\D$. This completes the proof.
\end{proof}
\begin{remark}
    $(\D,\Box)$ is not separable. Proposition~\ref{prop:BoxDistanceLessThanDConc} and non-separability of $\dconc$ prove it.
\end{remark}

\subsection{Characterization}
Let us see that Definition~\ref{def:BoxDistance} is an extension of the box distance between two mm-spaces, by using the characterization of the box distance in \cite[§4]{nakajima2022coupling}.

\begin{definition}[Distortion, {\cite[Definition 4.1]{nakajima2022coupling}}] Let $X,Y$ be two geometric data sets. For any $S\in\F(X\times Y)$, we define the \emph{distortion} of $S$ as
    \begin{equation*}
        \dis S \coloneqq \max\left\{\left|d_X(x_1,x_2)-d_Y(y_1,y_2)\right|\mmid (x_1,y_1),(x_2,y_2)\in S\right\}.
    \end{equation*} For any $\pi\in\Pi(\mu_X,\mu_Y)$, we define the \emph{distortion} of $\pi$ as\begin{align*}
        \dis\pi\coloneqq \inf\left\{\max\left\{1-\pi(S),\dis S\right\}\mmid S\in\F(X\times Y)\right\}.
    \end{align*}
\end{definition}

\begin{proposition}\label{prop:BoxDistanceLessThanDis}
    Let $X,Y$ be two geometric data sets and $\pi\in\Pi(\mu_X,\mu_Y)$. Then we have $\dis \pi \leq \Box_\pi(F_X,F_Y)$.
\end{proposition}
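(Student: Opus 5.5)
It suffices to show that for every closed $S\in\F(X\times Y)$ we have
\begin{equation*}
    \dis S \leq 2\hauspdinf{S}(F_X\circ\pr_1,F_Y\circ\pr_2).
\end{equation*}
Granting this, $\max\{1-\pi(S),\dis S\}\leq \Box^S_\pi(F_X,F_Y)$ for each $S$. Taking the infimum over $S\in\F(X\times Y)$ then gives $\dis\pi\leq\Box_\pi(F_X,F_Y)$. So the whole argument reduces to a pointwise inequality on a fixed $S$, which is one direction of Lemma~\ref{lemma:2Lip} with $\lip1$ replaced by the feature families.

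Fix $S$ and put $r\coloneqq\hauspdinf{S}(F_X\circ\pr_1,F_Y\circ\pr_2)$. We may assume $r<\infty$, and $S\neq\emptyset$ since otherwise $\dis S$ is trivially $0$. Take $(x_1,y_1),(x_2,y_2)\in S$ and $\epsilon>0$. By symmetry it is enough to show $d_X(x_1,x_2)\leq d_Y(y_1,y_2)+2r+C\epsilon$ for some constant $C$.

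Since $d_X=d_{F_X}$, choose $f\in F_X$ with $d_X(x_1,x_2)<|f(x_1)-f(x_2)|+\epsilon$. By the definition of the Hausdorff distance with respect to $\dinf{S}$, there is $g\in F_Y$ with $\dinf{S}(f\circ\pr_1,g\circ\pr_2)<r+\epsilon$. Evaluating at the two points of $S$ gives $|f(x_i)-g(y_i)|<r+\epsilon$ for $i=1,2$. Hence
\begin{equation*}
    d_X(x_1,x_2) < |g(y_1)-g(y_2)| + 2r + 3\epsilon \leq d_Y(y_1,y_2)+2r+3\epsilon,
\end{equation*}
where the last inequality uses $F_Y\subset\lip1(Y,d_Y)$, as in the remark after Proposition~\ref{thm:lip1=d}. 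Exchanging the roles of $X$ and $Y$ and letting $\epsilon\to0$ gives $|d_X(x_1,x_2)-d_Y(y_1,y_2)|\leq 2r$. Taking the supremum over pairs in $S$ yields $\dis S\leq 2r$.

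I expect no real obstacle. The only points needing care are bookkeeping: the definition of $\dis S$ is written with $\max$, which should be read as a supremum, and the cases $S=\emptyset$ or $r=\infty$ are trivial. The key idea is that the factor $2$ arises exactly from using the approximation at the two points $(x_1,y_1)$ and $(x_2,y_2)$.
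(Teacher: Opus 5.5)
Your proof is correct and follows the paper's argument. Both rest on the same three-term triangle inequality at the two points $(x_1,y_1),(x_2,y_2)\in S$, using an $f\in F_X$ that nearly realizes $d_X(x_1,x_2)$, a $g\in F_Y$ with $\dinf{S}(f\circ\pr_1,g\circ\pr_2)$ close to $\hauspdinf{S}(F_X\circ\pr_1,F_Y\circ\pr_2)$, and the $1$-Lipschitz property of $g$. The only difference is organizational: you prove $\dis S\leq 2\hauspdinf{S}(F_X\circ\pr_1,F_Y\circ\pr_2)$ for every closed $S$ and then take the infimum, whereas the paper first fixes an $\epsilon$-optimal $S$ and carries the $\epsilon$ through. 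Nothing of substance changes.
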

\begin{proof}
    For any real number $\epsilon > 0$, there exists $S\in\F(X\times Y)$ such that\begin{equation*}
        \Box_\pi(F_X,F_Y) \leq \max\left\{1-\pi(S),2\hauspdinf{S}(F_X\circ\pr_1,F_Y\circ\pr_2)\right\} + \epsilon.
    \end{equation*} Take any $(x_1,y_1),(x_2,y_2)\in S$. There exist $f\in F_X$ and $g\in F_Y$ such that \begin{align*}
        d_X(x_1,x_2)                      & \leq |f(x_1)-f(x_2)| + \epsilon,                            \\
        \dinf{S}(f\circ\pr_1,g\circ\pr_2) & \leq \hauspdinf{S}(F_X\circ\pr_1,F_Y\circ\pr_2) + \epsilon.
    \end{align*} From the triangle inequality, we have\begin{align*}
        d_X(x_1,x_2) & < |f(x_1)-f(x_2)| + \epsilon                                                 \\
                     & \leq |f(x_1)-g(y_1)| + |g(y_1)-g(y_2)| + |g(y_2)-f(x_2)| + \epsilon          \\
                     & \leq |g(y_1)-g(y_2)| + 2\dinf{S}(f\circ\pr_1,g\circ\pr_2) + \epsilon         \\
                     & \leq d_Y(y_1,y_2) + 2\hauspdinf{S}(F_X\circ\pr_1,F_Y\circ\pr_2) + 3\epsilon.
    \end{align*} The symmetry between $X$ and $Y$ proves\begin{equation*}
        |d_X(x_1,x_2) - d_Y(y_1,y_2)| \leq 2\hauspdinf{S}(F_X\circ\pr_1,F_Y\circ\pr_2) + 3\epsilon.
    \end{equation*} By the arbitrariness of $(x_1,y_1)$ and $(x_2,y_2)$, we obtain\begin{equation*}
        \dis S \leq 2\hauspdinf{S}(F_X\circ\pr_1,F_Y\circ\pr_2) + 3\epsilon.
    \end{equation*} The definition of $S$ proves\begin{align*}
        \dis\pi & \leq \max\left\{1-\pi(S),\dis S\right\}                                                  \\
                & \leq \max\left\{1-\pi(S),2\hauspdinf{S}(F_X\circ\pr_1,F_Y\circ\pr_2) + 3\epsilon\right\} \\
                & \leq \Box_\pi(F_X,F_Y) + 4\epsilon.
    \end{align*} Since $\epsilon$ is arbitrary, $\dis \pi \leq \Box_\pi(F_X,F_Y)$.  This completes the proof.
\end{proof}

\begin{proposition}
    Let $X,Y$ be two geometric data sets and $\pi\in\Pi(\mu_X,\mu_Y)$ a coupling. Then we have $\dis \pi = \Box_\pi(\lip1(X),\lip1(Y))$.
\end{proposition}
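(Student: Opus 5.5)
The inequality $\dis\pi \leq \Box_\pi(\lip1(X),\lip1(Y))$ should come from Proposition~\ref{prop:BoxDistanceLessThanDis}. Its proof uses only two facts about the feature families: $d_X(x_1,x_2) = \sup_{f}|f(x_1)-f(x_2)|$ for $f$ in the family on $X$, and the analogous identity on $Y$. By Proposition~\ref{thm:lip1=d} these identities hold for $\lip1(X)$ and $\lip1(Y)$. Equivalently, we can apply Proposition~\ref{prop:BoxDistanceLessThanDis} to the geometric data sets $(X,\lip1(X),\mu_X)$ and $(Y,\lip1(Y),\mu_Y)$. They have the same metrics and measures as $X$ and $Y$, hence the same couplings and the same $\dis\pi$. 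This gives the first inequality.

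For the reverse inequality $\Box_\pi(\lip1(X),\lip1(Y)) \leq \dis\pi$, it suffices to show that every $S\in\F(X\times Y)$ satisfies
\begin{equation*}
    2\hauspdinf{S}(\lip1(X)\circ\pr_1,\lip1(Y)\circ\pr_2) \leq \dis S .
\end{equation*}
Then $\Box^S_\pi(\lip1(X),\lip1(Y)) \leq \max\{1-\pi(S),\dis S\}$, and taking the infimum over $S$ gives the claim. If $S=\emptyset$, then $\dinf{\emptyset}\equiv 0$ and there is nothing to prove. If $\dis S=\infty$, the bound is trivial. So assume $S\neq\emptyset$ and $D\coloneqq\dis S<\infty$. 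By the symmetry between $X$ and $Y$, it is enough to show that for every $f\in\lip1(X)$ there is a $g\in\lip1(Y)$ with $|f(x)-g(y)|\leq D/2$ for all $(x,y)\in S$.

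The key step, and the main obstacle, is constructing $g$. I would use a McShane-type extension through $S$:
\begin{equation*}
    g(y) \coloneqq \inf\bigl\{ f(x') + d_Y(y,y') \mid (x',y')\in S \bigr\} + \frac{D}{2}.
\end{equation*}
First, $g$ is finite. Fix $(x_0,y_0)\in S$. For any $(x',y')\in S$, the distortion bound gives $d_Y(y_0,y') \geq d_X(x_0,x')-D$, so
\begin{equation*}
    f(x')+d_Y(y,y') \geq f(x') + d_X(x_0,x') - D - d_Y(y,y_0) \geq f(x_0) - D - d_Y(y,y_0).
\end{equation*}
Second, $g$ is an infimum of $1$-Lipschitz functions of $y$ and is finite, so $g\in\lip1(Y)$.

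Third, fix $(x,y)\in S$. Choosing $(x',y')=(x,y)$ in the infimum gives $g(y)\leq f(x)+D/2$. For the lower bound, every $(x',y')\in S$ satisfies
\begin{equation*}
    f(x')+d_Y(y,y') \geq f(x') + d_X(x,x') - D \geq f(x) - D,
\end{equation*}
so $g(y)\geq f(x)-D/2$. Therefore $\dinf{S}(f\circ\pr_1,g\circ\pr_2)\leq D/2$.

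Combining this with the symmetric construction starting from $g\in\lip1(Y)$ yields the Hausdorff bound, and hence the reverse inequality. Together with the first step, this proves $\dis\pi=\Box_\pi(\lip1(X),\lip1(Y))$. I expect the only delicate points to be finiteness of $g$ and the exact constant $D/2$, both of which the computation above handles.
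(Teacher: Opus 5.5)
Your proposal is correct and follows essentially the same route as the paper. You obtain the inequality $\dis\pi$ on the smaller side from Proposition~\ref{prop:BoxDistanceLessThanDis} applied to $(X,\lip1(X),\mu_X)$ and $(Y,\lip1(Y),\mu_Y)$. For the reverse inequality you use the same McShane-type function $g(y)=\frac{1}{2}\dis S+\inf_{(x',y')\in S}\bigl(f(x')+d_Y(y',y)\bigr)$ together with the symmetric construction.

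Your version is in fact slightly more careful than the paper's. You check explicitly that $g$ is finite, via the distortion bound at a base point of $S$. You also justify $g\in\lip1(Y)$ as a finite infimum of $1$-Lipschitz functions, whereas the paper skips finiteness and writes its Lipschitz estimate with an $\inf$ where a $\sup$ is needed.
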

\begin{proof}
    By Proposition~\ref{prop:BoxDistanceLessThanDis}, we see $\dis \pi \leq \Box_\pi(\lip1(X),\lip1(Y))$. It is sufficient to prove that $\Box_\pi(\lip1(X),\lip1(Y)) \leq \dis \pi$.

    For any real number $\epsilon > 0$, there exists $S\in\F(X\times Y)$ such that\begin{equation*}
        \max\left\{1-\pi(S),\dis S\right\} \leq \dis\pi + \epsilon.
    \end{equation*} Let us evaluate $\hauspdinf{S}(\lip1(X)\circ\pr_1,\lip1(Y)\circ\pr_2)$. Take any $f\in\lip1(X)$ and define $g\colon Y\to\R$ by
    \begin{equation*}
        g(y) \coloneqq \frac{1}{2}\dis S + \inf_{(x,z)\in S} \left(f(x) + d_Y(z,y)\right),\quad y\in Y.
    \end{equation*}
    We prove $g\in\lip1(Y)$. For any $y_1,y_2\in Y$, \begin{align*}
        g(y_1) - g(y_2) & =  \inf_{(x,z)\in S} \left(f(x) + d_Y(z,y_1)\right) - \inf_{(x,z)\in S} \left(f(x) + d_Y(z,y_2)\right) \\
                        & \leq  \inf_{(x,z)\in S} \left(f(x) + d_Y(z,y_1) - f(x) - d_Y(z,y_2)\right)                             \\
                        & = \inf_{(x,z)\in S} \left(d_Y(z,y_1) - d_Y(z,y_2)\right) \leq d_Y(y_1,y_2).
    \end{align*} The arbitrariness of $y_1,y_2$ implies $g\in \lip1(Y)$. Let us see $\dinf{S}(f\circ\pr_1,g\circ\pr_2) \leq (\dis S) / 2$. For any $(x,y)\in S$, we have \begin{align*}
        g(y) - f(x) & \leq \frac{1}{2}\dis S + \left(f(x) + d_Y(y,y)\right) - f(x) = \frac{1}{2}\dis S                         \\
        f(x) - g(y) & = f(x) - \left(\frac{1}{2}\dis S + \inf_{(x',y')\in S} \left(f(x') + d_Y(y',y)\right)\right)             \\
                    & \leq \sup_{(x',y')\in S} \left(f(x) - f(x') - d_Y(y',y)\right) - \frac{1}{2}\dis S                       \\
                    & \leq \sup_{(x',y')\in S} \left(d_X(x,x') - d_Y(y,y')\right) - \frac{1}{2}\dis S  \leq \frac{1}{2}\dis S.
    \end{align*} By the arbitrariness of $x,y$, we have $\dinf{S}(f\circ\pr_1,g\circ\pr_2) \leq (\dis S)/2$. The symmetry between $X$ and $Y$ implies\begin{equation*}
        \hauspdinf{S}(\lip1(X)\circ\pr_1,\lip1(Y)\circ\pr_2) \leq \frac{1}{2}\dis S + \epsilon.
    \end{equation*}

    Finally, we have\begin{align*}
        \Box_\pi(\lip1(X),\lip1(Y)) & \leq \max\left\{1-\pi(S),2\hauspdinf{S}(\lip1(X)\circ\pr_1,\lip1(Y)\circ\pr_2)\right\} \\
                                    & \leq \max\left\{1-\pi(S),\dis S\right\} + 2\epsilon < \dis\pi + 3\epsilon.
    \end{align*} The arbitrariness of $\epsilon$ proves $\Box_\pi(\lip1(X),\lip1(Y)) \leq \dis\pi$. This completes the proof.
\end{proof}

In \cite[Proposition 4.4]{nakajima2022coupling}, the box distance is defined even between mm-spaces, and the following proposition is known:
\begin{proposition}[{\cite[Proposition 4.4]{nakajima2022coupling}}]\label{prop:Box_Use_Coupling} For mm-spaces $X,Y$, we have\begin{equation*}
        \Box(X,Y) = \inf\left\{\dis\pi\mmid\pi\in\Pi(\mu_X,\mu_Y)\right\}.
    \end{equation*}
\end{proposition}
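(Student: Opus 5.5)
This is Nakajima's characterization, cited from \cite[Proposition 4.4]{nakajima2022coupling}, and the paper presumably takes it as known. If I were to reprove it, I would start from Gromov's parameter definition of $\Box(X,Y)$ for mm-spaces. It is the infimum of those $\epsilon\ge 0$ for which there are parameters $\phi,\psi$ of $\mu_X,\mu_Y$ and a Borel set $I_0\subset I$ with $\lambda(I_0)\ge 1-\epsilon$ and $|\phi^*d_X-\psi^*d_Y|\le\epsilon$ on $I_0\times I_0$. I would then transfer this to couplings via Lemma~\ref{lem:parameterEqualCoupling}.

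For the inequality $\inf_\pi\dis\pi\le\Box(X,Y)$, fix admissible $\phi,\psi,I_0$ and set $\pi\coloneqq(\phi,\psi)_*\lambda$. By inner regularity of $\lambda$, shrink $I_0$ to a compact set on which $\phi$ and $\psi$ are continuous; this costs an arbitrarily small amount of measure and uses Lusin's theorem. Let $S$ be the image of this compact set under $(\phi,\psi)$. Then $S$ is compact, hence closed, and $\pi(S)\ge\lambda(I_0)-\delta$. Any two points of $S$ have the form $(\phi(s),\psi(s))$ and $(\phi(t),\psi(t))$ with $s,t\in I_0$, so $\dis S\le\epsilon$. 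This gives $\dis\pi\le\epsilon+\delta$.

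For the reverse inequality, take $\pi$ and a closed $S$ with $\max\{1-\pi(S),\dis S\}<\dis\pi+\epsilon$. By Lemma~\ref{lem:parameterEqualCoupling}, write $\pi=(\phi,\psi)_*\lambda$ and put $I_0\coloneqq(\phi,\psi)^{-1}(S)$. Then $\lambda(I_0)=\pi(S)$, and for $s,t\in I_0$ the pairs $(\phi(s),\psi(s))$ and $(\phi(t),\psi(t))$ lie in $S$, which gives the distortion bound $|\phi^*d_X-\psi^*d_Y|\le\dis S$ on $I_0\times I_0$. So the parameter definition yields $\Box(X,Y)\le\dis\pi+\epsilon$.

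The main obstacle is the first direction. There one must check that Lusin's theorem applies to maps into Polish spaces, and that the induced set $S$ is closed while losing only $\delta$ in measure. Everything else is bookkeeping.
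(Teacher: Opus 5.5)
Your proof is correct. The paper gives no argument of its own for this statement; it only imports it from Nakajima. Your two-inequality argument, which passes through Gromov's parameter definition of $\Box$ and Lemma~\ref{lem:parameterEqualCoupling}, is a valid way to establish it.

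The step you flag as the main obstacle can be made easier, because Lusin's theorem is not needed. Take $S\coloneqq\overline{(\phi,\psi)(I_0)}$ directly.
\begin{itemize}
\item Since $(\phi,\psi)^{-1}(S)\supset I_0$, you get $\pi(S)\ge\lambda(I_0)\ge 1-\epsilon$, with no $\delta$ lost.
\item The function $((x_1,y_1),(x_2,y_2))\mapsto|d_X(x_1,x_2)-d_Y(y_1,y_2)|$ is continuous on $(X\times Y)^2$. Hence the distortion of a set equals that of its closure, and $\dis S\le\epsilon$ follows immediately from the hypothesis on $I_0\times I_0$.
\end{itemize}
So the first inequality is as routine as the second. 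Your Lusin-based construction, which produces a compact $S$, is also fine; it just does more work than necessary.
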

\noindent In particular, we see\begin{equation*}
    \Box(X,Y) = \Box(\D(X),\D(Y))
\end{equation*} for any mm-spaces $X,Y$. For any geometric data sets $X,Y$, we have also \begin{equation*}
    \Box(\X(X),\X(Y)) \leq \Box(X,Y).
\end{equation*}

\begin{remark}
    The equality does not necessarily hold in the above inequality. In fact, for $n\in\N$, the introduced mm-space $\X(\ast_{\{n\}})$ of the singleton geometric data set $\ast_{\{n\}}$ equals $\ast$, where $\ast$ is the one-point mm-space $(\ast,0,\delta_\ast)$. For $m\in\N$ with $n\neq m$, \begin{align*}
        \Box(\X(\ast_{\{n\}}),\X(\ast_{\{m\}})) & = \Box(\ast,\ast) = 0 < 1 = \dconc(\ast_{\{n\}},\ast_{\{m\}}) \\ &\leq \Box(\ast,\ast) \leq \Box(\ast_{\{n\}},\ast_{\{m\}}).
    \end{align*}
\end{remark}

Proposition~\ref{prop:Box_Use_Coupling} means that the box distance between geometric data sets defined in this paper is a generalization of the box distance between mm-spaces due to Gromov. Next, let us characterize the box distance in this way as described in [6, §4].

\begin{lemma}\label{lem:BoxIsContinuousByCoupling}
    Let $X,Y$ be two geometric data sets and $\pi,\rho\in\Pi(\mu_X,\mu_Y)$ couplings. Then we have\begin{equation*}
        \left|\Box_\pi(X,Y) - \Box_\rho(X,Y)\right| \leq 4\dpr(\pi,\rho),
    \end{equation*} where $X\times Y$ is equipped with the $l^\infty$-product metric.
\end{lemma}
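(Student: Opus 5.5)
By symmetry between $\pi$ and $\rho$ it suffices to prove the one-sided bound
\begin{equation*}
    \Box_\pi(F_X,F_Y)\leq\Box_\rho(F_X,F_Y)+4\dpr(\pi,\rho).
\end{equation*}
The plan is to take a near-optimal closed set $S$ for $\rho$ and fatten it into a closed set $S'$ that has large $\pi$-measure. Fattening costs a little in the Hausdorff term, and we control that cost using that all features are $1$-Lipschitz.

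Fix $\epsilon>0$ and choose $\delta$ with $\dpr(\pi,\rho)<\delta<\dpr(\pi,\rho)+\epsilon$. Take a closed set $S\in\F(X\times Y)$ with $\Box^S_\rho(F_X,F_Y)<\Box_\rho(F_X,F_Y)+\epsilon$. Put
\begin{equation*}
    S'\coloneqq\overline{U(S;\delta)}
\end{equation*}
in the $l^\infty$-product metric. By the definition of the Prohorov distance,
\begin{equation*}
    \pi(S')\geq\pi(U(S;\delta))\geq\rho(S)-\delta,
\end{equation*}
and hence
\begin{equation*}
    1-\pi(S')\leq1-\rho(S)+\delta\leq\Box_\rho(F_X,F_Y)+\epsilon+\delta.
\end{equation*}

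Next we bound the Hausdorff term on $S'$. Take $f\in F_X$ and choose $g\in F_Y$ with
\begin{equation*}
    \dinf{S}(f\circ\pr_1,g\circ\pr_2)\leq\hauspdinf{S}(F_X\circ\pr_1,F_Y\circ\pr_2)+\epsilon.
\end{equation*}
Let $(x',y')\in S'$. Then there is $(x,y)\in S$ with $d_X(x,x')\leq\delta$ and $d_Y(y,y')\leq\delta$, up to an arbitrarily small slack coming from the closure. Since $f$ and $g$ are $1$-Lipschitz (by the remark after Proposition~\ref{thm:lip1=d}, features are $1$-Lipschitz for $d_X$ and $d_Y$),
\begin{equation*}
    |f(x')-g(y')|\leq|f(x)-g(y)|+2\delta.
\end{equation*}
The same argument works with the roles of $X$ and $Y$ exchanged. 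Therefore
\begin{equation*}
    2\hauspdinf{S'}(F_X\circ\pr_1,F_Y\circ\pr_2)\leq2\hauspdinf{S}(F_X\circ\pr_1,F_Y\circ\pr_2)+4\delta+2\epsilon.
\end{equation*}

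Combining the two bounds,
\begin{equation*}
    \Box_\pi(F_X,F_Y)\leq\Box^{S'}_\pi(F_X,F_Y)\leq\Box_\rho(F_X,F_Y)+4\dpr(\pi,\rho)+O(\epsilon),
\end{equation*}
and letting $\epsilon\to0$ gives the one-sided bound.

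The only delicate point is the factor $4$. It arises because the Hausdorff term enters $\Box^S_\pi$ with the factor $2$, while the fattening costs $2\delta$ (namely $\delta$ from each coordinate). The measure term costs only $\delta$, so $4\delta$ dominates the maximum. One also has to handle two technicalities: the closure in $S'$ (harmless, by continuity of $f$ and $g$), and the degenerate cases where $S$ is empty or $\delta$ is large, in which the bound is trivial since $\Box_\pi\leq1$.
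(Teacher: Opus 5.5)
Your proof is correct and is essentially the paper's argument. You fatten a near-optimal closed set by slightly more than the Prohorov distance, so the measure term worsens by $\delta$ and each feature pair worsens by $2\delta$ in $\dinf{S'}$, which gives the factor $4$ after doubling. The only cosmetic differences are that you start from a near-optimal set for $\rho$ rather than $\pi$, and you use $\overline{U(S;\delta)}$ with $\delta>\dpr(\pi,\rho)$ instead of the closed neighbourhood of radius $\dpr(\pi,\rho)+\epsilon$.
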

\begin{proof}
    For any real number $\epsilon > 0$, there exists $S_\pi\in\F(X\times Y)$ such that \begin{equation*}
        \max\left\{1-\pi(S_\pi),2\hauspdinf{S_\pi}(F_X\circ\pr_1,F_Y\circ\pr_2)\right\} < \Box_\pi(F_X,F_Y) + \epsilon.
    \end{equation*} Setting $S_\rho \coloneqq B_{X\times Y}(S;\dpr(\pi,\rho) + \epsilon)$, we see\begin{align*}
        1 - \rho(S_\rho) & \leq 1 - (\pi(S_\pi) - (\dpr(\pi,\rho) + \epsilon)) \\
                         & = 1 - \pi(S_\pi) + \dpr(\pi,\rho) + \epsilon.
    \end{align*} For any $f\in F_X,g\in F_Y$, we have\begin{align*}
        \dinf{S_\rho}(f\circ\pr_1,g\circ\pr_2) & \leq \dinf{S_\pi}(f\circ\pr_1,g\circ\pr_2) + 2(\dpr(\pi,\rho) + \epsilon).
    \end{align*} The arbitrariness of $f,g$ implies\begin{align*}
        \Box^S_\rho(F_X,F_Y) & \leq \max\left\{1-\pi(S_\rho),2\hauspdinf{S_\rho} (F_X\circ\pr_1,F_Y\circ\pr_2)\right\}                                          \\
                             & \leq \max\left\{1-\pi(S_\rho),2\left(\hauspdinf{S_\rho}(F_X\circ\pr_1,F_Y\circ\pr_2)\right)\right\}+4(\dpr(\pi,\rho) + \epsilon) \\
                             & < \Box_\pi(F_X,F_Y) + 4\dpr(\pi,\rho) + 5\epsilon.
    \end{align*} The symmetry between $\pi$ and $\rho$ and the arbitrariness of $\epsilon$ prove\begin{equation*}
        \left|\Box_\pi(X,Y) - \Box_\rho(X,Y)\right| \leq 4\dpr(\pi,\rho).
    \end{equation*} This completes the proof.
\end{proof}

\begin{proposition}\label{prop:BoxIsMinForCoupling}
    For geometric data sets $X,Y,$ we have \begin{equation*}
        \Box(X,Y) = \min\left\{\Box_\pi(F_X,F_Y)\mmid\pi\in\Pi(\mu_X,\mu_Y)\right\}.
    \end{equation*}
\end{proposition}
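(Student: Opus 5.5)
The plan mirrors the proof of Theorem~\ref{thm:dconcMinimum}: combine compactness of the coupling set with continuity of $\pi\mapsto\Box_\pi(F_X,F_Y)$. By Definition~\ref{def:BoxDistance}, $\Box(X,Y)$ is the infimum of $\Box_\pi(F_X,F_Y)$ over $\pi\in\Pi(\mu_X,\mu_Y)$. So it suffices to show that this infimum is attained.

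First, I take a minimizing sequence $\pi_n\in\Pi(\mu_X,\mu_Y)$ with $\Box_{\pi_n}(F_X,F_Y)\to\Box(X,Y)$. By Lemma~\ref{lem:CouplingCompact}, $\Pi(\mu_X,\mu_Y)$ is $\dpr$-compact, with $X\times Y$ carrying the $l^\infty$-product metric. Hence there is a subsequence $\pi_{\iota(n)}$ and a coupling $\pi\in\Pi(\mu_X,\mu_Y)$ with $\dpr(\pi_{\iota(n)},\pi)\to 0$.

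Second, I invoke Lemma~\ref{lem:BoxIsContinuousByCoupling}, which gives
\begin{equation*}
    \Box_\pi(F_X,F_Y) \leq \Box_{\pi_{\iota(n)}}(F_X,F_Y) + 4\dpr(\pi_{\iota(n)},\pi).
\end{equation*}
Letting $n\to\infty$ yields $\Box_\pi(F_X,F_Y)\leq\Box(X,Y)$. The reverse inequality holds by definition, so the infimum is a minimum, attained at $\pi$.

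The only delicate point is the application of Lemma~\ref{lem:BoxIsContinuousByCoupling}. Its proof enlarges the near-optimal closed set $S_\pi$ to a neighborhood $S_\rho$ of radius slightly more than $\dpr(\pi,\rho)$. It then uses the $1$-Lipschitz property of the features in the $l^\infty$ metric to control $\dinf{S_\rho}(f\circ\pr_1,g\circ\pr_2)$ by $\dinf{S_\pi}(f\circ\pr_1,g\circ\pr_2)$ plus twice that radius. The Prohorov inequality $\rho(S_\rho)\geq\pi(S_\pi)-\dpr(\pi,\rho)-\epsilon$ controls the mass term. If needed I would take the closure of that neighborhood so the set is closed, which costs nothing. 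Since we may use the lemma as stated, the argument reduces to the compactness and continuity steps above.
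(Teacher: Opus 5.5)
Your proposal is correct and matches the paper's proof, which simply cites the $\dpr$-compactness of $\Pi(\mu_X,\mu_Y)$ (Lemma~\ref{lem:CouplingCompact}) and the $4$-Lipschitz dependence of $\Box_\pi(F_X,F_Y)$ on $\pi$ (Lemma~\ref{lem:BoxIsContinuousByCoupling}). Your minimizing-sequence argument just spells this out, and, as you noticed, only the one-sided (lower semicontinuity) half of that continuity is actually needed.
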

\begin{proof}
    The proposition follows from Lemmas~\ref{lem:CouplingCompact} and \ref{lem:BoxIsContinuousByCoupling}.
\end{proof}

\begin{lemma}\label{lem:MinBoxMap}
    Let $X$ be a metric space and $S_n$, $S$ closed sets on $X$, $n = 1,2,\ldots$~. We assume that $S_n$ converges to $S$ in the weak Hausdorff sense. Then, for any subsets $F,G\subset\lip1(X)$, there exists a map $u\colon \overline{F}\to\overline{G}$ such that\begin{equation*}
        \dinf{S}(f,u(f)) \leq \liminf_{n\to\infty} \hauspdinf{S_n}(F,G)
    \end{equation*} for any $f\in \overline{F}$.
\end{lemma}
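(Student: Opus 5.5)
The plan mirrors the proof of Lemma~\ref{lem:dconcFeatureFunction}: a diagonal approximation, then a compactness argument in $\lip1(X)$ via Lemma~\ref{lem:bddIsCompact} and the Arzelà--Ascoli theorem.

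Put $\ell \coloneqq \liminf_{n\to\infty}\hauspdinf{S_n}(F,G)$. If $\ell=+\infty$, or if $S=\emptyset$ (where $\dinf{\emptyset}\equiv 0$), choose $u(f)$ arbitrarily in $\overline{G}$; this needs $G\neq\emptyset$, which we assume as for feature sets. So assume $\ell<\infty$ and $S\neq\emptyset$. Pass to a subsequence with $\hauspdinf{S_n}(F,G)\to\ell$. Weak Hausdorff convergence survives passing to subsequences: condition (1) clearly persists, and a $\liminf$ over a subsequence can only increase.

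Fix $f\in\overline{F}$ and choose $u(f)$ independently for each $f$, using choice. Since $f\in\overline F$ is a pointwise limit of elements of $F$, the first step is to pick $f_n\in F$ with $f_n\to f$ pointwise, or at least uniformly on larger and larger finite sets. Then pick $g_n\in G$ with $\dinf{S_n}(f_n,g_n)\le \hauspdinf{S_n}(F,G)+1/n$.

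Now fix $x\in S$. By condition (1) of weak Hausdorff convergence there are $x_n\in S_n$ with $x_n\to x$. Then
\begin{equation*}
|f(x)-g_n(x)|\le |f(x)-f_n(x)| + |f_n(x_n)-g_n(x_n)| + 2d_X(x,x_n).
\end{equation*}
Here $|f_n(x)-f_n(x_n)|\le d_X(x,x_n)$ and likewise for $g_n$, by the $1$-Lipschitz property. As $n\to\infty$, this gives $\limsup_n|f(x)-g_n(x)|\le \ell$ pointwise on $S$.

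Taking a base point $x_0\in S$, the values $g_n(x_0)$ are bounded. Lemma~\ref{lem:bddIsCompact}, or directly Arzelà--Ascoli for pointwise convergence on the separable space $X$ via a diagonal argument, gives a subsequence $g_n\to g$ pointwise. Then $g\in\overline{G}$ and $|f(x)-g(x)|\le\ell$ for every $x\in S$. Setting $u(f)\coloneqq g$ gives $\dinf{S}(f,u(f))\le\ell$.

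The main obstacle is the choice of the approximating sequence $f_n$. Pointwise convergence $f_n\to f$ is only needed at the fixed point $x$, so it suffices. However, the closure in the paper is with respect to pointwise convergence, which is not first countable in general. Two remedies are available. On the one hand, the closure in measure is sequential via the Ky Fan metric by Proposition~\ref{prop:pointwiseEqKf} when $X$ is an mm-space. On the other hand, one can use a net argument: choose a net $f_\alpha\to f$ and replace the sequence by nets throughout, with compactness of $\bdd$ supplying the cluster point $g$. I would first try the net version, since the estimate above is pointwise in $x$ and works verbatim for nets. I would also double-check that the subsequence extraction commutes with the $\liminf$, which is handled by the initial passage to a subsequence realizing $\ell$.
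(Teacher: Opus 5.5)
Your proposal is correct and follows the paper's proof essentially step for step. You pass to a subsequence realizing the $\liminf$, approximate $f\in\overline{F}$ by $f_n\in F$, pick near-optimal $g_n\in G$ on $S_n$, use points of $S_n$ converging to $x\in S$ together with the $1$-Lipschitz property to get $\limsup_n|f(x)-g_n(x)|\le\ell$ on $S$, and extract a pointwise limit $g\in\overline{G}$ via Lemma~\ref{lem:bddIsCompact}.

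Your additional caveats are legitimate refinements that the paper's proof glosses over: the degenerate cases $\ell=+\infty$ and $G=\emptyset$, and the fact that the sequential argument and Lemma~\ref{lem:bddIsCompact} tacitly need $X$ separable. Separability does hold in the application to $X\times Y$ in Theorem~\ref{thm:BoxIsMin}, and nets with Tychonoff compactness are the fix otherwise.
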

\begin{proof}
    It is sufficient to show that there exists a $g\in\overline{G}$ such that\begin{equation*}
        \dinf{S}(f,g) \leq \liminf_{n\to\infty} \hauspdinf{S_n}(F,G)
    \end{equation*} for any $f\in\overline{F}$. If $S$ is empty, then $\dinf{S} \equiv 0$ proves this lemma for any $g\in\overline{G}$. Hence, we assume that $S$ is not empty. There exists a subsequence $\left\{S_{l(n)}\right\}_{n=1}^\infty$ such that\begin{equation*}
        \lim_{n\to\infty} \hauspdinf{S_{l(n)}}(F,G) = \liminf_{n\to\infty} \hauspdinf{S_n}(F,G) \eqqcolon \delta.
    \end{equation*} Take any $f\in\overline{F}$. For each $n=1,2,\ldots$, there exist maps $f_n\in F$ and $g_n\in G$ such that $f_n$ converges pointwise to $f$ as $n\to\infty$ and \begin{equation*}
        \dinf{S_{l(n)}}(f_n,g_n) < \hauspdinf{S_{l(n)}}(F,G) + \frac{1}{n}.
    \end{equation*} Take any $x\in S$. For each $n=1,2,\ldots$, there exists an $x_n\in S_n$ such that $x_n\to x$ as $n\to\infty$. Thus, we have \begin{align*}
        |g_n(x)-f(x)| & \leq
        |g_n(x) - g_n(x_{l(n)})| + |g_n(x_{l(n)}) - f_n(x_{l(n)})|                                  \\
                      & \quad   + |f_n(x_{l(n)})-f_n(x)| + |f_n(x)-f(x)|
        \\
                      & \leq 2d_X(x,x_{l(n)}) + \dinf{S_{l(n)}}(f_n,g_n) + |f_n(x)-f(x)| \to \delta
    \end{align*} as $n\to\infty$.

    Since $S$ is non-empty and $x$ is arbitrary, there exist an $s\in S$ and a real number $L>0$ such that $g_n\in\bdd(X,s,L)$ for any $n=1,2,\ldots$~. By Lemma~\ref{lem:bddIsCompact}, there exists a map $g\in \overline{G}$ and a subsequence $\left\{g_{l\circ m(n)}\right\}_{n=1}^\infty$ such that $g_{l\circ m(n)}$ converges pointwise to $g$ as $n\to\infty$. Since\begin{equation*}
        |f(x)-g(x)| = \lim_{n\to\infty} |f(x) - g_{l\circ m(n)}(x)| \leq \delta
    \end{equation*} and $x$ is arbitrary, we have $\dinf{S}(f,g) \leq \delta$. This completes the proof.
\end{proof}

\begin{theorem}\label{thm:BoxIsMin}
    For geometric data sets $X,Y$, we have\begin{equation*}
        \Box(X,Y) = \min\left\{\Box^S_\pi(\overline{F_X},\overline{F_Y})\mmid\pi\in\Pi(\mu_X,\mu_Y),S\in\F(X\times Y)\right\}.
    \end{equation*}
\end{theorem}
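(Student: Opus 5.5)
We prove the two inequalities between $\Box(X,Y)$ and the infimum on the right-hand side, and then show that this infimum is attained by a limiting argument.

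\emph{The infimum equals $\Box(X,Y)$.} For any closed $S$ we have $\Box^S_\pi(F_X,F_Y)\le\Box^S_\pi(\overline{F_X},\overline{F_Y})+$(a correction vanishing on compact sets). By the lemma reducing $\Box_\pi$ to compact $K$, together with the lemma $\hauspdinf{K}(F,\overline F)=0$ for compact $K$, we get $\Box^K_\pi(F_X,F_Y)=\Box^K_\pi(\overline{F_X},\overline{F_Y})$ for every compact $K$. The same compact-reduction argument applies verbatim to $\overline{F_X},\overline{F_Y}$. Hence $\Box_\pi(F_X,F_Y)=\Box_\pi(\overline{F_X},\overline{F_Y})$, so the infimum over $(\pi,S)$ on the right-hand side equals $\Box(X,Y)$.

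\emph{Attainment.} By Proposition~\ref{prop:BoxIsMinForCoupling}, choose $\pi$ with $\Box_\pi(F_X,F_Y)=\Box(X,Y)$. Then choose closed sets $S_n$ with $\Box^{S_n}_\pi(F_X,F_Y)<\Box(X,Y)+1/n$. By Theorem~\ref{thm:weakHausdorffSubSequence}, since $X\times Y$ is complete separable, we may pass to a subsequence converging in the weak Hausdorff sense to a closed set $S$.
\begin{enumerate}
\item Proposition~\ref{thm:weakHausdorffMeasureBound} gives $\pi(S)\ge\limsup\pi(S_n)$, so $1-\pi(S)\le\Box(X,Y)$.
\item For the Hausdorff term, apply Lemma~\ref{lem:MinBoxMap} on the metric space $X\times Y$ with $F=F_X\circ\pr_1$ and $G=F_Y\circ\pr_2$, both contained in $\lip1(X\times Y)$ for the $l^\infty$ metric. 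This yields a map $u\colon\overline{F_X\circ\pr_1}\to\overline{F_Y\circ\pr_2}$ with $\dinf{S}(f,u(f))\le\liminf_n\hauspdinf{S_n}(F,G)\le\Box(X,Y)/2$.
\item Applying the lemma symmetrically with the roles of $F$ and $G$ swapped gives the reverse direction.
\item Therefore $2\hauspdinf{S}(\overline{F_X}\circ\pr_1,\overline{F_Y}\circ\pr_2)\le\Box(X,Y)$, and $\Box^S_\pi(\overline{F_X},\overline{F_Y})\le\Box(X,Y)$.
\end{enumerate}
Combined with the equality of the infimum with $\Box(X,Y)$, this shows the minimum is attained.

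\emph{Main obstacle.} The delicate point is matching closures. One must check that $\overline{F_X\circ\pr_1}$, the pointwise closure in $\lip1(X\times Y)$, equals $\overline{F_X}\circ\pr_1$, and similarly for $Y$. This holds because pointwise convergence of $f_n\circ\pr_1$ is exactly pointwise convergence of $f_n$ on $X$, and any pointwise limit of functions of the form $f\circ\pr_1$ is again of that form.

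A second subtlety is that Lemma~\ref{lem:MinBoxMap} is stated for all $f$ in the closure. We therefore apply it with $F$ already closed, i.e.\ starting from $\Box^{S_n}_\pi(\overline{F_X},\overline{F_Y})$. This is legitimate because near-optimal $S_n$ can be taken compact, where the closure does not change the Hausdorff distance.
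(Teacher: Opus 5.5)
Your proposal is correct and follows the paper's proof. The steps match: take an optimal coupling from Proposition~\ref{prop:BoxIsMinForCoupling}, pass to a weak Hausdorff limit $S$ of near-optimal closed sets $S_n$, and then use Proposition~\ref{thm:weakHausdorffMeasureBound} together with Lemma~\ref{lem:MinBoxMap} (in both directions) to bound $\Box^S_\pi(\overline{F_X},\overline{F_Y})$ by $\Box(X,Y)$.

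Your first part makes explicit the reverse inequality that the paper leaves implicit, namely $\Box_\pi(F_X,F_Y)=\Box_\pi(\overline{F_X},\overline{F_Y})$ via reduction to compact sets. Your identification $\overline{F_X\circ\pr_1}=\overline{F_X}\circ\pr_1$ likewise spells out closure bookkeeping the paper skips. Your ``second subtlety'' is unnecessary: Lemma~\ref{lem:MinBoxMap} already takes the unclosed families in its hypothesis and returns elements of the closures.
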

\begin{proof}
    From Proposition~\ref{prop:BoxIsMinForCoupling}, there exists a coupling $\pi\in\Pi(\mu_X,\mu_Y)$ such that $\Box(X,Y) = \Box_\pi(F_X,F_Y)$. For each $n=1,2,\ldots$, there exists $S_n\in\F(X\times Y)$ such that\begin{equation*}
        \max\left\{1-\pi(S_n), 2\hauspdinf{S_n}(F_X\circ\pr_1,F_Y\circ\pr_2)\right\} \leq \Box_\pi(F_X,F_Y) + \frac{1}{n}.
    \end{equation*} Since $X\times Y$ is separable and complete, from Theorem~\ref{thm:weakHausdorffSubSequence}, there exist a subsequence $\left\{S_{m(n)}\right\}_{n=1}^\infty$ and a closed set $S\in\F(X\times Y)$ such that $S_{m(n)}\to S$ in the weak Hausdorff sense as $n\to\infty$. By Proposition~\ref{thm:weakHausdorffMeasureBound} and Lemma~\ref{lem:MinBoxMap}, \begin{align*}
        \Box^S_\pi(\overline{F_X},\overline{F_Y})
         & = \max\left\{1-\pi(S), 2\hauspdinf{S}(\overline{F_X}\circ\pr_1,\overline{F_Y}\circ\pr_2)\right\}                                     \\
         & \leq \max\left\{1-\limsup_{n\to\infty} \pi(S_{m(n)}), 2\liminf_{n\to\infty}\hauspdinf{S_{m(n)}}(F_X\circ\pr_1,F_Y\circ\pr_2)\right\} \\
         & = \liminf_{n\to\infty} \Box^{S_{m(n)}}_\pi(F_X,F_Y) \leq \Box(X,Y).
    \end{align*}
    This completes the proof.
\end{proof}

\subsection{Completeness}

Let $\{A_i\}_{i\in I}$ be a family of sets and $I_0\subset I$ a subset. We define\begin{equation*}
    \pr^I_{I_0} \colon \prod_{i\in I} A_i \to \prod_{i\in I_0} A_i \text{ by } \pr^I_{I_0}(\{x_i\}_{i\in I}) \coloneqq \{x_i\}_{i\in I_0}.
\end{equation*}

\begin{lemma}[Kolmogorov consistency theorem, {\cite[Theorem 10.6.2]{cohn2013measure}}] \label{lem:kolmogorovConsistency} Let $\{X_i\}_{i\in I}$ be a family of separable complete spaces indexed by non-empty set $I$, $\mathcal{I}$ the total set of all non-empty finite subsets of $I$. Suppose that, for each $I_0\in\mathcal{I}$, there exists a measure $\mu_{I_0}$ satisfying the following conditions \textup{(1)} and \textup{(2)}.\begin{enumerate}
        \item $\mu_{I_0}$ is a Borel probability measure on $\prod_{i\in I_0} X_i$.
        \item For any $I_1,I_2\in\mathcal{I}$, $\mu_{I_2} = (\pr^{I_1}_{I_2})_*\mu_{I_1}$.
    \end{enumerate}
    Then, there exists a Borel probability measure
    \begin{equation*}
        \mu \text{ on } \prod_{i\in I} X_i \text{ such that } \mu_{I_0} = (\pr^I_{I_0})_*\mu \text{ for any } I_0\in\mathcal{I}.
    \end{equation*}
\end{lemma}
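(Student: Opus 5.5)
The plan is the classical route: build a finitely additive set function on cylinder sets and upgrade it to a countably additive measure using tightness of Borel probability measures on complete separable spaces. Condition~(2) is to be read for $I_2\subset I_1$. Let $\mathcal{C}$ be the algebra of cylinder sets
\begin{equation*}
    C=(\pr^I_{I_0})^{-1}(B),\quad I_0\in\mathcal{I},\ B \text{ Borel in } \prod_{i\in I_0}X_i,
\end{equation*}
and set $\mu(C)\coloneqq\mu_{I_0}(B)$.

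First, $\mu$ is well defined. If $C$ has two representations over $I_0$ and $I_0'$, pass to $I_0\cup I_0'\in\mathcal{I}$ and apply (2) twice. The same common-refinement trick shows that $\mathcal{C}$ is an algebra and that $\mu$ is finitely additive with $\mu(\prod_i X_i)=1$.

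The main step is countable additivity on $\mathcal{C}$. Equivalently, if $C_1\supset C_2\supset\cdots$ in $\mathcal{C}$ with $\mu(C_n)\ge\epsilon>0$, then $\bigcap_n C_n\neq\emptyset$. Write $C_n=(\pr^I_{J_n})^{-1}(B_n)$ with $J_1\subset J_2\subset\cdots$. Each finite product $\prod_{i\in J_n}X_i$ is complete separable, so $\mu_{J_n}$ is inner regular (tight). Choose compact $K_n\subset B_n$ with $\mu_{J_n}(B_n\setminus K_n)<\epsilon 2^{-n-1}$, and put $D_n\coloneqq\bigcap_{k\le n}(\pr^I_{J_k})^{-1}(K_k)$. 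Finite additivity gives $\mu(D_n)\ge\epsilon/2$, so each $D_n$ is nonempty. Pick $x^{(n)}\in D_n$. For each fixed $k$, the points $\pr^I_{J_k}(x^{(n)})$ with $n\ge k$ lie in the compact set $K_k$. A diagonal argument then yields a subsequence along which $\pr^I_{J_k}(x^{(n)})$ converges for every $k$, with limit $y_k\in K_k$. These limits are compatible. Define $x\in\prod_i X_i$ by taking these limit coordinates on $\bigcup_k J_k$ and arbitrary coordinates elsewhere. Then $x\in\bigcap_n C_n$. I expect this compactness/diagonal step to be the main obstacle, and it is exactly where completeness and separability are used.

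By Carathéodory's extension theorem, $\mu$ extends uniquely to a probability measure on $\sigma(\mathcal{C})$, the product $\sigma$-algebra, and $(\pr^I_{I_0})_*\mu=\mu_{I_0}$ holds by construction. It remains to identify this $\sigma$-algebra with the Borel one. When $I$ is countable, which is the case needed later, the product is again complete separable (metrizable), and its Borel $\sigma$-algebra equals the product $\sigma$-algebra. For uncountable $I$ the conclusion should be read for the product $\sigma$-algebra, as in \cite[Theorem 10.6.2]{cohn2013measure}.
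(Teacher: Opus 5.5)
Your proof is correct. It is the standard cylinder-algebra, tightness and Carath\'eodory argument behind Cohn's Theorem 10.6.2, which the paper only cites and does not prove; note that completeness and separability are actually used in the tightness step that produces the $K_n$, and the diagonal step only needs compactness. Your caveat is also right: for uncountable $I$ this gives a measure on the product $\sigma$-algebra rather than the Borel one, but in the paper's only application, $I=\{1,2,\ldots\}$, the two coincide.
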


\begin{lemma}\label{lem:couplingChainExistence}
    Let $X_n$ be a geometric data set and $\pi_n\in\Pi(\mu_{X_n},\mu_{X_{n+1}})$ a coupling, $n=1,2,\ldots$~. There exists a Borel probability measure $\rho_m$ on $\prod_{n=1}^m X_n$ satisfying the following conditions \textup{(1)} and \textup{(2)}.\begin{enumerate}
        \item $(\pr_{\leq m})_*\rho_n = \rho_m$ for any natural numbers $m$ and $n$ with $1\leq m < n$.
        \item $(\pr_{n,n+1})_*\rho_n = \pi_n$ for any $n = 1,2,\ldots$~.
    \end{enumerate}
\end{lemma}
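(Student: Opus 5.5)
We construct the measures $\rho_m$ inductively by repeated application of the gluing lemma, which is already used earlier in the paper (in the proof of the triangle inequality for $\Box$ and of Theorem~\ref{theorem:DconcPreservesDomination}). We put $\rho_1 \coloneqq \mu_{X_1}$ and $\rho_2 \coloneqq \pi_1$. Then $(\pr_{\leq 1})_*\rho_2 = (\pr_1)_*\pi_1 = \mu_{X_1} = \rho_1$ and $(\pr_{1,2})_*\rho_2 = \pi_1$.

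For the inductive step, suppose $\rho_m$ is a Borel probability measure on $\prod_{n=1}^m X_n$ with $(\pr_m)_*\rho_m = \mu_{X_m}$. This marginal condition is part of the induction hypothesis and follows from $(\pr_{m-1,m})_*\rho_m = \pi_{m-1}$. Regard $\prod_{n=1}^m X_n$ as a single complete separable metric space $Z_m$ with the $l^\infty$-product metric. Then $\rho_m$ is a Borel probability measure on $Z_m$, and the map $\pr_m\colon Z_m\to X_m$ pushes it forward to $\mu_{X_m}$. We also have $\pi_m$ on $X_m\times X_{m+1}$ with first marginal $\mu_{X_m}$.

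Apply the gluing lemma in its general form, with two measures sharing a common marginal through measurable maps. Equivalently, disintegrate $\rho_m$ and $\pi_m$ with respect to $\mu_{X_m}$; this is possible since all spaces are Polish. Define
\begin{equation*}
    \rho_{m+1}(A\times B) \coloneqq \int_{X_m} \rho_m^{x}(A)\,\pi_m^{x}(B)\,d\mu_{X_m}(x).
\end{equation*}
Here $A\subset Z_m$ and $B\subset X_{m+1}$ are Borel, and $\rho_m^x$, $\pi_m^x$ are the conditional measures given the $m$-th coordinate equal to $x$. We then check two identities by evaluating on rectangles: $(\pr_{\leq m})_*\rho_{m+1} = \rho_m$ (take $B = X_{m+1}$) and $(\pr_{m,m+1})_*\rho_{m+1} = \pi_m$. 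The second uses that $\rho_m^x$ is concentrated on $\{\pr_m = x\}$. It also gives $(\pr_{m+1})_*\rho_{m+1} = \mu_{X_{m+1}}$, which continues the induction.

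Condition~(2) holds by construction. For condition~(1), when $m<n$ we have $\pr_{\leq m} = \pr_{\leq m}\circ\pr_{\leq n-1}\circ\cdots$ as compositions of one-step truncations. Iterating the one-step identity $(\pr_{\leq k})_*\rho_{k+1} = \rho_k$ then gives $(\pr_{\leq m})_*\rho_n = \rho_m$.

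The only genuine obstacle is justifying the gluing step when the shared coordinate is the last coordinate of a finite product rather than a single factor. This is handled either by the disintegration theorem on Polish spaces or by applying the ordinary three-space gluing lemma to $Z_m$, $X_m$, $X_{m+1}$. In the latter route, first push $\rho_m$ to the coupling $(\id,\pr_m)_*\rho_m$ on $Z_m\times X_m$, glue it with $\pi_m$, and then project away the duplicated $X_m$ factor. The projection works because the glued measure is concentrated on the closed set where the copy of $x_m$ inside $Z_m$ equals the middle coordinate.
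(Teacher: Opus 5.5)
Your proof is correct and follows the paper's argument. Both set $\rho_1=\mu_{X_1}$ and $\rho_2=\pi_1$, obtain $\rho_{n+1}$ by gluing $\rho_n$ with $\pi_n$ along the $n$-th coordinate, and get (1) by composing one-step truncations; you rightly read (2) as $(\pr_{n,n+1})_*\rho_{n+1}=\pi_n$, and your disintegration (or duplicate-the-coordinate) argument makes explicit the gluing step the paper simply cites as ``the gluing lemma.''
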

\begin{proof}
    Let us construct the measures $\rho_n$ inductively. For the first step, we put $\rho_1 \coloneqq \mu_{X_1}$ and $\rho_2 \coloneqq \pi_1$. It is trivial that these satisfy the conditions (1) and (2).

    For the inductive step, take $n\geq 2$ and assume that the measure $\rho_n$ satisfy the conditions (1) and (2). By the gluing lemma, there exists a Borel probability measure $\rho_{n+1}$ on $X_1\times\cdots\times X_{n+1}$ such that
    \begin{equation*}
        (\pr_{\leq n})_*\rho_{n+1} = \rho_n \text{ and } (\pr_{n,n+1})_*\rho_{n+1} = \pi_n.
    \end{equation*}
    Since $(\pr_{\leq m})_*\rho_{n+1} = (\pr_{\leq m})_*\rho_n = \rho_m$ for any natural number $m$ with $1 \leq m < n$, (1) holds. This completes the proof.
\end{proof}

\begin{lemma}
    For a sequence $\{X_n\}_{n=1}^\infty$ of geometric data sets and a sequence \begin{equation*}
        \{\pi_n\}_{n=1}^\infty \in \prod_{n=1}^\infty \Pi(\mu_{X_n},\mu_{X_{n+1}})
    \end{equation*} of couplings, there exists a Borel probability measure
    \begin{equation*}
        \mu \text{ on } \prod_{n=1}^\infty X_n \text{ such that } \pi_n = (\pr_{n,n+1})_*\mu \text{ for each } n=1,2,\ldots\text{.}
    \end{equation*}
\end{lemma}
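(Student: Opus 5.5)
The plan is to combine Lemma~\ref{lem:couplingChainExistence} with the Kolmogorov consistency theorem (Lemma~\ref{lem:kolmogorovConsistency}).

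\textbf{Finite-dimensional measures.} First, apply Lemma~\ref{lem:couplingChainExistence} to the given sequence $\{\pi_n\}$. This produces Borel probability measures $\rho_m$ on $\prod_{n=1}^m X_n$ that are compatible under $\pr_{\leq m}$ and satisfy $(\pr_{n,n+1})_*\rho_n = \pi_n$. Each $X_n$ is complete and separable with respect to $d_{X_n}$, so Lemma~\ref{lem:kolmogorovConsistency} applies with $I=\N$.

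For a non-empty finite set $I_0\subset\N$, let $m\coloneqq\max I_0$ and define
\begin{equation*}
    \mu_{I_0}\coloneqq(\pr^{\{1,\ldots,m\}}_{I_0})_*\rho_m .
\end{equation*}

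\textbf{Consistency.} Take finite sets $I_2\subset I_1$, with $m_i\coloneqq\max I_i$, so $m_2\leq m_1$. Projections compose, giving $\pr^{\{1,\ldots,m_1\}}_{I_2} = \pr^{\{1,\ldots,m_2\}}_{I_2}\circ\pr_{\leq m_2}$. Condition (1) of Lemma~\ref{lem:couplingChainExistence} gives $(\pr_{\leq m_2})_*\rho_{m_1}=\rho_{m_2}$. Together these yield $(\pr^{I_1}_{I_2})_*\mu_{I_1}=\mu_{I_2}$.

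Condition (2) of Lemma~\ref{lem:kolmogorovConsistency} is literally stated for arbitrary $I_1,I_2$. It must be read as applying only when $I_2\subset I_1$, since otherwise the projection $\pr^{I_1}_{I_2}$ does not make sense. I will state this reading explicitly.

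\textbf{Conclusion.} The consistency theorem then gives a Borel probability measure $\mu$ on $\prod_{n=1}^\infty X_n$ with $(\pr^{\N}_{I_0})_*\mu=\mu_{I_0}$ for every finite $I_0$. Take $I_0=\{n,n+1\}$. Then $\max I_0=n+1$, so
\begin{equation*}
    (\pr_{n,n+1})_*\mu=(\pr_{n,n+1})_*\rho_{n+1}=(\pr_{n,n+1})_*\rho_n=\pi_n ,
\end{equation*}
where the middle equality uses compatibility between $\rho_{n+1}$ and $\rho_n$.

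\textbf{Main obstacle.} The main difficulty is bookkeeping in Lemma~\ref{lem:couplingChainExistence} rather than any new estimate. In particular, one must check that condition (2) persists for $\rho_{n+1}$, i.e.\ $(\pr_{n,n+1})_*\rho_{n+1}=\pi_n$, which is exactly what the gluing step provides.

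A secondary point is measurability. Borel sets of the countable product coincide with the product $\sigma$-algebra because each factor is separable. This ensures the Kolmogorov extension is a Borel measure on the product, and that the product is Polish, so the cited theorem applies.
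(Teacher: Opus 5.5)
Your proposal is correct and follows the paper's proof essentially step for step: you take the chain measures $\rho_m$ from Lemma~\ref{lem:couplingChainExistence}, set $\mu_{I_0}\coloneqq(\pr^{\{1,\ldots,\max I_0\}}_{I_0})_*\rho_{\max I_0}$, check consistency, apply Kolmogorov, and read off $I_0=\{n,n+1\}$; you also orient the inclusion as $I_2\subset I_1$, which is the correct direction, whereas the paper's write-up has it reversed. One small fix: delete the middle term $(\pr_{n,n+1})_*\rho_n$ in your final chain, because $\rho_n$ lives on $X_1\times\cdots\times X_n$ and compatibility with $\rho_n$ says nothing about the $(n+1)$-st coordinate; the equality $(\pr_{n,n+1})_*\rho_{n+1}=\pi_n$ is condition (2) of that lemma correctly indexed (its statement has the typo $\rho_n$ for $\rho_{n+1}$), as you yourself note under ``Main obstacle''.
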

\begin{proof}
    From Lemma~\ref{lem:couplingChainExistence}, there exists a sequence $\{\rho_n\}_{n=1}^\infty$ of Borel probability measures satisfying conditions (1) and (2). Setting $I\coloneqq\{1,2,3,\ldots\}$ and $\mathcal{I}$ to be the total set of finite subsets of $I$, we define \begin{equation*}
        \mu_{I_0} \coloneqq (\pr^{\{1,2,\ldots,\max I_0\}}_{I_0})_*\rho_{\max I_0}
    \end{equation*} for each $I_0\in\mathcal{I}$. Then, for any $I_1,I_2\in\mathcal{I}$ with $I_1\subset I_2$, we have\begin{align*}
        \mu_{I_2} & = (\pr^{\{1,2,\ldots,\max I_2\}}_{I_2})_*\rho_{\max I_2}                                                            \\
                  & = (\pr^{\{1,2,\ldots,\max I_2\}}_{I_2})_*(\pr^{\{1,2,\ldots,\max I_1\}}_{\{1,2,\ldots,\max I_2\}})_*\rho_{\max I_1} \\
                  & = (\pr^{\{1,2,\ldots,\max I_1\}}_{I_2})_*\rho_{\max I_1}                                                            \\
                  & = (\pr^{I_1}_{I_2})_*(\pr^{\{1,2,\ldots,\max I_1\}}_{I_1})_*\mu_{I_1} = (\pr^{I_1}_{I_2})_*\mu_{I_1}.
    \end{align*} By Lemma~\ref{lem:kolmogorovConsistency}, there exists a Borel probability measure $\mu$ on $\prod_{n=1}^\infty X_n$ such that $\mu_{I_0} = (\pr^I_{I_0})_*\mu$. In particular, we have\begin{align*}
        \pi_n & = (\pr_{n,n+1})_*\rho_{n+1} = (\pr_{n,n+1})_*\mu.
    \end{align*} This completes the proof.
\end{proof}

\begin{lemma}\label{lem:GreaterBox}
    Let $X_n$ be a geometric data set, $n=1,2,\ldots$, and assume \begin{equation*}
        \sum_{n=1}^\infty \Box(X_n,X_{n+1}) < +\infty.
    \end{equation*} Then, there exist a geometric data set $X_\infty$ and a domination $\phi_n\colon X_\infty \to X_n$ such that\begin{equation*}
        \hausp{\kf^{X_\infty}}(F_{X_n}\circ\phi_n,F_{X_{n+1}}\circ\phi_{n+1}) \leq \Box(X_n,X_{n+1})
    \end{equation*} for $n=1,2,\ldots$~.
\end{lemma}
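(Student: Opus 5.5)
The plan is to realize $X_\infty$ as a subset of the product $\prod_{n=1}^\infty X_n$, carrying the glued coupling measure and the union of all pulled-back feature families. The dominations $\phi_n$ will be the coordinate projections $\pr_n$.

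\emph{Step 1: optimal couplings and sets.} Put $\delta_n \coloneqq \Box(X_n,X_{n+1})$. By Theorem~\ref{thm:BoxIsMin} there are $\pi_n\in\Pi(\mu_{X_n},\mu_{X_{n+1}})$ and $S_n\in\F(X_n\times X_{n+1})$ with
\begin{equation*}
    1-\pi_n(S_n)\leq\delta_n \quad\text{and}\quad 2\hauspdinf{S_n}(\overline{F_{X_n}}\circ\pr_1,\overline{F_{X_{n+1}}}\circ\pr_2)\leq\delta_n .
\end{equation*}
By the lemma preceding the statement (obtained from Lemma~\ref{lem:couplingChainExistence} and Lemma~\ref{lem:kolmogorovConsistency}), there is a Borel probability measure $\mu$ on $\prod_n X_n$ with $(\pr_{n,n+1})_*\mu=\pi_n$ for every $n$. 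In particular $(\pr_n)_*\mu=\mu_{X_n}$.

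\emph{Step 2: the carrier set and its metric.} For each $N$ let
\begin{equation*}
    T_N\coloneqq\{x\in\textstyle\prod_n X_n \mid (x_n,x_{n+1})\in S_n \text{ for all } n\geq N\}.
\end{equation*}
Each $T_N$ is closed in the product topology, and $T_N\subset T_{N+1}$. Since $\mu(\pr_{n,n+1}^{-1}(S_n)^c)\leq\delta_n$ and $\sum_n\delta_n<\infty$, Borel--Cantelli gives $\mu(\bigcup_N T_N)=1$.

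Equip the product with the features $F\coloneqq\bigcup_n\overline{F_{X_n}}\circ\pr_n$. Then $d_F(x,y)=\sup_n d_{X_n}(x_n,y_n)$ is the $l^\infty$-metric. On $S_n$ the proof of Proposition~\ref{prop:BoxDistanceLessThanDis} gives $\dis S_n\leq\delta_n$. Hence for $x,y\in T_N$ and $M\geq N$,
\begin{equation*}
    \sup_{n\geq M} d_{X_n}(x_n,y_n)\leq d_{X_M}(x_M,y_M)+\sum_{k\geq M}\delta_k .
\end{equation*}
Consequences:
\begin{enumerate}
    \item $F$ is tame on $T_N$.
    \item Since the tail $\sum_{k\geq M}\delta_k$ is small, the $d_F$-topology on $T_N$ coincides with the product topology.
    \item Therefore each $T_N$ is complete and separable: a $d_F$-Cauchy sequence converges coordinatewise, and the limit stays in the closed set $T_N$.
\end{enumerate}

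\emph{Step 3: defining $X_\infty$.} Let $X_\infty$ be the support of $\mu$ inside the $d_F$-closure of $\bigcup_N T_N$, with features $F|_{X_\infty}$ and measure $\mu$. One has to check that this is an mm-space: separability comes from the $\sigma$-compact (indeed separable) union of the $T_N$ carrying full measure, and completeness from taking the closure. Then $\phi_n\coloneqq\pr_n|_{X_\infty}$ satisfies $(\phi_n)_*\mu=\mu_{X_n}$ and $F_{X_n}\circ\phi_n\subset F$, so it is a domination.

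\emph{Step 4: the Hausdorff estimate.} Take $f\in F_{X_n}$. Choose $g\in\overline{F_{X_{n+1}}}$ with $\sup_{S_n}|f\circ\pr_1-g\circ\pr_2|\leq\delta_n/2$; this can be done up to an arbitrary $\epsilon$, and the $\epsilon$ is removed using the compactness argument of Lemma~\ref{lem:MinBoxMap}. Outside a set of $\mu$-measure at most $1-\pi_n(S_n)\leq\delta_n$ we then have $|f\circ\phi_n-g\circ\phi_{n+1}|\leq\delta_n/2$, so $\kf^{\mu}\leq\delta_n$. By symmetry the Hausdorff bound follows. Replacing $\overline{F_{X_{n+1}}}$ by $F_{X_{n+1}}$ costs nothing, because $\kf$-closure does not change the Hausdorff distance.

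\emph{Main obstacle.} The hardest part is Steps 2--3: showing that the $l^\infty$-product, which is non-separable in general, becomes Polish on the set that carries $\mu$. The tail-control estimate above, obtained from $\dis S_n\leq\delta_n$, is the key tool. The delicate point is the closure of the increasing union $\bigcup_N T_N$. If the closure causes trouble, I would instead use Lemma~\ref{thm:quotientDistance} to pass to a completion and push $\mu$ forward.
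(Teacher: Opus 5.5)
Your proposal is correct and follows essentially the paper's route. You glue the optimal couplings $\pi_n$ from Theorem~\ref{thm:BoxIsMin} into $\mu$ on $\prod_n X_n$ and restrict to sequences with $(x_n,x_{n+1})\in S_n$ eventually. You then use $\dis S_n\le\Box(X_n,X_{n+1})$ to make the $l^\infty$-metric finite, complete and separable there, take the projections as dominations, and bound the Ky Fan Hausdorff distance by $\max\{1-\pi_n(S_n),\hauspdinf{S_n}(\cdot,\cdot)\}$. The differences are cosmetic: you use $\bigcup_n\overline{F_{X_n}}\circ\pr_n$ instead of $\mathrm{Lip}_1(X_\infty)$, and passing to $\supp\mu$ at the end makes the mm-space condition $\supp\mu_{X_\infty}=X_\infty$ explicit, which the paper's $\supp\pi_n$ device leaves unchecked. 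Also, the $\epsilon$-removal in Step~4 is unnecessary, since the Hausdorff distance is already an inf/sup.
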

\begin{proof}
    From Theorem~\ref{thm:BoxIsMin}, there exist a coupling $\pi_n\in\Pi(\mu_{X_n},\mu_{X_{n+1}})$ and a subset $S_n\in\F(X_n\times X_{n+1})$ such that\begin{equation*}
        \max\{1-\pi_n(S_n), 2\hauspdinf{S_n}(\overline{F_{X_n}}\circ\pr_1,\overline{F_{X_{n+1}}}\circ\pr_2)\} = \Box(X_n,X_{n+1}).
    \end{equation*} By Lemma~\ref{lem:couplingChainExistence}, there exists a Borel probability measure $\mu$ on $\prod_{n=1}^\infty X_n$ such that $\pi_n = (\pr_{n,n+1})_*\mu$ for any $n=1,2,\ldots$~. We set \begin{equation*}
        X'_\infty \coloneqq \bigcup_{N=1}^\infty \left\{\{x_n\}_{n=1}^\infty\mmid \{(x_n,x_{n+1})\}_{n=1}^\infty\in\prod_{n=1}^{N-1} \supp\pi_n\times\prod_{n=N}^\infty S_n\right\}.
    \end{equation*} Let us construct the distance function on $X'_\infty$. We define\begin{equation*}
        d_{X'_\infty}\left(\{x_n\}_{n=1}^\infty,\{y_n\}_{n=1}^\infty\right) \coloneqq \sup_{n=1}^\infty\ d_{X_n}(x_n,y_n)\in[0,+\infty]
    \end{equation*} for each $\{x_n\}_{n=1}^\infty,\{y_n\}_{n=1}^\infty\in X'_\infty$. Since there exists a natural number $N$ such that\begin{equation*}
        (x_n,x_{n+1}),(y_n,y_{n+1}) \in S_n
    \end{equation*} for all $n=N,N+1,\ldots$, we have\begin{equation*}
        \left|d_{X_n}(x_n,y_n)-d_{X_{n+1}}(x_{n+1},y_{n+1})\right| \leq \dis S_n \leq \Box(X_n,X_{n+1}).
    \end{equation*} Thus, the sequence $\{d_{X_n}(x_n,y_n)\}_{n=1}^\infty$ is bounded and $d_{X'_\infty}$ defines a metrization of the relative topology of the product topology.

    Let us construct a separable complete metric space $X_\infty$. We define similarly a distance function $d'$ on $\prod_{n=1}^\infty X_n$ by\begin{equation*}
        d'\left(\{x_n\}_{n=1}^\infty,\{y_n\}_{n=1}^\infty\right) \coloneqq \sup_{n=1}^\infty \frac{d_{X_n}(x_n,y_n)}{n\cdot(1+d_{X_n}(x_n,y_n))}
    \end{equation*} for each $\{x_n\}_{n=1}^\infty,\{y_n\}_{n=1}^\infty\in\prod_{n=1}^\infty X_n$. Then, $d'$ is a separable, complete metrization of the product topology. The completion of $X'_\infty$ can be topologically embedded in the closure $\overline{X'_\infty}$ with respect to the product topology because $d_{X'_\infty}$ is greater than or equals to $d'$ on $X'_\infty$. We denote by $X_\infty\subset \overline{X'_\infty}$ the embedded completion of $X'_\infty$. The separability of $d'$ proves also the separability of $X_\infty$.

    Define a structure of geometric data set on $X_\infty$. The restriction of $\mu|_{X_\infty}$ is a Borel probability measure on $X_\infty$ because we have \begin{align*}
        \mu(X'_\infty) & \geq \sup_{N=1}^\infty \mu\left(\left\{\{x_n\}_{n=1}^\infty\mmid \{(x_n,x_{n+1})\}_{n=1}^\infty\in\prod_{n=1}^{N-1} \supp\pi_n\times\prod_{n=N}^\infty S_n\right\}\right) \\
                       & \geq \sup_{N=1}^\infty \mu\left(\bigcap_{n=N}^\infty X_1\times\cdots\times X_{n-1}\times S_n \times X_{n+2}\times\cdots\right)                                            \\
                       & \geq \sup_{N=1}^\infty \left(1-\sum_{n=N}^\infty\Box(X_n,X_{n+1})\right) = 1.
    \end{align*} We set $F_{X_\infty} \coloneqq \lip1(X_\infty)$, $\mu_{X_\infty} \coloneqq \mu|_{X_\infty}$, and $\phi_n \coloneqq \pr_n|_{X_\infty}$ for each $n=1,2,\ldots$~.

    By the definition of $d_{X'_\infty}$, we see that $\phi_n$ is 1-Lipschitz continuous. Take any $n=1,2,\ldots$. Since \begin{align*}
        (\phi_n)_*\mu_{X_\infty} & = (\pr_n)_*\mu = \mu_{X_n},
    \end{align*} we see that $\phi_n$ is domination.

    Consider the distace between $F_{X_n}\circ\phi_n$ and $F_{X_{n+1}}\circ\phi_{n+1}$. From the definition of $\mu_{X_\infty}$, we have
    \begin{align*}
        \hausp{\kf^{X_\infty}}(F_{X_n}\circ\phi_n,F_{X_{n+1}}\circ\phi_{n+1})
         & = \hausp{\kf^{\pi_n}}(F_{X_n}\circ\pr_1,F_{X_{n+1}}\circ\pr_2)                        \\
         & = \hausp{\kf^{\pi_n}}(\overline{F_{X_n}}\circ\pr_1,\overline{F_{X_{n+1}}}\circ\pr_2).
    \end{align*}
    By the definition of $\kf^{\pi_n}$, we obtain that
    \begin{align*}
             & \hausp{\kf^{\pi_n}}(\overline{F_{X_n}}\circ\pr_1,\overline{F_{X_{n+1}}}\circ\pr_2)                                        \\
        \leq & \max\{1-\pi_n(S_n),\hauspdinf{S_n}(\overline{F_{X_n}}\circ\pr_1,\overline{F_{X_{n+1}}}\circ\pr_2)\}                       \\
        \leq & \max\{1-\pi_n(S_n),2\hauspdinf{S_n}(\overline{F_{X_n}}\circ\pr_1,\overline{F_{X_{n+1}}}\circ\pr_2)\} = \Box(X_n,X_{n+1}).
    \end{align*}
    Thus,
    \begin{equation*}
        \hausp{\kf^{X_\infty}}(F_{X_n}\circ\phi_n,F_{X_{n+1}}\circ\phi_{n+1}) \leq \Box(X_n,X_{n+1}).
    \end{equation*}
    This completes the proof.
\end{proof}

\begin{lemma}\label{lem:dInftyUniformlyContinuous}
    Let $X$ be a geometric data set, $K$ a compact subset of $X$, and $Y$ a metric space. Then, $d^K_\infty$ is uniformly continuous with respect to $(\lip1(X,Y),\kf^X)$.
\end{lemma}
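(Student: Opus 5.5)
We must show: for every $\epsilon>0$ there is $\delta>0$ such that $\kf^X(f,g)<\delta$ implies $\dinf{K}(f,g)<\epsilon$, for all $f,g\in\lip1(X,Y)$. The plan combines the compactness of $K$ with the fact that $\supp\mu_X=X$ (since $X$ is an mm-space). This is the same mechanism as in the second half of the proof of Proposition~\ref{prop:pointwiseEqKf}, but made quantitative and uniform over $K$.

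\emph{Step 1: a uniform lower bound on ball masses.} Fix $\epsilon>0$ and set $r\coloneqq\epsilon/4$. By compactness, pick finitely many $x_1,\ldots,x_M\in K$ with $K\subset\bigcup_{m}U_X(x_m;r)$. Since $\supp\mu_X=X$, each ball has positive mass, so
\begin{equation*}
    c\coloneqq\min_{1\le m\le M}\mu_X\bigl(U_X(x_m;r)\bigr)>0.
\end{equation*}
Then every $x\in K$ satisfies $U_X(x_m;r)\subset U_X(x;2r)$ for a suitable $m$, and hence $\mu_X(U_X(x;2r))\ge c$. Put $\delta\coloneqq\min\{c,r\}$. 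This $\delta$ depends only on $K$, $\epsilon$ and $\mu_X$, not on $f,g$, which is what gives uniformity.

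\emph{Step 2: the contrapositive estimate.} Let $f,g\in\lip1(X,Y)$ and suppose $\dinf{K}(f,g)\ge\epsilon$. Then there is $x\in K$ with $d_Y(f(x),g(x))>\epsilon-\eta$ for arbitrarily small $\eta>0$; alternatively, take a maximizer using compactness and continuity. For $z\in U_X(x;2r)$, the $1$-Lipschitz property gives
\begin{equation*}
    d_Y(f(z),g(z))\ge d_Y(f(x),g(x))-2d_X(x,z)>\epsilon-\eta-4r=-\eta,
\end{equation*}
which is too weak, so the radii need adjusting. Take $r\coloneqq\epsilon/8$ instead, so that on $U_X(x;2r)$ we get $d_Y(f(z),g(z))>\epsilon/2-\eta\ge\epsilon/4$ for small $\eta$. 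Choose $\delta\coloneqq\min\{c,\epsilon/4\}$. Then
\begin{equation*}
    \mu_X\bigl(\{z\mid d_Y(f(z),g(z))>\delta'\}\bigr)\ge c>\delta'
\end{equation*}
for every $\delta'<\delta$. By the definition of $\kf^X$ this forces $\kf^X(f,g)\ge\delta$. Contrapositively, $\kf^X(f,g)<\delta$ implies $\dinf{K}(f,g)<\epsilon$.

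\emph{Step 3: uniform continuity.} Step 2 is a uniform equivalence statement. By the triangle inequality, $|\dinf{K}(f,h)-\dinf{K}(g,h)|\le\dinf{K}(f,g)$, so $\dinf{K}$ is uniformly continuous as a pseudometric, or as a function on pairs, with respect to $\kf^X$.

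The main obstacle is Step 1, namely getting a positive lower bound on ball masses that is uniform over $K$. This is exactly where full support and compactness of $K$ are used. Step 2 then only requires careful bookkeeping of the constants $\epsilon/8$ and $\epsilon/4$, which I expect to be routine.
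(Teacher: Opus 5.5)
Your proof is correct and follows essentially the paper's argument: cover $K$ by finitely many balls, use $\supp\mu_X=X$ to get a uniform positive lower bound on their masses, transfer along each ball with the $1$-Lipschitz bound, and use the triangle inequality to pass to $\dinf{K}$ as a function of pairs. The only difference is direction: you argue contrapositively (a gap of size $\epsilon$ at one point of $K$ persists on a ball of mass at least $c$, which forces $\kf^X(f,g)\ge\delta$), whereas the paper argues directly (each covering ball meets the set where $f$ and $g$ are $\delta$-close); in a write-up, fix $r=\epsilon/8$ already in Step~1 so that $c$ is computed for the radius you actually use, and dispose of $K=\emptyset$ as the trivial case.
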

\begin{proof}
    Take any real number $\epsilon > 0$. There exists finitely many points
    \begin{equation*}
        x_1,x_2,\ldots,x_N\in X \text{ such that } K\subset\bigcup_{n=1}^N U_X(x_n;\epsilon).
    \end{equation*}
    We set \begin{equation*}
        s \coloneqq \min\left\{\mu_X(U_X(x_n;\epsilon))\mid n=1,\ldots,N\right\},\ \delta \coloneqq \min\left\{\frac{s}{2},\epsilon\right\}.
    \end{equation*} We take any $f,g\in\lip1(X,Y)$ and $x\in K$ and assume that $\kf^X(f,g) < \delta$. There exists $n\in\{1,2,\ldots,N\}$ such that $d_X(x,x_n) < \epsilon$. Since
    \begin{equation*}
        \mu_X(\left\{x\in X\mmid d_Y(f(x),g(x)) < \delta\right\}) \geq 1-\delta
    \end{equation*}
    and
    \begin{equation*}
        \mu_X(U_X(x_n;\epsilon)) \geq s \geq 2\delta,
    \end{equation*}
    there exists $x'\in U_X(x_n;\epsilon)$ such that $d_Y(f(x'),g(x')) < \delta \leq \epsilon$. Thus, we have\begin{equation*}
        d_Y(f(x),g(x)) \leq d_Y(f(x'),g(x')) + 2d_X(x,x') < 3\epsilon.
    \end{equation*} The arbitrariness of $x$ proves that $d^K_\infty(f,g) < 3\epsilon$. We take any $f',g'\in\lip1(X,Y)$ that $\kf^X(f,g) < \delta$. We have $d^K_\infty(f,g) < 3\epsilon$ in the same way as above. From the triangle inequality, we have\begin{align*}
        d^K_\infty(f,f') & \leq d^K_\infty(f,g) + d^K_\infty(g,g') + d^K_\infty(g',f') \\
                         & \leq d^K_\infty(g,g') + 6\epsilon.
    \end{align*} The symmetry between $(f,f')$ and $(g,g')$ implies $d^K_\infty(g,g') \leq \dinf{S}(f,f') + 6\epsilon$. Thus, we have \begin{equation*}
        |d^K_\infty(f,f') - d^K_\infty(g,g')| \leq 6\epsilon.
    \end{equation*} The arbitrariness of $f,f',g,g'$ completes the proof.
\end{proof}

\begin{lemma}\label{lem:embeddedBox}
    Let $X,Y$ be two geometric data sets, $Z$ a feature space, $\nu$ a Borel probability measure on $Z$, and $\phi\colon Z\to X$, $\psi\colon Z\to Y$ two maps. We assume that\begin{equation*}
        F_X\circ\phi\subset F_Z,\ F_Y\circ\psi\subset F_Z,\ \phi_*\nu = \mu_X,\ \psi_*\nu = \mu_Y.
    \end{equation*} Then, we have\begin{equation*}
        \Box(X,Y) \leq \max\left\{ 1-\nu(S), 2\dinf{S}(F_X\circ\phi,F_Y\circ\psi)\right\}.
    \end{equation*}
\end{lemma}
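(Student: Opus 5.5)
The plan is to mimic the proof of Lemma~\ref{lem:dconcOfDominatedSpace}: push the measure $\nu$ forward to a coupling, push the set $S\subset Z$ forward to a closed set in $X\times Y$, and compare the two quantities term by term. Throughout, $S$ is read as a subset of $Z$ (a closed set, or at least a $\nu$-measurable one). Put $\pi\coloneqq(\phi,\psi)_*\nu$. Since $(\pr_1)_*\pi=\phi_*\nu=\mu_X$ and $(\pr_2)_*\pi=\psi_*\nu=\mu_Y$, we have $\pi\in\Pi(\mu_X,\mu_Y)$. Then take
\begin{equation*}
    T\coloneqq\overline{(\phi,\psi)(S)}\in\F(X\times Y),
\end{equation*}
where the closure is taken in the $l^\infty$-product metric. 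By definition, $\Box(X,Y)\leq\Box_\pi(F_X,F_Y)\leq\Box^T_\pi(F_X,F_Y)$, so it remains to bound the two terms of $\Box^T_\pi(F_X,F_Y)$.

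For the measure term, $(\phi,\psi)^{-1}(T)\supset S$, so $\pi(T)=\nu((\phi,\psi)^{-1}(T))\geq\nu(S)$. This step needs $\phi$ and $\psi$ to be measurable. That follows because each is $1$-Lipschitz: for every $f\in F_X$ we have $f\circ\phi\in F_Z$, so
\begin{equation*}
    d_X(\phi(z),\phi(z'))=\sup_{f\in F_X}|f\circ\phi(z)-f\circ\phi(z')|\leq d_Z(z,z').
\end{equation*}
The same argument applies to $\psi$, and hence $(\phi,\psi)$ is $1$-Lipschitz into the $l^\infty$-product.

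For the Hausdorff term, take $f\in F_X$ and $g\in F_Y$. Then
\begin{equation*}
    \dinf{(\phi,\psi)(S)}(f\circ\pr_1,g\circ\pr_2)=\sup_{z\in S}|f(\phi(z))-g(\psi(z))|=\dinf{S}(f\circ\phi,g\circ\psi).
\end{equation*}
Because $f\circ\pr_1-g\circ\pr_2$ is continuous on $X\times Y$, the supremum over $T$ equals the supremum over $(\phi,\psi)(S)$. Taking the Hausdorff distance over $f$ and $g$ then gives
\begin{equation*}
    \hauspdinf{T}(F_X\circ\pr_1,F_Y\circ\pr_2)=\hauspdinf{S}(F_X\circ\phi,F_Y\circ\psi),
\end{equation*}
which I take to be what the statement's $\dinf{S}(F_X\circ\phi,F_Y\circ\psi)$ means. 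Combining the two bounds yields
\begin{equation*}
    \Box(X,Y)\leq\Box^T_\pi(F_X,F_Y)\leq\max\{1-\nu(S),2\hauspdinf{S}(F_X\circ\phi,F_Y\circ\psi)\}.
\end{equation*}

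The only delicate point is measurability and closedness: $(\phi,\psi)(S)$ need not be closed, which is why the closure $T$ is used, and $\nu$ must be a Borel measure for the $d_Z$-topology. Both are handled by the Lipschitz observation in the measure step. If $S$ is only measurable rather than closed, the same argument goes through unchanged, because we only use $S\subset(\phi,\psi)^{-1}(T)$.
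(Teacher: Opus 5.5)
Your proof is correct and follows the paper's argument: you push $\nu$ forward to the coupling $(\phi,\psi)_*\nu$ and test $\Box$ against the closed set $\overline{(\phi,\psi)(S)}$. Your version is slightly cleaner than the paper's in two respects. First, you justify the Borel measurability of $(\phi,\psi)$ via the $1$-Lipschitz estimate, which the paper leaves implicit. Second, you evaluate the measure term on the closure rather than on the possibly non-Borel image $(\phi,\psi)(S)$, which is what the paper writes.
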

\begin{proof}
    Since the measure $(\phi,\psi)_*\nu$ is a coupling between $\mu_X$ and $\mu_Y$, we have\begin{align*}
        \Box(X,Y) & \leq \max\left\{ 1-(\phi,\psi)_*\nu((\phi,\psi)(S)), 2\dinf{\overline{(\phi,\psi)(S)}}(F_X\circ\pr_1,F_Y\circ\pr_2)\right\} \\
                  & \leq \max\left\{ 1-\nu(S), 2\dinf{S}(F_X\circ\phi,F_Y\circ\psi)\right\}.
    \end{align*} This completes the proof.
\end{proof}

\begin{theorem}
    $(\D,\Box)$ is complete.
\end{theorem}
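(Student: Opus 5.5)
The plan is the standard one for Cauchy sequences: pass to a fast subsequence, build a common dominating space with Lemma~\ref{lem:GreaterBox}, and take a limit of the pulled-back feature families. Let $\{X_n\}$ be $\Box$-Cauchy. Since a Cauchy sequence with a convergent subsequence converges, we may assume $\Box(X_n,X_{n+1})<2^{-n}$, so $\sum_n\Box(X_n,X_{n+1})<+\infty$.

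Lemma~\ref{lem:GreaterBox} then gives a geometric data set $X_\infty$ with $F_{X_\infty}=\lip1(X_\infty)$ and dominations $\phi_n\colon X_\infty\to X_n$. Put $G_n\coloneqq\overline{F_{X_n}\circ\phi_n}\subset\lip1(X_\infty)$. I will work in the hyperspace of subsets of $\lip1(X_\infty)$ that are closed for the topology of convergence in measure. The distance between $G_n$ and $G_{n+1}$ is controlled in two ways. In $\kf^{X_\infty}$ it is at most $\Box(X_n,X_{n+1})$ by the lemma. More strongly, the construction of $X_\infty$ gives a $\dinf{}$-type control: the sets $S_n$ give a set of large measure on which $\phi_n$-features and $\phi_{n+1}$-features are uniformly within $\Box(X_n,X_{n+1})/2$. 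Concretely, let $T_N\coloneqq\{x\in X_\infty\mid (x_n,x_{n+1})\in S_n\ \forall n\ge N\}$. Then $\mu_{X_\infty}(T_N)\ge 1-\sum_{n\ge N}\Box(X_n,X_{n+1})$, and for $m>n\ge N$,
\[
\hauspdinf{T_N}(F_{X_n}\circ\phi_n,F_{X_m}\circ\phi_m)\le\tfrac12\sum_{k=n}^{m-1}\Box(X_k,X_{k+1}).
\]
This follows by chaining the Hausdorff estimates on each $S_k$ and using Theorem~\ref{thm:BoxIsMin} to pass to closures.

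Next I define the limit feature family. Let $G$ be the set of all $g\in\lip1(X_\infty)$ that are pointwise limits, on $\bigcup_N T_N$ and hence, by density and Lipschitz continuity, everywhere, of sequences $g_n\in\overline{F_{X_n}}\circ\phi_n$ that are $\dinf{T_N}$-Cauchy for every $N$. Such a limit exists and is 1-Lipschitz because the sequence is uniformly Cauchy on each $T_N$ and $\mu(\bigcup_N T_N)=1$. Each $\overline{F_{X_n}}\circ\phi_n$ is within $\dinf{T_N}$-Hausdorff distance $\tfrac12\sum_{k\ge n}\Box(X_k,X_{k+1})$ of $G$, for $n\ge N$. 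One direction is the chaining above: starting from any $f\in F_{X_n}$ we select successive near-matches. The other direction is by definition. Set $X\coloneqq X_\infty/G$, the quotient of Proposition~\ref{thm:QuotientSpaceExist} in its extended sense, and let $q$ be the induced map, which is a domination on the $G$-side.

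Finally I estimate $\Box(X_n,X)$ using Lemma~\ref{lem:embeddedBox} with $Z=X_\infty$, $\nu=\mu_{X_\infty}$, $S=\overline{T_n}$, and the maps $\phi_n$ and $q$. This yields
\[
\Box(X_n,X)\le\max\Bigl\{\sum_{k\ge n}\Box(X_k,X_{k+1}),\ \sum_{k\ge n}\Box(X_k,X_{k+1})\Bigr\}\to0 .
\]
The step I expect to be the main obstacle is making the $\dinf{T_N}$ chaining rigorous. The Hausdorff bounds are given on $S_k\subset X_k\times X_{k+1}$ only for closures of features, and they must be transferred to $X_\infty$ through $(\phi_k,\phi_{k+1})$. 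One also has to check that $\overline{T_N}$ is closed in $X_\infty$ and that the quotient $X_\infty/G$ really has $F_X\circ q=G$ up to closure. For this, I would first use Lemma~\ref{lem:dInftyUniformlyContinuous} on compact subsets of $T_N$, via inner regularity, to replace $\kf$-closures by $\dinf{K}$-closures. I would then run the chaining only on compact $K\subset T_N$. This suffices by the lemma on compact sets $K$ characterizing $\Box_\pi$.
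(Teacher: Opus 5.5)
Your argument is correct in outline, but it takes a different route from the paper. The paper uses Lemma~\ref{lem:GreaterBox} only through its statement. The families $F_{X_n}\circ\phi_n$ form a Cauchy sequence for the $\kf^{X_\infty}$-Hausdorff distance, so completeness of $(\lip1(X_\infty),\kf^{X_\infty})$ and of its hyperspace of closed sets gives a limit $F_\infty$, and the paper sets $X\coloneqq X_\infty/F_\infty$. Lemma~\ref{lem:dInftyUniformlyContinuous} then turns $\kf$-Hausdorff closeness into $\dinf{K}$-Hausdorff closeness on a compact $K$ of large measure, and Lemma~\ref{lem:embeddedBox} finishes.

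You instead open up the construction of $X_\infty$. You use the uniform control on the sets $S_n$ through the sets $T_N$, and build the limit family directly from uniformly Cauchy chains. This avoids hyperspace completeness and uniform continuity altogether, and it yields the explicit rate $\Box(X_n,X)\le\sum_{k\ge n}\Box(X_k,X_{k+1})+\epsilon$, whereas the paper's argument is only qualitative. The price is that you depend on the internals of the proof of Lemma~\ref{lem:GreaterBox}, namely the $S_n$ and $\phi_n=\pr_n|_{X_\infty}$, rather than on its statement.

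A few points to repair or simplify.

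\textbf{The ``other direction'' is not by definition.} A sequence that is merely $\dinf{T_N}$-Cauchy carries no rate, so membership in $G$ says nothing about the distance from $g$ to $\overline{F_{X_n}}\circ\phi_n$. You need to chain backwards. Given $g=\lim g_m\circ\phi_m$, choose $m$ with $\dinf{T_n}(g_m\circ\phi_m,g)$ small. Then use the other half of the Hausdorff bounds on $S_{m-1},\dots,S_n$ to descend to some $f\in\overline{F_{X_n}}$. Alternatively, build the rate into the definition of $G$.

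\textbf{Closures on non-compact sets.} As you anticipate, your chained bound on the non-compact $T_N$ holds for $\overline{F_{X_n}}\circ\phi_n$, not for $F_{X_n}\circ\phi_n$. So the final estimate must use a compact $K\subset T_n$ with $\mu_{X_\infty}(T_n\setminus K)<\epsilon$, not $\overline{T_n}$. On such a $K$, the paper's lemma that $\hauspdinf{K}(F,\overline{F})=0$ for compact $K$, applied on $\phi_n(K)\subset X_n$, is all you need; Lemma~\ref{lem:dInftyUniformlyContinuous} plays no role.

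\textbf{The quotient.} Since $G\subset\lip1(X_\infty)=F_{X_\infty}$, Proposition~\ref{thm:QuotientSpaceExist} applies directly. It gives a genuine quotient domination $q$ with $F_X\circ q=G$ exactly, so neither the extended quotient nor an ``up to closure'' check is needed.

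\textbf{Density.} The density of $\bigcup_N T_N$ is automatic, because it contains $X'_\infty$, which is dense in its completion $X_\infty$.
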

\begin{proof}
    Take any Cauchy sequence $\{X_n\}_{n=1}^\infty$ of geometric data sets. There exists a subsequence $\{X_{m(n)}\}_{n=1}^\infty$ such that $\Box(X_{m(n)},x_{m(n+1)}) < 2^{-n}$. By Lemma~\ref{lem:GreaterBox}, there exists a geometric data set $Y$ and a domination $\phi_n\colon Y \to X_n$, $n=1,2,\ldots$, such that\begin{equation*}
        \hausp{\kf^Y}(F_{X_{m(n)}}\circ\phi_n,F_{X_{m(n+1)}}\circ\phi_{n+1}) < 2^{-n}.
    \end{equation*} Since $\{F_{X_{m(n)}}\circ\phi_n\}_{n=1}^\infty$ is a Cauchy sequence in $(\F(\lip1(Y)),\hausp{\kf^Y})$, there exists a closed subset $F_\infty\in\F(\lip1(Y))$ such that $F_{X_{m(n)}}\circ\phi_n$ converges to $F_\infty$ as $n\to\infty$. We set $X\coloneqq Y/F_\infty$ and $\phi$ to be the quotient domination of it.

    Let us prove that $X_n$ converges $X$ as $n\to\infty$. Take any real number $\epsilon > 0$. From the inner regularity of $\mu_Y$, there exists a compact set $K\subset Y$ such that $\mu_Y(K) > 1-\epsilon$. By Lemma~\ref{lem:dInftyUniformlyContinuous}, for all sufficiently large natural number $n$,\begin{equation*}
        \hauspdinf{K}(F_{X_n}\circ\phi_n,F_{X_{n+1}}\circ\phi_{n+1}) < \frac{\epsilon}{2}.
    \end{equation*} From Lemma~\ref{lem:embeddedBox}, we have\begin{equation*}
        \Box(X_n,X) \leq \max\left\{1-\mu_Y(K),\ 2\hauspdinf{K}(F_{X_n}\circ\phi_n,F_X\circ\phi)\right\} < \epsilon.
    \end{equation*} This completes the proof.
\end{proof}

\bibliographystyle{abbrv}
\bibliography{all}

\end{document}